\newtheorem{thm}{Theorem}[section]
\newtheorem{lem}{Lemma}[section]
\newtheorem{prop}{Proposition}[section]
\newtheorem{rem}{Remark}[section]
\begin{document}
\numberwithin{equation}{section}

 \title[On the $3$-representations of groups and the $2$-categorical Traces]{
On the $3$-representations of groups and the $2$-categorical Traces}
\author {Wei Wang}
\begin{abstract}  To $2$-categorify  the theory of group representations, we introduce the notions of
the $3$-representation  of a group  in  a strict $3$-category and the strict $2$-categorical action  of  a group  on  a strict   $2$-category. We also $2$-categorify the concept of the trace by introducing  the  $2$-categorical trace of a  $1$-endomorphism in a strict  $3$-category. For a
 $3$-representation $\rho$ of a group $G$ and an element $f$ of $G$, the $2$-categorical trace $\mathbb{T}r_2
\rho_f $ is a category. Moreover, the centralizer  of   $f$ in $G$ acts categorically  on  this $2$-categorical trace. We construct the induced strict $2$-categorical action of a finite group, and show that the $2$-categorical trace $ \mathbb{T}r_2$ takes an induced strict $2$-categorical action into an induced
categorical action of the initia groupoid.
As a corollary, we get the $3$-character formula of  the induced strict $2$-categorical action.
\end{abstract}

\thanks{AMS 2010 Subject Classification:  18D05; 18D99; 20J99; 20C99}
\keywords{the $3$-representation of a group  in a  $3$-category;  the  $2$-categorical trace; the $3$-cocycle condition; the induced strict $2$-categorical actions; the $3$-character; $2$-categorification.}
\thanks{Supported by National Nature Science
Foundation
  in China (No. 11171298)}

\thanks{Department of Mathematics,
Zhejiang University, Zhejiang 310027,
 P. R. China, Email:   wwang@zju.edu.cn}

 \maketitle

 \tableofcontents
\section{Introduction}

The notion of a
group acting on a category goes back to Grothendieck's Tohoku paper \cite{Gro}. Recently
Ganter, Kapranov \cite{GK} and Bartlett \cite{B-th}  categorified the concept of the trace  of a linear transformation by introducing the notion of the category trace.
This is a set associated to any endofunctor on a small category, and is a
 vector space in the linear case. Moreover, a functor commuting with the endofunctor   defines a linear transformation on this vector space, whose ordinary trace defines a joint trace. This allowed these authors to define $2$-characters. When a group acts on a   $k$-linear category, the joint trace of a commuting pair of group elements is the $2$-character
  of the
categorical action. This is an analogue of the character of the representation of a group on a vector space  and is  a $2$-class function. In general, an $n$-class function is a function
defined on $ n$-tuples of commuting elements of a group and invariant under simultaneous conjugation. Such functions  already appear  in equivariant Morava $E$-theory \cite{HKR}.
   The theory of $2$-representations was developed further in \cite{Bar}
  \cite{E1}   \cite{E2}  \cite{E3} \cite{GU} \cite{O} \cite{Wi} etc..

During the past two decades  an active direction of research has been the  categorification of    some   algebraic, geometric or analytic concepts. For example, $2$-vector spaces,  $2$-bundles (gerbes), $2$-connections  and $2$-curvatures.  All involve $2$-categorical constructions and have various  applications, such as a geometric definition of elliptic cohomology \cite{BBK},   $2$-gauge theory \cite{BH11}  \cite{BS}   and the $2$-dimensional Langlands correspondence \cite{K} \cite{Osi}. It is   believed that   higher categorification is necessary for many geometric  and physical applications. $3$-categorical constructions  already appear  in the theory of
$2$-gerbes ($3$-bundles) \cite{Br94}   \cite{Br10}   and   in  $3$-gauge theory \cite{MP11} \cite{SW13} \cite{Wa}, which involves more general $\mathbf{Gray}$-categories.
The purpose of this paper is to $2$-categorify  the theory of group representations and characters by introducing the notions of the $3$-representation  of   a group  in a  $3$-category, the strict  $2$-categorical action  of a  group  on  a   $2$-category and the $2$-categorical trace. The problem of  investigating representations of   groups in  higher categories has already been mentioned  in \cite{GK}.

A geometric motivation for considering  higher representations of   groups
is as  follows.  Suppose that $G$ is a Lie group and that $H$ is a Lie subgroup. Let $V$ be a finite dimensional representation of $H$. We can construct a homogeneous vector bundle $G\times_H V$ over the  homogeneous space $G/H$ as $G\times  V$  modulo the equivalent
relation
$$
   (g,v)\sim (gh,h^{-1}.v) \quad {\rm for}\quad g\in G, h\in H, v\in V.
$$
The space of sections of this bundle is exactly the space ${\rm Ind}_H^GV$ of the induced representation. When $V$ is a $2$- or $3$-representation of $H$,  a similar construction will give us a homogeneous  $2$- or $3$-bundle over the  homogeneous space $G/H$. This will provide us good examples of higher bundles in higher differential geometry and higher gauge theory. But for a higher representation $\pi$ of the Lie group $H$, the functors $\pi(h)$ usually depend on $h\in H$ ``discontinuously". Thus
it is not easy to describe the space of ``sections" of the resulting higher bundles. However, when $G$ and $H$ are finite, $G/H$ is discrete, and so we have a clear picture. This is why we only consider $3$-representations of a finite group in this paper.

For simplicity, we only consider strict $2$- and $3$-categories.
  A $3$-representation of a group  $G$ in a $3$-category is given by a $1$-isomorphism  for each element of $G$, a $2$-isomorphism for each pair of elements of $G$, and a $3$-isomorphism for each triple of elements of $G$. These $3$-isomorphisms  must satisfy  the $3$-cocycle condition. This condition has a   simple  geometric interpretation: the composition of $3$-isomorphisms corresponding to $5$ tetrahedrons   in the boundary of a $4$-simplex is equal to the identity $3$-arrow.
  Given a $2$-category $\mathcal{V}$,  a  {\it strict $2$-categorical action of $G$ on $\mathcal{V}$} is given by an endofunctor   of $\mathcal{V}$ for   each element of  $G$, a pseudonatural transformation   between functors   for each pair of elements of $G$, and a  modification for each triple  of elements of $G$.
 Details are given in Section 2.3-2.4.

Recall that given a $2$-representation $\varrho$ of a finite group $G$ in a $2$-category $\mathcal{V}$ and an element  $f$ of $G$, we have  a $1$-isomorphism £¤$$
   \varrho_f:x\rightarrow x ,
$$
where $x$ is an object of $\mathcal{V}$ that $G$ acts on.
In \cite{B-th} \cite{GK}, the authors introduced the notion of  the categorical trace $\mathbb{T}r \varrho_f $. This    is the set of $2$-arrows in $\mathcal{V}$,
whose $1$-source is the unit arrow $1_x$ and whose $1$-target is $\varrho_f$. The centralizer of $f$ in $G$ acts on this set naturally. In our case, given a $3$-representation $\rho$  of $G$ in a $3$-category $\mathcal{C}$ and an element $f$ of $ G$, we have a  $1$-isomorphism $\rho_f:x\rightarrow x$  in $\mathcal{C}$. The
$2$-categorical trace
 $ {\rm \mathbb{T}r}_2\rho_f
$
is a category. Its objects are $2$-arrows with $1$-source   the unit arrow $1_x$ and $1$-target $\rho_f$,  and its morphisms are $3$-isomorphisms between such $2$-isomorphisms:
  $$
      \xy 0;/r.22pc/:
 (-15,0)*+{ {x}}="1";
(15,0)*+{ {x} }="2";
 {\ar@/^1.33pc/ "1";"2"^{1_x}};
{\ar@/_1.33pc/ "1";"2"_{\rho_f}};
 {\ar@{=>}^{\chi} (0,4)*{}; (0,-4)*{}};
         \endxy, \qquad\qquad
          \xy 0;/r.22pc/:
 (-15,0)*+{ {x}}="1";
(15,0)*+{ {x} }="2";
 {\ar@/^1.33pc/ "1";"2"^{1_x}};
{\ar@/_1.33pc/ "1";"2"_{\rho_f}};
                  (0,4)*+{}="A";
(0,-4)*+{}="B";
{\ar@{=>}@/_.5pc/_\chi "A"+(-2.33,0) ; "B"+(-1.66,-.55)};
{\ar@{=}@/_.5pc/ "A"+(-2.33,0) ; "B"+(-2.33,0)};
{\ar@{=>}@/^.5pc/^{\chi'} "A"+(2.33,0) ; "B"+(1.66,-.55)};
{\ar@{=}@/^.5pc/ "A"+(2.33,0) ; "B"+(2.33,0)};
{\ar@3{->} (-2,0)*{}; (3,0)*{}};\endxy.
          $$
   Moreover, the centralizer of $f$ in $G$, denoted by $C_G(f)$, acts categorically on  the $2$-categorical trace ${\rm \mathbb{T}r}_2\rho_f$ in the following sense.
We can define an  invertible functor  $\psi_g$ acting on  $\mathbb{T}r_2\rho_f $ for each $  g \in C_G(f)$,  and for any $h, g \in C_G(f)$, define a
natural isomorphism
 $$
    \Gamma_{h,g}:  \psi_h  \circ \psi_g\longrightarrow\psi_{hg}$$
  between such functors on the category   $\mathbb{T}r_2\rho_f$. This construction is given in Section 3.  To prove  the  action to be categorical, we have to show the associativity in the
definition of categorical action, i.e.,
 \begin{equation}\label{eq:cat-tr-0}\Gamma_{k,h g} {\#} (\psi_k \circ\Gamma_{h,g})= \Gamma_{kh , g} {\#} (  \Gamma_{k, h  } \circ\psi_g ):\psi_k \circ\psi_h \circ\psi_g\longrightarrow\psi_{khg},
\end{equation}
for any $k,h,g\in C_G(f)$, where $ {\#} $ is the composition of natural transformations  between  functors on the category ${\rm \mathbb{T}r}_2\rho_f$. This is the most difficult and technical part of this paper. By applying the $3$-cocycle identity (\ref{eq:4-cocycle})  repeatedly, we prove  in Section  6 that
$$
   \left\{\psi_g,\Gamma_{h,g}\right\}_{g,h\in C_G(f)}
$$
is a categorical action  of    the centralizer $C_G(f)$ on the category ${\rm \mathbb{T}r}_2 \rho_f$.

An easy and interesting example of $3$-representations is the $1$-dimensional one, which is  given by a $3$-cocycle  on a finite group $G$. A {\it $3$-cocycle}
is a function $c: G\times G\times G\longrightarrow k^*$ such that
\begin{equation}\label{eq:3-cocycle-0}
  c(g_3,g_2,g_1) c(g_4, g_3 g_2,g_1)c(g_4, g_3 ,g_2 )=c(g_4, g_3, g_2g_1)c(g_4  g_3, g_2,g_1)
\end{equation} for any $g_4,\ldots, g_1\in G$. Here $k$ is  a field of characteristic $0$. Such a $3$-cocycle gives us a strict action of $G$ on a $2$-category with only one object,  one $1$-arrow and the set of $2$-arrows isomorphic to $ k^*$. For an element  $f$ of $G$, its $2$-categorical trace ${\rm \mathbb{T}r}_2\rho_f$ is a  category with only one object  and the set of $1$-arrows isomorphic to $ k^*$. For any $h$ and $ g$ in  the centralizer  $C_G(f)$, we can construct  an element $\Gamma_{h,g}$ (\ref{eq:2-cocycle}) from
the $3$-cocycle $c$ in (\ref{eq:3-cocycle-0}) such that $\Gamma_{*,*}$ is a $2$-cocycle on the centralizer. This  can be proved quite easily and elementarily  by using  the condition (\ref{eq:3-cocycle-0}) for $3$-cocycles repeatedly in Section 6.1. This corresponds step by step to the proof of the general case carried out  in Section 6.4. It can be viewed as a simple model of the proof of (\ref{eq:cat-tr-0}). The difficulty in the general case is that we have to handle diagrams, while in the $1$-dimensional case we only need to handle element of the field $k$.

Suppose that $\mathcal{C}$ is a $ {k}$-linear $3$-category. Then $\mathbb{ {T}}r_2  \rho_f $ is also a $ {k}$-linear category.
 If $k,g$ and $f $ are pairwise commutative, then
  $  \psi_k  $  and  $  \psi_g  $ are $k$-linear endofunctors acting on $\mathbb{ {T}}r_2  \rho_f $.
We define the $3$-character of a $3$-representation $\rho$  to be
 $$
    \chi_\rho( f,g , k ):=  \hskip 2mm{\rm the}\hskip 2mm{\rm
joint}\hskip 2mm{\rm trace}\hskip 2mm{\rm of}\hskip 2mm{\rm functors}\hskip 2mm   \psi_k  \hskip 2mm{\rm and}\hskip 2mm   \psi_g \hskip 2mm{\rm on}\hskip 2mm   {\rm \mathbb{T}r}_2\rho_f .
 $$
It is
the  trace of the linear transformation induced by the functor $  \psi_k  $   on  the  $ {k}$-vector space  $ \mathbb{{T}}r  \psi_g $.

Suppose that   a subgroup $H $ of a finite group $G$ acts strictly $2$-categorically on a $2$-category $\mathcal{V}$. In Section 4, we define the induced   $2$-category
${\rm Ind}_H^G(\mathcal{V})$ and strict $2$-categorical  action of $G$ on it. In Section 5, we calculate the $2$-categorical trace of the induced strict $2$-categorical  action as
\begin{equation}\label{eq:3-trace}
   \mathbb{T}r_2({\rm Ind}_H^G\rho )={\rm Ind}_{\Lambda(H)}^{\Lambda(G)} \mathbb{T}r_2(\rho ),
\end{equation}
where $ \Lambda(H)  $ and $\Lambda(G) $ are initia groupoids associated to groups $H$ and $G$, respectively. As a corollary, we derive the $3$-character  of the induced strict $2$-categorical action, which coincides with the formula in \cite{HKR} for $n$-characters when $n=3$. These results are the generalization of induced   categorical  action and the $2$-character formula in  \cite{GK}.

It would be interesting to investigate the $m$-representation of a group in an $m$-category, the $m$-cocycle condition  and  $(m-1)$-categorical trace for a positive integer $m >3$.

I would like to thank the  anonymous referee for his/her many inspiring and  valuable suggestions.
\section{The $3$-representations of groups}
\subsection{Strict  $2$-categories}
 A {\it $2$-category} is a category enriched over the category of all small categories. In particular,
a strict $2$-category  $  \mathcal{ C }$    consists of collections $ \mathcal C_0$ of objects, $ \mathcal C_1$ of arrows  and $ \mathcal C_2$ of $2$-arrows,
together with

$\bullet$ functions $s_n, t_n :  \mathcal C_i\rightarrow  \mathcal C_n$ for all $0 \leq n < i \leq 2$,  called
{\it $n$-source} and {\it $n$-target},

$\bullet$ functions $\#_n :  \mathcal C_{n+1} \times \mathcal C_{n+1}\rightarrow  \mathcal C_{n+1}$ for all $n=0,1$, called {\it vertical composition},

$\bullet$ a function  $\#_0 :  \mathcal C_2  \times \mathcal C_{2}\rightarrow  \mathcal C_2$, called the {\it  horizontal composition},

$\bullet$ a function  $ 1_{*}   :  \mathcal C_i \rightarrow  \mathcal C_{i+1 }$ for $i=0,1$, called the {\it identity}.

For a $1$-arrow $     \xy0;/r.22pc/:
(-8,0)*+{x}="4";
(8,0)*+{y}="6";
{\ar@{->} "4";"6"^{A}};
\endxy  $,  its $0$-source  and   $0$-target are $x$ and $y$, respectively.
For a  $2$-arrow $
      \xy0;/r.22pc/:
(-10,0)*+{  x }="1";
(10,0)*+{    y}="2";
  {\ar@{=>}^{ \scriptscriptstyle \varphi  } (0, 4)*{};(0,-4)*{}} ;
 {\ar@/^1.35pc/ "1";"2"^{ \scriptscriptstyle  A } };
  {\ar@/_1.35pc/ "1";"2"_{ \scriptscriptstyle B } };
\endxy$ in $\mathcal C_2$, its $1$-source  and   $1$-target are $     \xy0;/r.22pc/:
(-8,0)*+{x}="4";
(8,0)*+{y}="6";
{\ar@{->} "4";"6"^{A}};
\endxy  $ and $     \xy0;/r.22pc/:
(-8,0)*+{x}="4";
(8,0)*+{y}="6";
{\ar@{->} "4";"6"^{B}};
\endxy  $, respectively, while its $0$-source  and   $0$-target are $x$ and $y$, respectively.

Two  $1$-arrows $A$ and $A'$ are called {\it $0$-composable} if  the   $0$-target of $A$ coincides with the $0$-source  of $A'$. In this case, their vertical composition
is $ A\#_0 A':    \xy0;/r.22pc/:
(-8,0)*+{x}="4";
(8,0)*+{y}="6";(24,0)*+{z}="8";
{\ar@{->} "4";"6"^{A}};
 {\ar@{->} "6";"8"^{A'}};
\endxy  $.
Two  $2$-arrows $\phi$ and $\psi$ are called {\it  $1$-composable} if the   $1$-target of $\phi$ coincide with the $1$-source  of $\psi$. In this case, their vertical composition
$
  \phi\#_1 \psi
$ is
$$
   \xy 0;/r.22pc/:
   (-12,0)*+{x}="4";
(12,0)*+{y}="6";
{\ar@{->}|-{B} "4";"6"};
{\ar@/^1.85pc/^{A} "4";"6"};
{\ar@/_1.85pc/_{C} "4";"6"};
{\ar@{=>}^{\phi} (0,7)*{};(0,1.5)*{}} ;
{\ar@{=>}^{\psi} (0,-1.5)*{};(0,-7)*{}} ;
\endxy,
$$
where $A=s_1(\phi)$, $B=t_1(\phi)=s_1(\psi)$, $C=t_1(\psi)$, $x=s_0(\phi)=s_0(\psi)$, $y=t_0(\phi)=t_0(\psi)$. In general, two arrows are composable if the target matching condition is satisfied.

Two  $2$-arrows $\phi$ and $\psi$ are called {\it horizontally composable}¡¡ ({\it $0$-composable})¡¡ if the   $0$-target of $\phi$ coincides with the $0$-source  of $\psi$. In this case, their horizontal composition
$
  \phi\#_0 \psi
$ is
$$
     \xy0;/r.22pc/:
(-12,0)*+{x}="4";
(12,0)*+{y}="6"; (36,0)*+{z}="8";
{\ar@/^1.55pc/^{A} "4";"6"};
{\ar@/_1.55pc/_{C} "4";"6"};
{\ar@{=>}^{\phi} (0,5)*{};(0,-5 )*{}} ; {\ar@/^1.55pc/^{B} "6";"8"};
{\ar@/_1.55pc/_{D} "6";"8"};
{\ar@{=>}^{\psi} (24,5)*{};(24,-5 )*{}} ;
\endxy  .
  $$
   In particular, when $\phi=1_A$  we call $1_A\#_{0}\psi$    {\it     whiskering from left by $1$-arrow $A$}, and denote it by
$$
  A\#_{0}\psi:\qquad \xy0;/r.22pc/:
(-10,0)*+{ y }="1";(-30,0)*+{ x }="0";
(10,0)*+{z  }="2";
  {\ar@{=>}^{ \psi } (0, 4)*{};(0,-4)*{}} ;
 {\ar@/^1.35pc/^{ B  } "1";"2" };
  {\ar@/_1.35pc/_{ D  } "1";"2" };{\ar@{->}^{A   } "0";"1" };
\endxy,
$$
  Similarly,   we define     {\it    whiskering from  right  by a $1$-arrow}.

The identities satisfy
\begin{equation}\label{eq:identities}   \begin{array}{l}
   1_x\#_0 A=A= A\#_0 1_y  ,\quad\qquad {\rm for}\quad {\rm any}\quad 1-{\rm arrow}\quad A:x\longrightarrow y;\\
   1_A\#_1 \phi=\phi= \phi\#_1 1_B  ,\qquad\quad  {\rm for}\quad {\rm any}\quad   2-{\rm arrow}\quad \phi:A \Longrightarrow B.
  \end{array}\end{equation}

The composition $\#_p$ satisfies the {\it associativity}
\begin{equation}\label{eq:composition-associativity}
   (\phi\#_p \psi)\#_p\omega=  \phi\#_p (\psi \#_p\omega),
\end{equation}
 if the corresponding arrows are $p$-composable, for $p=0$ or $1$.

 The horizontal composition satisfies the {\it interchange  law}:
 \begin{equation}\label{eq:interchanging-law}
    (A\#_0\psi)\#_1(\phi\#_0D)=\phi \#_0\psi=(\phi\#_0 B)\#_1(C\#_0\psi ).
 \end{equation}
 Namely,
$$
     \xy0;/r.22pc/:
(-12,0)*+{x}="4";
(12,0)*+{y}="6"; (36,0)*+{z}="8";
{\ar@/^1.55pc/ "4";"6"^{A}};
 {\ar@/^1.55pc/^{B} "6";"8"};
{\ar@/_1.55pc/_{D} "6";"8"};
{\ar@{=>}^{\psi} (24,6)*{};(24,-6 )*{}} ;
(-12,-10)*+{x}="04";
(12,-10)*+{y}="06"; (36,-10)*+{z}="08";
{\ar@/^1.55pc/^{A} "04";"06"};
{\ar@/_1.55pc/_{C} "04";"06"};
{\ar@{=>}^{\phi} (0,-4)*{};(0,-16 )*{}} ;
{\ar@/_1.55pc/ "06";"08"_{D}};
\endxy \xy0;/r.22pc/:
  (0,-7)*+{ }="1";
(20,-7)*+{ }="2";
{\ar@{=}^{ }   "1" ;"2" };
   \endxy \xy0;/r.22pc/:
(-12,0)*+{x}="4";
(12,0)*+{y}="6"; (36,0)*+{z}="8";
{\ar@/^1.55pc/^{A} "4";"6"};
{\ar@/_1.55pc/_{C} "4";"6"};
{\ar@{=>}^{\phi} (0,6)*{};(0,-6 )*{}} ; {\ar@/^1.55pc/ "6";"8"^{B}};
(-12,-10)*+{x}="14";
(12,-10)*+{y}="16"; (36,-10)*+{z}="18";
{\ar@/_1.55pc/ "14";"16"_{C}};
{\ar@/^1.55pc/^{B} "16";"18"};
{\ar@/_1.55pc/_{D} "16";"18"};
{\ar@{=>}^{\psi} (24,-4)*{};(24,-16 )*{}} ;
\endxy
 $$
the vertical composition of left two $2$-arrows coincides with the vertical composition of right two $2$-arrows. They are both equal to the horizontal composition $\phi \#_0\psi$.  The interchange  law allows us  to change the order of compositions of $2$-arrows, up to whiskerings. This is essentially the paste theorem for $2$-categories  (cf. \S 2.13 in \cite{KV}).

The   interchange  law (\ref{eq:interchanging-law}) is a special case of the following more general {\it compatibility condition} for different compositions. If $(\beta,\beta'),(\gamma ,\gamma' ) \in \mathcal{C}_{k }\times \mathcal{C}_{k } $ are  $p$-composable and $(\beta ,  \gamma),(\beta' ,\gamma' )\in
\mathcal{C}_{k }\times  \mathcal{C}_{k } $ are $q$-composable, $p,q=0,1$,
then we have
\begin{equation}\label{eq:asso2}
   (\beta\#_{p }\beta') \#_{q} (\gamma \#_{p }\gamma' )=  (\beta \#_{q}  \gamma)\#_{p }(\beta'  \#_{q}\gamma' ).
\end{equation}
 The the left-hand side of the interchange  law (\ref{eq:interchanging-law}) is exactly the compatibility condition (\ref{eq:asso2}) with
$
    p=0 ,   q=1 ,   \beta=1_A ,  \beta' =\psi ,    \gamma=\phi ,    \gamma'=1_D ,
$
by using the property (\ref{eq:identities}) of identities. (\ref{eq:identities}) (\ref{eq:composition-associativity}) and (\ref{eq:asso2}) are the main axioms that a strict  $2$-category satisfies.

A $1$-arrow
 $A: x \rightarrow y $ is
called  {\it   invertible} or a {\it   $1$-isomorphism}, if there exists another $1$-arrow
 $B
 : y \rightarrow x$ such that $1_x =A\#_0B $
 and
$  B\#_0A
 = 1_y$. A strict  $2$-category in which every $1$-arrow is invertible is called a {\it strict $2$-groupoid}.
A $2$-arrow $\varphi:
A\Rightarrow B$ is called {\it invertible} or a {\it$2$-isomorphism} if there exists another
$2$-arrow $\psi :
B\Rightarrow A$
  such that $\psi\#_1\varphi = 1_B$
  and $\varphi\#_1 \psi = 1_A$.
$\psi$ is uniquely determined and called the {\it inverse of $\varphi$}.

Let $\mathcal{S}$ and $\mathcal{T}$ be two strict $2$-categories. A {\it (strict) $2$-functor} $F : \mathcal{S }\rightarrow \mathcal{T}$
is an assignment
of a $2$-arrow
$$
   \xy  0;/r.30pc/:
(-10,0)*+{ \scriptscriptstyle F (X)}="1";
(10,0)*+{ \scriptscriptstyle F (Y)}="2";
  {\ar@{=>}^{ \scriptscriptstyle F(\varphi) } (0, 5)*{};(0,-5)*{}} ;
 {\ar@/^1.95pc/^{ \scriptscriptstyle F(f)} "1";"2" };
  {\ar@/_1.95pc/_{ \scriptscriptstyle F(g)} "1";"2" };
\endxy
$$
 to each $2$-arrow $
      \xy0;/r.22pc/:
(-10,0)*+{ \scriptscriptstyle  X }="1";
(10,0)*+{ \scriptscriptstyle  Y }="2";
  {\ar@{=>}^{ \scriptscriptstyle \varphi  } (0, 5)*{};(0,-5)*{}} ;
 {\ar@/^1.35pc/^{ \scriptscriptstyle  f } "1";"2" };
  {\ar@/_1.35pc/_{ \scriptscriptstyle  g } "1";"2" };
\endxy$
such that $F$ preserves compositions $\#_p$ and identities. More explicitly, we have

$\bullet$
$F(\varphi\#_1\psi  ) = F(\varphi ) \#_1 F(\psi)$ and $F(1_f ) = 1_{F(f)}$
for all composable $2$-arrows $\varphi$ and  $\psi$  and any $0$- or $1$-arrow $ f$;

$\bullet$
$F(g) \#_0 F(f) = F(g \#_0 f)$
for all composable $1$-arrows $g $ and $f$, and
$F(\varphi ) \#_0 F(\psi) = F( \varphi \#_0\psi)$
for all horizontally composable $2$-arrows $\varphi$ and  $\psi$.

  Let $F_1$ and $F_2$ be two  $2$-functors   from $\mathcal{S}$ to $\mathcal{T}$. A {\it pseudonatural
transformation} $\rho: F_1 \rightarrow F_2$ is an assignment of a $1$-arrow $\rho(X)$ in $\mathcal{T}$ to each object $  X  $ in $\mathcal{S}$ and a $2$-isomorphism $\rho(f)$
\begin{equation}\label{eq:pseudo-trans0}
    \xy 0;/r.30pc/:
(-10,0)*+{ \scriptscriptstyle F_1(X)}="1";
(10,0)*+{ \scriptscriptstyle F_1(Y)}="2";
(-10,-20)*+{ \scriptscriptstyle F_2(X)}="3";
(10,-20)*+{ \scriptscriptstyle F_2(Y)}="4";
{\ar@{->}^{ \scriptscriptstyle F_1(f)} "1";"2" };
{\ar@{->}_{ \scriptscriptstyle F_2(f)} "3";"4" };
{\ar@{->}_{ \scriptscriptstyle \rho(X)} "1";"3" };
{\ar@{->}^{ \scriptscriptstyle \rho(Y)} "2";"4" };
{\ar@{=>}^{ \scriptscriptstyle \rho(f) } (7, -3)*{};(-7,-17)*{}} ;
\endxy
\end{equation} in $\mathcal{T}$ to each $1$-arrow $f : X \rightarrow Y$ in $\mathcal{S}$
such that they satisfy two
axioms

$\bullet$ The composition of $1$-arrows in $\mathcal{S}$:
\begin{equation}\label{eq:pseudo-trans1}
   \xy  0;/r.30pc/:
(-10,0)*+{ \scriptscriptstyle F_1(X)}="1";
(10,0)*+{ \scriptscriptstyle F_1(Y)}="2";
(-10,-20)*+{ \scriptscriptstyle F_2(X)}="3";
(10,-20)*+{ \scriptscriptstyle F_2(Y)}="4";
{\ar@{->}^{ \scriptscriptstyle F_1(f)} "1";"2" };
{\ar@{->}_{ \scriptscriptstyle F_2(f)} "3";"4" };
{\ar@{->}_{ \scriptscriptstyle \rho(X)} "1";"3" };
{\ar@{->}|-{ \scriptscriptstyle \rho(Y)} "2";"4" };
{\ar@{=>}^{ \scriptscriptstyle \rho(f) } (7, -3)*{};(-7,-17)*{}} ;
(30,0)*+{ \scriptscriptstyle F_1(Z)}="12";
(30,-20)*+{ \scriptscriptstyle F_2(Z)}="14";
{\ar@{->}^{ \scriptscriptstyle F_1(g)} "2";"12" };
{\ar@{->}_{ \scriptscriptstyle F_2(g)} "4";"14" };
{\ar@{->}^{ \scriptscriptstyle \rho(Z)} "12";"14" };
{\ar@{=>}^{ \scriptscriptstyle \rho(g) } (27, -3)*{};(13,-17)*{}} ;
\endxy
\xy 0;/r.30pc/:
  (0,0)*+{ }="1";
(10,0)*+{ }="2";
{\ar@3{=}^{ }   "1"+(0,-10);"2"+(0,-10)};
   \endxy
\xy  0;/r.30pc/:
(-10,0)*+{ \scriptscriptstyle F_1(X)}="1";
(10,0)*+{ \scriptscriptstyle F_1(Z)}="2";
(-10,-20)*+{ \scriptscriptstyle F_2(X)}="3";
(10,-20)*+{ \scriptscriptstyle F_2(Z)}="4";
{\ar@{->}^{ \scriptscriptstyle F_1(f \#_0 g)} "1";"2" };
{\ar@{->}_{ \scriptscriptstyle F_2(f\#_0 g)} "3";"4" };
{\ar@{->}_{ \scriptscriptstyle \rho(X)} "1";"3" };
{\ar@{->}^{ \scriptscriptstyle \rho(Z)} "2";"4" };
{\ar@{=>}^{ \scriptscriptstyle \rho(f\#_0 g) } (7, -3)*{};(-7,-17)*{}} ;
\endxy;
\end{equation}

$\bullet$ The compatibility with $2$-arrows:

\begin{equation}\label{eq:pseudo-trans2}
     \xy  0;/r.30pc/:
(-10,13)*+{ \scriptscriptstyle F_1(X)}="1";
(10,13)*+{ \scriptscriptstyle F_1(Y)}="2";
(-10,-7)*+{ \scriptscriptstyle F_2(X)}="3";
(10,-7)*+{ \scriptscriptstyle F_2(Y)}="4";
{\ar@{->}^{ \scriptscriptstyle F_1(f)} "1";"2" };
{\ar@{->}^{ \scriptscriptstyle F_2(f)} "3";"4" };
{\ar@{->}_{ \scriptscriptstyle \rho(X)} "1";"3" };
{\ar@{->}^{ \scriptscriptstyle \rho(Y)} "2";"4" };
{\ar@{=>}_{ \scriptscriptstyle \rho(f) } (7, 10)*{};(-7,-3)*{}} ;
  {\ar@/_3.3pc/_{ \scriptscriptstyle F_2(g)} "3";"4" };
  {\ar@{=>}^{ \scriptscriptstyle F_2(\varphi) } (0, -8)*{};(0,-17)*{}} ;
\endxy\xy 0;/r.30pc/:
  (0,0)*+{ }="1";
(10,0)*+{ }="2";
{\ar@3{=}^{ }   "1"+(0,0);"2"+(0,0)};
   \endxy  \xy  0;/r.30pc/:
(-10,3)*+{ \scriptscriptstyle F_1(X)}="1";
(10,3)*+{ \scriptscriptstyle F_1(Y)}="2";
(-10,-17)*+{ \scriptscriptstyle F_2(X)}="3";
(10,-17)*+{ \scriptscriptstyle F_2(Y)}="4";
{\ar@{->}_{ \scriptscriptstyle F_1(g)} "1";"2" };
{\ar@{->}_{ \scriptscriptstyle F_2(g)} "3";"4" };
{\ar@{->}_{ \scriptscriptstyle \rho(X)} "1";"3" };
{\ar@{->}^{ \scriptscriptstyle \rho(Y)} "2";"4" };
{\ar@{=>}^{ \scriptscriptstyle \rho(g) } (7, 0)*{};(-7,-14)*{}} ;
  {\ar@/^3.3pc/^{ \scriptscriptstyle F_1(f)} "1";"2" };
  {\ar@{=>}^{ \scriptscriptstyle F_1(\varphi) } (0, 13)*{};(0, 4)*{}} ;
\endxy
\end{equation}
for any $2$-arrow $\varphi:f\Rightarrow g$.

Let $F_1, F_2 : \mathcal{S} \rightarrow \mathcal{T}$ be two strict $2$-functors and let $ \rho_1,\rho_2 : F_1 \rightarrow F_2$
be pseudonatural transformations. A {\it  modification} $\Phi: \rho_1\xy 0;/r.30pc/:
  (0,0)*+{ }="1";
(6,0)*+{ }="2";
{\ar@3{->}    "1"+(0, 0);"2"+(0, 0)};
   \endxy \rho_2$ is an assignment
of a $2$-arrow
$$
   \xy  0;/r.30pc/:
(-10,0)*+{ \scriptscriptstyle F_1 (X)}="1";
(10,0)*+{ \scriptscriptstyle F_2 (X)}="2";
  {\ar@{=>}^{ \scriptscriptstyle  \Phi(X) } (0, 4)*{};(0,-4)*{}} ;
 {\ar@/^1.35pc/^{ \scriptscriptstyle \rho_1(X)} "1";"2" };
  {\ar@/_1.35pc/_{ \scriptscriptstyle \rho_2(X)} "1";"2" };
\endxy
$$ in $\mathcal{T} $ to any object $ X $ in $\mathcal{S}$,
which satisfies
\begin{equation}\label{eq:modification}
     \xy  0;/r.30pc/:
(-10,0)*+{ \scriptscriptstyle F_1(X)}="1";
(10,0)*+{ \scriptscriptstyle F_1(Y)}="2";
(-10,-20)*+{ \scriptscriptstyle F_2(X)}="3";
(10,-20)*+{ \scriptscriptstyle F_2(Y)}="4";
{\ar@{->}^{ \scriptscriptstyle F_1(f)} "1";"2" };
{\ar@{->}_{ \scriptscriptstyle F_2(f)} "3";"4" };
{\ar@{->}|-{ \scriptscriptstyle \rho_1(X)} "1";"3" };
{\ar@{->}^{ \scriptscriptstyle \rho_1(Y)} "2";"4" };
{\ar@{=>}^{ \scriptscriptstyle \rho_1(f) } (7, -3)*{};(-7,-17)*{}} ;
  {\ar@/_3.85pc/_{ \scriptscriptstyle \rho_2(X)} "1";"3" };
  {\ar@{<=}^{ \scriptscriptstyle \Phi(X) } (-22, -10)*{};(-14,-10)*{}} ;
\endxy\xy 0;/r.30pc/:
  (0,0)*+{ }="1";
(10,0)*+{ }="2";
{\ar@3{=}^{ }   "1"+(0,-10);"2"+(0,-10)};
   \endxy  \xy  0;/r.30pc/:
(-10,0)*+{ \scriptscriptstyle F_1(X)}="1";
(10,0)*+{ \scriptscriptstyle F_1(Y)}="2";
(-10,-20)*+{ \scriptscriptstyle F_2(X)}="3";
(10,-20)*+{ \scriptscriptstyle F_2(Y)}="4";
{\ar@{->}^{ \scriptscriptstyle F_1(f)} "1";"2" };
{\ar@{->}_{ \scriptscriptstyle F_2(f)} "3";"4" };
{\ar@{->}_{ \scriptscriptstyle \rho_2(X)} "1";"3" };
{\ar@{->}|-{ \scriptscriptstyle \rho_2(Y)} "2";"4" };
{\ar@{=>}^{ \scriptscriptstyle \rho_2(f) } (7, -3)*{};(-7,-17)*{}} ;
  {\ar@/^3.85pc/^{ \scriptscriptstyle \rho_1(Y)} "2";"4" };
  {\ar@{<=}^{ \scriptscriptstyle \Phi(Y) } (14, -10)*{};(22,-10)*{}} ;
\endxy.
\end{equation}

\subsection{Strict $3$-categories}
A  {\it $3$-category} is a category enriched over the category of all small strict $2$-categories. In particular,
a {\it strict $3$-category } $  \mathcal{ C }$    consists of collections $ \mathcal C_0$ of objects, $ \mathcal C_1$ of $1$-arrows, $ \mathcal C_2$ of $2$-arrows,  and $ \mathcal C_3$ of $3$-arrows,
together with

$\bullet$ functions $s_n, t_n :  \mathcal C_i\rightarrow  \mathcal C_n$ for all $0 \leq n < i \leq 3$,  called
{\it $n$-source} and {\it $n$-target},

$\bullet$ functions $\#_n :  \mathcal C_{n+1} \times \mathcal C_{n+1}\rightarrow  \mathcal C_{n+1}$ for all $n=0,1,2$, called {\it vertical composition},

$\bullet$ a function  $\#_p :  \mathcal C_i  \times \mathcal C_{i}\rightarrow  \mathcal C_i$, $p+2\leq i $,  called the {\it  horizontal composition},

$\bullet$ a function  $ 1_{*}   :  \mathcal C_i \rightarrow  \mathcal C_{i+1 }$ for $i=0,1$, called {\it identity}.

For a $3$-arrow $\varphi: \xy 0;/r.22pc/:
(0,10)*{};
(0,-10)*{};
(-20,0)*+{x}="1";
(0,0)*+{y}="2";
{\ar@/^1.33pc/^f "1";"2"};
{\ar@/_1.33pc/_{f'} "1";"2"};
(-10,4)*+{}="A";
(-10,-4)*+{}="B";
{\ar@{=>}@/_.5pc/_\gamma "A"+(-1.33,0) ; "B"+(-.66,-.55)};
{\ar@{=}@/_.5pc/ "A"+(-1.33,0) ; "B"+(-1.33,0)};
{\ar@{=>}@/^.5pc/^{\gamma'} "A"+(1.33,0) ; "B"+(.66,-.55)};
{\ar@{=}@/^.5pc/ "A"+(1.33,0) ; "B"+(1.33,0)};
{\ar@3{->} (-12,0)*{}; (-7,0)*{}};
\endxy $,  its $2$-source  and   $2$-target are $\gamma$ and $\gamma'$ respectively. The $3$-arrows $\varphi$ and   $\varphi': \xy 0;/r.22pc/:
(0,10)*{};
(0,-10)*{};
(-20,0)*+{x}="1";
(0,0)*+{y}="2";
{\ar@/^1.33pc/^f "1";"2"};
{\ar@/_1.33pc/_{f'} "1";"2"};
(-10,4)*+{}="A";
(-10,-4)*+{}="B";
{\ar@{=>}@/_.5pc/_{\gamma'} "A"+(-1.33,0) ; "B"+(-.66,-.55)};
{\ar@{=}@/_.5pc/ "A"+(-1.33,0) ; "B"+(-1.33,0)};
{\ar@{=>}@/^.5pc/^{\gamma''} "A"+(1.33,0) ; "B"+(.66,-.55)};
{\ar@{=}@/^.5pc/ "A"+(1.33,0) ; "B"+(1.33,0)};
{\ar@3{->} (-12,0)*{}; (-7,0)*{}};
\endxy $   are $2$-composable, and their composition $\varphi\#_2\varphi'$ is $  \xy 0;/r.22pc/:
(0,10)*{};
(0,-10)*{};
(-20,0)*+{x}="1";
(0,0)*+{y}="2";
{\ar@/^1.33pc/^f "1";"2"};
{\ar@/_1.33pc/_{f'} "1";"2"};
(-10,4)*+{}="A";
(-10,-4)*+{}="B";
{\ar@{=>}@/_.5pc/_\gamma "A"+(-1.33,0) ; "B"+(-.66,-.55)};
{\ar@{=}@/_.5pc/ "A"+(-1.33,0) ; "B"+(-1.33,0)};
{\ar@{=>}@/^.5pc/^{\gamma''} "A"+(1.33,0) ; "B"+(.66,-.55)};
{\ar@{=}@/^.5pc/ "A"+(1.33,0) ; "B"+(1.33,0)};
{\ar@3{->} (-12,0)*{}; (-7,0)*{}};
\endxy $.

In a   strict $3$-category,
$0$-, $1$- and $2$-arrows behave as in a $2$-category.  We call two $3$-arrows $\varphi$ and $\psi$ {\it horizontally $p$-composable} if the   $p$-target of $\varphi$ coincides with the $p$-source  of $\psi$, $p=0,1$, and denote their horizontal composition as $\varphi\#_p\psi$.

For a $2$-arrow $\delta$,   $3$-arrows $1_\delta$  and $\varphi$   are {\it horizontally $1$-composable}  if    the   $1$-target of $\delta$ coincides with the $1$-source  of $\varphi$. In this case,
$$ \delta\#_1\varphi:=1_\delta\#_1\varphi \qquad\qquad
    \xy 0;/r.22pc/:
(0,10)*{};
(0,-10)*{};
(-20,0)*+{x}="1";
(0,0)*+{y}="2";
{\ar@/^1.33pc/ "1";"2"|-f};{\ar@/^3.63pc/^g "1";"2"};{\ar@{=>}^{\delta} (-10,16) ; (-10,8)};
{\ar@/_1.33pc/_{f'} "1";"2"};
(-10,4)*+{}="A";
(-10,-4)*+{}="B";
{\ar@{=>}@/_.5pc/_\gamma "A"+(-1.33,0) ; "B"+(-.66,-.55)};
{\ar@{=}@/_.5pc/ "A"+(-1.33,0) ; "B"+(-1.33,0)};
{\ar@{=>}@/^.5pc/^{\gamma'} "A"+(1.33,0) ; "B"+(.66,-.55)};
{\ar@{=}@/^.5pc/ "A"+(1.33,0) ; "B"+(1.33,0)};
{\ar@3{->} (-12,0)*{}; (-7,0)*{}};
\endxy,
$$
  is called {\it whiskering from above by a $2$-arrow $\delta$}. It is similar to define whiskering from below:
$$
    \xy 0;/r.22pc/:
(0,10)*{};
(0,-10)*{};
(-20,0)*+{x}="1";
(0,0)*+{y}="2";
{\ar@/^1.33pc/ "1";"2"^f};{\ar@/_3.63pc/_{g'} "1";"2"};{\ar@{=>}^{\eta} (-10,-8) ; (-10,-16)};
{\ar@/_1.33pc/"1";"2"|-{f'} };
(-10,4)*+{}="A";
(-10,-4)*+{}="B";
{\ar@{=>}@/_.5pc/_\gamma "A"+(-1.33,0) ; "B"+(-.66,-.55)};
{\ar@{=}@/_.5pc/ "A"+(-1.33,0) ; "B"+(-1.33,0)};
{\ar@{=>}@/^.5pc/^{\gamma'} "A"+(1.33,0) ; "B"+(.66,-.55)};
{\ar@{=}@/^.5pc/ "A"+(1.33,0) ; "B"+(1.33,0)};
{\ar@3{->} (-12,0)*{}; (-7,0)*{}};
\endxy
$$
There is also {\it whiskering from left ( or right) by a $1$-arrow} $A\#_0\varphi:=1_{1_A}\#_0\varphi$ (or $  \varphi\#_0 B$):
$$
   \xy 0;/r.22pc/:
(0,10)*{};
(0,-10)*{};
(-20,0)*+{x}="1";(-40,0)*+{z}="0";(20,0)*+{w}="10";
(0,0)*+{y}="2";
{\ar@/^1.33pc/^f "1";"2"};{\ar@{->}^{A}  "0";"1"};{\ar@{-->}^{B}  "2";"10"};
{\ar@/_1.33pc/_{f'} "1";"2"};
(-10,4)*+{}="A";
(-10,-4)*+{}="B";
{\ar@{=>}@/_.5pc/_\gamma "A"+(-1.33,0) ; "B"+(-.66,-.55)};
{\ar@{=}@/_.5pc/ "A"+(-1.33,0) ; "B"+(-1.33,0)};
{\ar@{=>}@/^.5pc/^{\gamma'} "A"+(1.33,0) ; "B"+(.66,-.55)};
{\ar@{=}@/^.5pc/ "A"+(1.33,0) ; "B"+(1.33,0)};
{\ar@3{->} (-12,0)*{}; (-7,0)*{}};
\endxy
$$

The properties of identities,  the associativity and  the compatibility condition for different compositions, similar to (\ref{eq:identities}) (\ref{eq:composition-associativity}) and  (\ref{eq:asso2}) for a strict $2$-category, also hold in a strict $3$-category.
See page 8 of \cite{Lei} for an explicit definition of   a strict $m$-category.

A {\it  strict  $3$-functor} (or a {\it    functor}) is a map     preserving compositions  and identities.

\begin{rem} In a strict $3$-category, the interchange  law (\ref{eq:interchanging-law}) for the horizontal composition of $2$-arrows is also satisfied.  But in general, a $3$-category does not satisfy the interchange  law.
   $\mathbf{Gray}$-categories are the greatest possible semi-strictification of   $3$-categories, and appear naturally in  $3$-gauge theory \cite{Wa}. The $3$-representation     in   a $\mathbf{Gray}$-category is more natural, but is much more complicated. So we restrict to the  $3$-representation   in   strict $3$-categories in this paper.
\end{rem}

In a  strict $3$-category $\mathcal{C}$, a $1$-arrow $B:x\rightarrow y$ is called {\it a  $1$-isomorphism} if there exists
 $1$-arrow $C:y\rightarrow x$ such that there exist   $2$-isomorphisms $u :1_y\Longrightarrow C\#_k B$ and  $v:1_x\Longrightarrow B\#_k C $. We  call $C$ a
 {\it quasi-inverse to $B$}, and vise versa. However, when $k=2$ or $3$,   we call   a $k$-arrow {\it a  $k$-isomorphism} if it is strictly invertible.

\subsection{The $3$-representations   of a group   in  a strict   $3$-category }

Let $\mathcal{C}$ be a strict $3$-category and let $G$ be a group. $G$ can be viewed as a strict $3$-category with only one object $\bullet$, $G$ as the set of $1$-arrows $g:\bullet\rightarrow \bullet$, the set of $2$-arrows
consisting of the identities of   $1$-arrows, and the set of $3$-arrows consisting of the identities of   $2$-arrows.
A {\it $3$-representation of a group $G$ in $\mathcal{C}$ } is a weak  functor $\rho$ from $G$ to
$\mathcal{C}$ in the following sense. We have

(1) an object $x$ of $\mathcal{C}$;

(2) for each $g\in G$, a $1$-isomorphism   $\rho_g :x\rightarrow x$;

(3) for each $h, g\in G$, a $2$-isomorphism
$
 \phi_{h ,g }:\rho_h \rho_g \Longrightarrow \rho_{h g}
$
 (here and in the following we write $\rho_h \#_0\rho_g$ as $\rho_h \rho_g$ for simplicity), corresponding to the $2$-cell
$$
     \xy0;/r.22pc/:
(-10,0)*+{ \bullet }="1";
(20,0)*+{\bullet }="2";
(10,-20)*+{ \bullet}="3";
{\ar@{->}^{\rho_{h g}  } "1";"2" };
{\ar@{->}_{\rho_h } "1";"3" };
{\ar@{<-}^{\rho_{ g}} "2";"3" };
{\ar@{<=}_{\phi_{h,g} } (8, -2)*{};(10,-15)*{}} ;
\endxy;
$$

(4) for each $g_3,g_2,g_1\in G$, a $3$-isomorphism, called the {\it associator},
\begin{equation}\label{eq:3-cocycle}
     \Phi_{ g_3,g_2,g_1}:(\rho_{g_3} \#_0\phi_{g_2,g_1} ) \#_1 \phi_{ g_3,g_2 g_1}\xy 0;/r.30pc/:
  (0,0)*+{ }="1";
(10,0)*+{ }="2";
{\ar@3{->}^{ }  "1" ;"2" };
   \endxy (\phi_{g_3,g_2}\#_0 \rho_{g_1})\#_1
   \phi_{g_3 g_2,g_1} ,
\end{equation} corresponding to the $3$-cell
$$
     \xy0;/r.22pc/:
(0,0 )*+{\bullet}="1";
(40, 0)*+{\bullet}="2";
( 55,25)*+{\bullet}="3";
(20,45)*+{\bullet}="4";
 {\ar@{->} "1";"2"_{\scriptscriptstyle\rho_{ g_3 }} };
{\ar@{-->}_{\scriptscriptstyle\rho_{ g_3g_2 }  }"1"; "3"};
{\ar@{->} "2";"3"_{\scriptscriptstyle\rho_{  g_2 } } };
{\ar@{->}^{\scriptscriptstyle \rho_{ g_3g_2g_1} } "1";"4" };
{\ar@{->}_{\scriptscriptstyle\rho_{  g_2g_1}  } "2";"4" };
{\ar@{->} "3";"4"_{\scriptscriptstyle\rho_{ g_1}} };
 \endxy.
$$
It can be viewed as  exchanging the diagonals of the quadrilateral:
\begin{equation}\label{eq:diagonal}
     \xy0;/r.22pc/:
(0,0 )*+{\bullet}="1";
(40, 0)*+{\bullet}="2";
( 55,25)*+{\bullet}="3";
(20,45)*+{\bullet}="4";
 {\ar@{->} "1";"2"_{\scriptscriptstyle\rho_{ g_3 }} };
{\ar@{->} "2";"3"_{\scriptscriptstyle\rho_{  g_2 } } };
{\ar@{->}^{\scriptscriptstyle \rho_{ g_3g_2g_1} } "1";"4" };
{\ar@{->}^{\scriptscriptstyle\rho_{\scriptscriptstyle  g_2g_1}  } "2";"4" };
{\ar@{->} "3";"4"_{\scriptscriptstyle\rho_{ g_1}} };
{\ar@{=>}_{\scriptscriptstyle \phi_{g_2,g_1}} (52,25)*+{  };(33,15)*+{  } };
{\ar@{=>}^{\scriptscriptstyle \phi_{g_3,g_2g_1}} (35,3)*+{  };(10,19)*+{  } };
 \endxy \xy 0;/r.30pc/:
  (0,0)*+{ }="1";
(10,0)*+{ }="2";
{\ar@3{->}^{ }   "1"+(0,18);"2"+(0,18)};
   \endxy
 \xy0;/r.22pc/:
(0,0 )*+{\bullet}="1";
(40, 0)*+{\bullet}="2";
( 55,25)*+{\bullet}="3";
(20,45)*+{\bullet}="4";
 {\ar@{->} "1";"2"_{\scriptscriptstyle\rho_{ g_3 }} };
{\ar@{-->}^{\scriptscriptstyle\rho_{ g_3g_2 }  }"1"; "3"};
{\ar@{->} "2";"3"_{\scriptscriptstyle\rho_{  g_2 } } };
{\ar@{->}^{\scriptscriptstyle \rho_{ g_3g_2g_1} } "1";"4" };
{\ar@{->} "3";"4"_{\scriptscriptstyle\rho_{ g_1}} };
{\ar@{=>}_{ \scriptscriptstyle\phi_{g_3g_2,g_1}} (45,25)*+{  };(15,25)*+{  } };
{\ar@{=>}_{\scriptscriptstyle \phi_{g_3,g_2 }} (38,1)*+{  };(28,13)*+{  } };
 \endxy,\end{equation}
which can also be drawn   in the following form:
  \begin{equation}\label{eq:associator}
  \xy 0;/r.28pc/:
 (-15,0)*+{ {x}}="1";
(-30,0)*+{ x}="3";
(-45,0)*+{ x}="5";(-60,0)*+{ x}="7";(-50,-5)*+{\scriptscriptstyle{\phi_{g_3,g_2g_1}}  }="07";
(-30,-11)*+{  }="11";{\ar@{=>}^{\scriptscriptstyle{\phi_{ g_2,g_1}} } "3"; "11"};
(-45,-13)*+{ }="12";{\ar@{=>}_{} "5"; "12"};
{\ar@{-->}^{\scriptscriptstyle\rho_{g_1}} "3"; "1"};
{\ar@{-->}^{\scriptscriptstyle\rho_{g_2}} "5"; "3"};
{\ar@{-->}^{\scriptscriptstyle\rho_{g_3}} "7"; "5"};
 {\ar@{-->}@/_4.13pc/_{\scriptscriptstyle\rho_{g_3g_2g_1}} "7";"1"};
  {\ar@{-->}@/_2.99pc/|-{\scriptscriptstyle\rho_{g_2g_1}}"5";"1"};
   \endxy\xy0;/r.28pc/:
  (0,0)*+{ }="1";
(10,0)*+{ }="2";
{\ar@3{->}^{ }   "1"+(0,-4);"2"+(0,-4)};
   \endxy   \xy 0;/r.282pc/:
 (-15,0)*+{ {x}}="1";
(-30,0)*+{ x}="3";
(-45,0)*+{ x}="5";(-60,0)*+{ x}="7";(-45,-11)*+{  }="12";(-25,-5)*+{ \scriptscriptstyle{\phi_{g_3g_2,g_1}}   }="17";
(-30,-13)*+{  }="11";{\ar@{=>}^{\scriptscriptstyle{\phi_{g_3,g_2 }} } "5"; "12"};{\ar@{=>}^{} "3"; "11"};
{\ar@{-->}^{\scriptscriptstyle\rho_{g_1}} "3"; "1"};
{\ar@{-->}^{\scriptscriptstyle\rho_{g_2}} "5"; "3"};
{\ar@{-->}^{\scriptscriptstyle\rho_{g_3}} "7"; "5"};
 {\ar@{-->}@/_4.13pc/_{\scriptscriptstyle\rho_{g_3g_2g_1}} "7";"1"};
  {\ar@{-->}@/_2.99pc/|-{\scriptscriptstyle\rho_{g_3g_2}} "7";"3"};
   \endxy;
 \end{equation}

(5) a $2$-isomorphism
$
     \phi_1:\rho_1\Longrightarrow 1_x;
$

such that the following conditions are satisfied:

$\bullet$ $\phi_{1,g}=\phi_1\#_0 \rho_g,$  $\phi_{ g, 1}=\rho_g\#_0\phi_1 $.

$\bullet$ the {\it $3$-cocycle condition} that for any $g_4,\ldots, g_1\in G$, we have
\begin{equation}\label{eq:4-cocycle} \begin{array}{r}
 \left  \{ [\rho_{g_4}\#_0\Phi_{g_3,g_2,g_1}]\#_1 \phi_{g_4,g_3g_2 g_1} \right\}\#_2\left\{[ \rho_{g_4}\#_0\phi_{ g_3 ,g_2}\#_0\rho_{g_1}
]\#_1\Phi_{g_4,g_3g_2, g_1}\right\}\,\,\,\\ \qquad \#_2\left\{[\Phi_{g_4,g_3,g_2 }\#_0\rho_{g_1}] \#_1   \phi_{g_4 g_3g_2, g_1}  \right\}\,\,\,\\
= \left  \{[(\rho_{g_4}\rho_{g_3})\#_0\phi_{ g_2, g_1} ]\#_1 \Phi_{g_4,g_3,g_2 g_1} \right\}\#_2\left\{[\phi_{ g_4, g_3}\#_0 (\rho_{g_2}\rho_{g_1})]\#_1
\Phi_{g_4g_3,g_2, g_1}\right\}.
 \end{array}\end{equation}

$$
     \xy 0;/r.17pc/:
(0,0 )*+{\bullet}="1";
(60, 0)*+{\bullet}="2";
( 75,35)*+{\bullet}="3";
(35,30)*+{\bullet}="4";
(30,65)*+{\bullet}="5";
 {\ar@{->}_{\scriptscriptstyle\rho_{\scriptscriptstyle g_4 }} "1";"2" };
{\ar@{-->}|-{  }"1"; "3"};
{\ar@{->}_{\scriptscriptstyle\rho_{ g_3} } "2";"3" };
{\ar@{-->}|-{ } "1";"4" };
{\ar@{-->}|-{ } "2";"4" };
{\ar@{-->}_{\scriptscriptstyle\rho_{g_2} } "3";"4" };
{\ar@{->}^{\scriptscriptstyle\rho_{\scriptscriptstyle g_4 g_3g_2 g_1} } "1";"5" };
{\ar@{->}|-{ } "2";"5" };
{\ar@{->}_{\scriptscriptstyle \rho_{ g_2 g_1}} "3";"5" };{\ar@{-->}^{\scriptscriptstyle\rho_{ g_1} } "4";"5" };
 \endxy.
$$
Equivalently, the composition of the $3$-isomorphisms represented by $5$ tetrahedrons above in the boundary of a $4$-simplex is the identity. This comes from the fact that the boundary of the corresponding $4$-simplex in the $3$-category $G$ is the identity $3$-arrow.

\begin{rem} (1)
For simplicity, we assume in this paper that $\rho_1=1_x$ and that
    $ \phi_1$   is the identity.

    (2) The $3$-cocycle $\{ \Phi_{ g_3,g_2,g_1}\}$ defines an element of the  $3$-dimensional non-abelian   cohomology. A first attempt at an explicit description of the  $3$-dimensional non-abelian  cohomology
    of a group goes back to Dedecker \cite{De}. See section 4 of \cite{Br94} for $3$-dimensional non-abelian $\rm \check{C}$ech cocycles, which can be used to construct a $2$-gerbe.
    \end{rem}
\subsection{The $3$-cocycle condition }   We will give a clear geometric  description of the $3$-cocycle condition (\ref{eq:4-cocycle}) in terms of $5$ tetrahedrons   in the boundary of a $4$-simplex above. This is equivalent to triviality  of the $3$-holonomy.  See section 5 C of \cite{Wa} for the  $3$-holonomy in the lattice $3$-gauge theory (the cubical case), where
      $3$-gauge theory from the point of view of $\mathbf{Gray}$-categories is investigated.

In the left-hand side of the $3$-cocycle condition (\ref{eq:4-cocycle}), the first $3$-isomorphism is
\begin{equation}\label{eq:A1}
A_1= [\rho_{g_4}\#_0\Phi_{g_3,g_2,g_1}]\#_1 \phi_{g_4,g_3g_2 g_1} .
\end{equation}
Here $ \Phi_{g_3,g_2,g_1} $ is a $3$-isomorphism whiskered from left by the $1$-isomorphism $\rho_{g_4}$, and  $\rho_{g_4}\#_0\Phi_{g_3,g_2,g_1} $ is   whiskered from below by the $2$-isomorphism $\phi_{g_4,g_3g_2 g_1}$.  $A_1$ corresponds to the $3$-cell
 $$
     \xy 0;/r.17pc/:
(0,0 )*+{\bullet}="1";
(60, 0)*+{\bullet}="2";
( 75,35)*+{\bullet}="3";
(40,25)*+{\bullet}="4";
(30,65)*+{\bullet}="5";(43,46)*+{\scriptscriptstyle\rho_{b}}="15";
(47,12)*+{\scriptscriptstyle\rho_{a}}="25";
 {\ar@{->}^{\scriptscriptstyle\rho_{g_4}} "1";"2" };
{\ar@{->}_{\scriptscriptstyle\rho_{g_3} } "2";"3" };
{\ar@{->}^{\scriptscriptstyle\rho_{g_4 g_3g_2 g_1} } "1";"5" };
{\ar@{-->}|-{  } "2";"4" };
{\ar@{-->}_{\scriptscriptstyle\rho_{g_2 }} "3";"4" };
{\ar@{->}|-{  } "2";"5" };
{\ar@{->}_{\scriptscriptstyle\rho_{g_2 g_1} } "3";"5" };{\ar@{-->}^{ \rho_{g_1} } "4";"5" };
( 30,-8)*+{ \scriptstyle    {\rm The}\, \,{\rm 3-arrow}\,     A_1   }="30";
{\ar@{=>}^{\scriptscriptstyle\phi_{g_4,g_3g_2 g_1} } (48, 3);(16,28)};
 \endxy
$$whose $2$-source and $2$-target are the $2$-isomorphisms
\begin{equation}\label{eq:s-A1}\begin{array}{l}
 s_2(A_1)=  [(\rho_{g_4}\rho_{g_3})\#_0\phi_{ g_2 ,g_1}]\#_1[\rho_{g_4}\#_0\phi_{ g_3,g_2 g_1}]\#_1 \phi_{g_4,g_3g_2 g_1}: \rho_{g_4}\rho_{g_3}\rho_{g_2}\rho_{g_1}\longrightarrow\rho_{g_4  g_3  g_2 g_1}, \\
 t_2(A_1)=  [ \rho_{g_4}\#_0\phi_{ g_3 ,g_2}\#_0\rho_{g_1} ]\#_1[\rho_{g_4}\#_0\phi_{ g_3g_2, g_1}]\#_1 \phi_{g_4,g_3g_2 g_1} : \rho_{g_4}\rho_{g_3}\rho_{g_2}\rho_{g_1}\longrightarrow\rho_{g_4  g_3  g_2 g_1},
\end{array}\end{equation}
 corresponding to   $2$-cells
  $$
     \xy 0;/r.17pc/:
(0,0 )*+{\bullet}="1";
(60, 0)*+{\bullet}="2";
( 75,35)*+{\bullet}="3";
(40,25)*+{\bullet}="4";
(30,65)*+{\bullet}="5";
 {\ar@{->}^{\scriptscriptstyle\rho_{g_4}} "1";"2" };
{\ar@{->}_{\scriptscriptstyle\rho_{g_3} } "2";"3" };
{\ar@{->}^{\scriptscriptstyle\rho_{g_4 g_3g_2 g_1 } } "1";"5" };
{\ar@{-->}_{\scriptscriptstyle\rho_{g_2 }} "3";"4" };
{\ar@{->}|-{ } "2";"5" };
{\ar@{->}_{\scriptscriptstyle\rho_{g_2 g_1} } "3";"5" };{\ar@{-->}^{\scriptscriptstyle \rho_{g_1} } "4";"5" };( 30,-8)*+{ \scriptstyle    {\rm The} \,\,  2-{\rm arrow}\,\,
s_2(A_1) }="30";{\ar@{==>}|-{ } (40, 28);(50,50)};{\ar@{=>}|-{ } (69, 35);(55,20)};{\ar@{=>}|-{ } (45, 3);(20,28)};
 \endxy\xy 0;/r.25pc/:
  (0,0)*+{ }="1";
(15,0)*+{ }="2";
{\ar@3{->}^{ }   "1"+(0,23);"2"+(0,23)};
   \endxy
     \xy 0;/r.17pc/:
(0,0 )*+{\bullet}="1";
(60, 0)*+{\bullet}="2";
( 75,35)*+{\bullet}="3";
(40,25)*+{\bullet}="4";
(30,65)*+{\bullet}="5";
(43,46)*+{\scriptscriptstyle\rho_{b}}="15";
(47,12)*+{\scriptscriptstyle\rho_{a}}="25";
 {\ar@{->}^{\scriptscriptstyle\rho_{g_4}} "1";"2" };
{\ar@{->}_{\scriptscriptstyle\rho_{g_3 }} "2";"3" };
{\ar@{-->} "2";"4"   };
{\ar@{->}_{\scriptscriptstyle\rho_{g_2 }} "3";"4" };
{\ar@{->}^{\scriptscriptstyle\rho_{g_4 g_3g_2 g_1 } } "1";"5" };
{\ar@{->}_{}"2";"5" };
{\ar@{-->}^{\scriptscriptstyle\rho_{g_1} } "4";"5" };( 30,-8)*+{ \scriptstyle   {\rm The} \,\,  2-{\rm arrow}\, \,   t_2(A_1) }="30";
{\ar@{=>}|-{ } (69, 30);(55,20)};{\ar@{=>}|-{ } (45, 3);(20,28)};{\ar@{==>}|-{ } (42, 23);(42.6,38)};
 \endxy
$$
respectively, where $\rho_a :=\rho_{g_3g_2  }, \rho_b :=\rho_{g_3g_2 g_1}$. It is fundamental in this paper to write down the   $p$-arrow corresponding to $p$-cells as whiskered vertical compositions.
For example,  $s_2(A_1)$ in (\ref{eq:s-A1}) is the composition of the following three whiskered $2$-isomorphisms.
 $$
     \xy 0;/r.17pc/:(0,0 )*+{\bullet}="1";
(50, 0)*+{\bullet}="2";
( 65,30)*+{\bullet}="3";
(30,20)*+{\bullet}="4";
(20,60)*+{\bullet}="5";{\ar@{->}_{\scriptscriptstyle\rho_{g_4}} "1";"2" };
 {\ar@{->}_{\scriptscriptstyle\rho_{g_3} } "2";"3" };
{\ar@{-->}^{\scriptscriptstyle\rho_{g_2 }} "3";"4" };
{\ar@{->}_{\scriptscriptstyle\rho_{g_2 g_1} } "3";"5" };{\ar@{-->}^{\scriptscriptstyle \rho_{g_1} } "4";"5" }; {\ar@{==>}^{\scriptscriptstyle \phi_{ g_2 ,g_1}} (30, 23);(50,38)};
 \endxy \qquad
 \xy 0;/r.17pc/:
(0,0 )*+{\bullet}="1";
(50, 0)*+{\bullet}="2";( 65,30)*+{\bullet}="3";
(20,60)*+{\bullet}="5";
 {\ar@{->}_{\scriptscriptstyle\rho_{g_4}} "1";"2" };
{\ar@{->}_{\scriptscriptstyle\rho_{g_3} } "2";"3" };
{\ar@{->}^{\scriptscriptstyle\rho_{ g_3 g_2 g_1} } "2";"5" };
{\ar@{->}_{\scriptscriptstyle\rho_{g_2 g_1} } "3";"5" }; {\ar@{=>}^{\scriptscriptstyle\phi_{ g_3,g_2 g_1} } (59, 30);(35,32)};
 \endxy \qquad \xy 0;/r.17pc/:
(0,0 )*+{\bullet}="1";
(50, 0)*+{\bullet}="2";
(20,60)*+{\bullet}="5";
 {\ar@{->}_{\scriptscriptstyle\rho_{g_4}} "1";"2" };
{\ar@{->}^{\scriptscriptstyle\rho_{g_4 g_3g_2 g_1 } } "1";"5" };
{\ar@{->}_-{\scriptscriptstyle\rho_{  g_3g_2 g_1} } "2";"5" };
 {\ar@{=>}_{\scriptscriptstyle \phi_{g_4,g_3g_2 g_1}} (40, 5);(10,23)};
 \endxy.
$$

The second $3$-isomorphism in the left-hand side of the $3$-cocycle condition (\ref{eq:4-cocycle}) is
$
   A_2=   [ \rho_{g_4}\#_0\phi_{ g_3 ,g_2}\#_0\rho_{g_1} ] \#_1 \Phi_{g_4,g_3 g_2  ,g_1} ,
$
corresponding to the $3$-cell
$$
     \xy 0;/r.17pc/:
(0,0 )*+{\bullet}="1";
(60, 0)*+{\bullet}="2";
( 75,35)*+{\bullet}="3";
(40,25)*+{\bullet}="4";
(30,65)*+{\bullet}="5";
 {\ar@{->}^{\scriptscriptstyle\rho_{g_4}} "1";"2" };
{\ar@{->}_{\scriptscriptstyle\rho_{g_3 }} "2";"3" };
{\ar@{-->}_{ \rho_a  } "1";"4" };
{\ar@{-->}^{\scriptscriptstyle\rho_{b} } "2";"4" };
{\ar@{->}_{\scriptscriptstyle\rho_{g_2 }} "3";"4" };
{\ar@{->}|-{ } "1";"5" };
{\ar@{->}|-{ } "2";"5" };
{\ar@{-->}^{\scriptscriptstyle\rho_{g_1} } "4";"5" };( 30,-8)*+{ \scriptstyle   {\rm The} \,\, 3-{\rm arrow}\,\,    A_2}="30";
 \endxy
$$(here $\rho_a :=\rho_{g_4g_3g_2  }, \rho_b :=\rho_{g_3g_2  }$) with  $2$-source $s_2(A_2)=  t_2(A_1)$ in (\ref{eq:s-A1}) and  $2$-target
\begin{equation}\label{eq:t-A2}
 t_2(A_2)=  [ \rho_{g_4}\#_0\phi_{ g_3 ,g_2}\#_0\rho_{g_1} ]\#_1[\phi_{g_4, g_3g_2 }\#_0\rho_{g_1}]\#_1 \phi_{g_4g_3g_2 ,g_1}
 \end{equation}corresponding to   $2$-cells
$$
     \xy 0;/r.17pc/:
(0,0 )*+{\bullet}="1";
(60, 0)*+{\bullet}="2";
( 75,35)*+{\bullet}="3";
(40,25)*+{\bullet}="4";
(30,65)*+{\bullet}="5";
 {\ar@{->}^{\scriptscriptstyle\rho_{g_4}} "1";"2" };
{\ar@{->}_{\scriptscriptstyle\rho_{g_3 }} "2";"3" };
{\ar@{->}^{\scriptscriptstyle\rho_a } "1";"4" };
{\ar@{->}_{\scriptscriptstyle\rho_b  } "2";"4" };
{\ar@{->}_{\scriptscriptstyle\rho_{g_2 }} "3";"4" };
{\ar@{->}^{\scriptscriptstyle \rho_{ g_4 g_3g_2g_1 }} "1";"5" };
{\ar@{->}_{\scriptscriptstyle\rho_{g_1} } "4";"5" };
 {\ar@{=>}  (70,32);(59,15) };
  {\ar@{=>}  (55,3);(26,15) }; {\ar@{=>}  (38,25);(20,35) };
( 30,-8)*+{ \scriptstyle   {\rm The} \, \, 2-{\rm arrow}\,   t_2( A_2)}="30";
 \endxy.
$$
And the third $3$-isomorphism in the left-hand side of the $3$-cocycle condition (\ref{eq:4-cocycle}) is
$$
A_3=[\Phi_{g_4,g_3,g_2 }\#_0\rho_{g_1}] \#_1   \phi_{g_4 g_3 g_2 ,g_1} ,
$$corresponding to the $3$-cell
$$
     \xy 0;/r.17pc/:
(0,0 )*+{\bullet}="1";
(60, 0)*+{\bullet}="2";
( 75,35)*+{\bullet}="3";
(35,30)*+{\bullet}="4";
(30,65)*+{\bullet}="5";
 {\ar@{->}^{\scriptscriptstyle\rho_{ g_4 }} "1";"2" };
{\ar@{-->}|-{  }"1"; "3"};
{\ar@{->}_{\scriptscriptstyle\rho_{ g_3} } "2";"3" };
{\ar@{->}^{\scriptscriptstyle\rho_{a}} "1";"4" };
{\ar@{->}|-{ } "2";"4" };
{\ar@{->}_{\scriptscriptstyle\rho_{g_2} } "3";"4" };
{\ar@{->}|-{ } "1";"5" };
 {\ar@{->}_{\scriptscriptstyle\rho_{ g_1} } "4";"5" }; {\ar@{=>}_{ \phi_b} (30, 30);(20,42)};
 ( 30,-8)*+{ \scriptstyle    {\rm The} \, \, 3-{\rm arrow}\, \,   A_3 }="30";
 \endxy
$$
(here $\rho_a :=\rho_{g_4g_3g_2  }$, $\phi_b:=\phi_{g_4 g_3 g_2 ,g_1}$)  with $2$-source   $s_2(A_3)=  t_2(A_2)$  in (\ref{eq:t-A2})  and  $2$-target
  \begin{equation}\label{eq:t-A3}
  t_2(A_3)=   [ \phi_{ g_4 ,g_3}\#_0(\rho_{g_2}\rho_{g_1}) ]\#_1[\phi_{ g_4g_3, g_2}\#_0\rho_{g_1}]\#_1 \phi_{g_4g_3g_2, g_1},
  \end{equation}corresponding to  the $2$-cells
  \begin{equation}\label{eq:t-A3'}
     \xy 0;/r.17pc/:
(0,0 )*+{\bullet}="1";
(60, 0)*+{\bullet}="2";
( 75,35)*+{\bullet}="3";
(35,30)*+{\bullet}="4";
(30,65)*+{\bullet}="5";
 {\ar@{->}^{\scriptscriptstyle\rho_{g_4}} "1";"2" };
{\ar@{->}_{\scriptscriptstyle\rho_{ b  } }"1"; "3"};
{\ar@{->}_{\scriptscriptstyle\rho_{g_3 }} "2";"3" };
{\ar@{->}^{\scriptscriptstyle\rho_a } "1";"4" };
{\ar@{->}_{\scriptscriptstyle\rho_{g_2} } "3";"4" };
{\ar@{->}^{\scriptscriptstyle\rho_{ g_4 g_3 g_2g_1} } "1";"5" };
 {\ar@{->}_{\scriptscriptstyle\rho_{g_1} } "4";"5" };( 30,-8)*+{ \scriptstyle   {\rm The} \,\,  2-{\rm arrow}\, \,   t_2(A_3) }="30";
 {\ar@{=>}|-{ } (57, 2);(45,10)};
  {\ar@{=>}|-{ } (50, 27);(35,23)};
   {\ar@{=>}|-{ } (30, 30);(20,42)};
 \endxy,
    \end{equation}where $\rho_a :=\rho_{g_4g_3g_2  }$, $\rho_b:=\rho_{g_4 g_3  }$.
  Then the composition $A_1\#_2A_2\#_2A_3$ of  $3$-isomorphisms is the left-hand side of the $3$-cocycle condition (\ref{eq:4-cocycle}), whose $2$-source is $s_2(A_1)$ in (\ref{eq:s-A1}) and $2$-target is
  $t_2(A_3)$ in (\ref{eq:t-A3}).

On the right-hand side of the $3$-cocycle condition (\ref{eq:4-cocycle}),  the first $3$-isomorphism is
$$
   A_1' = [(\rho_{g_4}\rho_{g_3})\#_0\phi_{ g_2, g_1} ]\#_1 \Phi_{g_4,g_3,g_2 g_1} ,
$$
corresponding to the $3$-cell$$
     \xy 0;/r.17pc/:
(0,0 )*+{\bullet}="1";
(60, 0)*+{\bullet}="2";
( 75,35)*+{\bullet}="3";
(35,30)*+{\bullet}="4";
(30,65)*+{\bullet}="5";
 {\ar@{->}^{\scriptscriptstyle\rho_{ g_4 }} "1";"2" };
{\ar@{-->}|-{  }"1"; "3"};
{\ar@{->}_{\scriptscriptstyle\rho_{ g_3} } "2";"3" };
{\ar@{-->}_{\scriptscriptstyle\rho_{g_2} } "3";"4" };
{\ar@{->}|-{ } "1";"5" };
{\ar@{->}|-{ } "2";"5" };
{\ar@{->}_{\scriptscriptstyle\rho_{g_2g_1} } "3";"5" };{\ar@{-->}^{\scriptscriptstyle\rho_{ g_1} } "4";"5" };( 30,-8)*+{ \scriptstyle    {\rm The} \,\,  3-{\rm arrow}\, \,    A_1'  }="30";
 \endxy
$$
with $2$-source   $s_2(A_1)$ in (\ref{eq:s-A1}) and   $2$-target   \begin{equation}\label{eq:t-A1'}
 t_2(A_1')=  [ (\rho_{g_4}\rho_{g_3}) \#_0\phi_{ g_2 ,g_1} ]\#_1[\phi_{g_4, g_3  }\#_0 \rho_{g_2 g_1} ]\#_1 \phi_{g_4 g_3,g_2 g_1} ,
 \end{equation} corresponding to the left $2$-cells in the following diagram:
$$
    \xy 0;/r.17pc/:
(0,0 )*+{\bullet}="1";
(60, 0)*+{\bullet}="2";
( 75,35)*+{\bullet}="3";
(35,30)*+{\bullet}="4";
(30,65)*+{\bullet}="5";
(45,10)*+{(2)}="11";
(50,42)*+{(1)}="12";
(25,30)*+{(3)}="13";
 {\ar@{->}^{\scriptscriptstyle\rho_{g_4}} "1";"2" };
{\ar@{->}^{\scriptscriptstyle\rho_{g_4g_3} }"1"; "3"};
{\ar@{->}_{\scriptscriptstyle\rho_{g_3} } "2";"3" };
{\ar@{->}_{\scriptscriptstyle\rho_{g_2} } "3";"4" };
{\ar@{->}|-{ } "1";"5" };
{\ar@{->}_{\scriptscriptstyle\rho_{g_2g_1} } "3";"5" };{\ar@{->}_{\scriptscriptstyle\rho_{g_1} } "4";"5" };( 30,-8)*+{ \scriptstyle    {\rm The} \,\,  2-{\rm arrow}\,\,
t_2(A_1') }="30";
 \endxy \xy0;/r.28pc/:
  (0,0)*+{ }="1";
(15,0)*+{ }="2";
{\ar@{=}^{ }   "1"+(0,23);"2"+(0,23)};
   \endxy\xy 0;/r.17pc/:
(0,0 )*+{\bullet}="1";
(60, 0)*+{\bullet}="2";
( 75,35)*+{\bullet}="3";
(35,30)*+{\bullet}="4";
(30,65)*+{\bullet}="5";
(42,10)*+{(1)}="11";
(50,42)*+{(2)}="12";
(25,30)*+{(3)}="13";
 {\ar@{->}^{\scriptscriptstyle\rho_{g_4}} "1";"2" };
{\ar@{->}^{\scriptscriptstyle\rho_{g_4g_3} }"1"; "3"};
{\ar@{->}_{\scriptscriptstyle\rho_{ g_3}} "2";"3" };
{\ar@{->}_{\scriptscriptstyle\rho_{g_2} } "3";"4" };
{\ar@{->}|-{ } "1";"5" };
{\ar@{->}|-{ } "3";"5" };{\ar@{->}_{\scriptscriptstyle\rho_{g_1} } "4";"5" };( 30,-8)*+{ \scriptstyle   {\rm The} \,\,  2-{\rm arrow}\, \,   s_2( A_2') }="30";
 \endxy.
$$ By the interchange  law
(\ref{eq:interchanging-law}) for horizontal compositions,
we can interchange  $2$-isomorphism  (1) and (2) identically in the left $2$-cells above to get the $2$-isomorphism
\begin{equation}\label{eq:s-A2'}
s_2(A_2') =[\phi_{g_4, g_3
}\#_0(\rho_{g_2}\rho_{g_1})]\#_1 [  \rho_{g_4 g_3}  \#_0\phi_{ g_2 ,g_1} ]\#_1\phi_{g_4 g_3,g_2 g_1},
\end{equation} corresponding to  the right $2$-cells above.
 The last $3$-isomorphism is
$$A_2'=[\phi_{g_4,g_3 }\#_0(\rho_{g_2}\rho_{g_1})]\#_1
 \Phi_{g_4g_3,g_2,g_1}
$$
$$
    \xy 0;/r.17pc/:
(0,0 )*+{\bullet}="1";
(60, 0)*+{\bullet}="2";
( 75,35)*+{\bullet}="3";
(35,30)*+{\bullet}="4";
(30,65)*+{\bullet}="5";
 {\ar@{->}^{\scriptscriptstyle\rho_{g_4}} "1";"2" };
{\ar@{->}^{\scriptscriptstyle\rho_{g_4 g_3} }"1"; "3"};{\ar@{->}|-{ }"1"; "4"};
{\ar@{->}_{\scriptscriptstyle\rho_{ g_3}} "2";"3" };
{\ar@{->}_{\scriptscriptstyle\rho_{g_2} } "3";"4" };
{\ar@{->}|-{ } "1";"5" };
{\ar@{->}|-{ } "3";"5" };{\ar@{->}_{\scriptscriptstyle\rho_{g_1 }} "4";"5" };{\ar@{=>}_{\scriptscriptstyle\phi_{g_4,g_3 } } (58, 2);(35,13)};
( 30,-8)*+{ \scriptstyle   {\rm The} \,\,  3-{\rm arrow}\, \,    A_2' }="30";
 \endxy
$$
whose $2$-target is exactly the $2$-isomorphism $t_2(A_3)$ in   (\ref{eq:t-A3})-(\ref{eq:t-A3'}).

It is not easy to draw  several $3$-cells corresponding to the composition of $3$-arrows  in a $3$-category $\mathcal{C}$.
For this reason,  let us consider the associated $2$-category    $\mathcal{C}^{+}$  such that
$$
   (\mathcal{C}^{+})_i:=\mathcal{C}_{i+1} ,
$$
and    $i$-source
and    $i$-target are  $s_{i+1}$  and $t_{i+1}$,  $i=0,1,2$, respectively. Functions $\widetilde{\#}_p:\mathcal{C}^{+}_k\times
\mathcal{C}^{+}_k\longrightarrow  \mathcal{C}^{+}_k$ are described by  arrows $ {\#}_{p+1}:\mathcal{C}_{k+1}\times
\mathcal{C}_{k+1}\longrightarrow  \mathcal{C}_{k+1}$, and identities $\widetilde{1}:\mathcal{C}_{k-1}^{+}\rightarrow\mathcal{C}_{k }^{+}$ are defined in a similar manner. $\mathcal{C}^{+}$ is a strict $2$-category since $Hom_{\mathcal{C}}(x,y)$ is a strict $2$-category for any objects $x,y$ of $\mathcal{C}$, by the fact that a strict  $3$-category is a category enriched over the category of all small strict $2$-categories.  We also define $\mathcal{C}^{++}$ to be the  category with
$$
   (\mathcal{C}^{++})_i:=\mathcal{C}_{i+2}
$$
and the $i$-source
and  $i$-target are now $s_{i+2}$  and $t_{i+2}$,  $i=0,1 $, respectively. The function  $\widetilde{\#}_0:\mathcal{C}^{++}_1\times
\mathcal{C}^{++}_1\longrightarrow  \mathcal{C}^{++}_1$ becomes $ {\#}_{ 2}:\mathcal{C}_{3}\times
\mathcal{C}_{3}\longrightarrow  \mathcal{C}_{3}$.  $\mathcal{C}^{++}$ is a  category by the same reason.

In the corresponding strict $2$-category $\mathcal{C}^{+}$, $3$-isomorphism $A_1$ in (\ref{eq:A1}) is represented by  the following $2$-isomorphism:
\begin{equation}\label{eq:Omega0}
   \xy0;/r.22pc/:
   (-10,0 )*+{  \rho_{g_4}\rho_{g_3}\rho_{g_2}\rho_{g_1}}="1";
( 45,15)*+{ \rho_{g_4}\rho_{g_3 g_2}\rho_{g_1} }="3";
(-10,40 )*+{  \rho_{g_4}\rho_{g_3}\rho_{g_2 g_1}  }="5";
(  45,55)*+{  \rho_{g_4}\rho_{g_3 g_2 g_1} }="7";
(95,55)*+{  \rho_{g_4  g_3  g_2g_1} }="8";
(25,3)*+{ \scriptstyle \rho_{g_4} \#_0 \phi_{  g_3,g_2 }\#_0 \rho_{g_1} }="08";
(16,52)*+{ \scriptstyle  \rho_{g_4 }\#_0\phi_{g_3, g_2g_1}  }="09";
{\ar@{<-}^{  } "3";"1" };
{\ar@{<-}_{ (\rho_{g_4}\rho_{g_3})\#_0 \phi_{g_2,g_1}} "5";"1" };
 {\ar@{<-}^{  \rho_{g_4} \#_0\phi_{g_3 g_2,g_1 } } "7";"3" };
 {\ar@{<-}_{} "7";"5" }; {\ar@{->}"7";"8"^{ \phi_{g_4, g_3g_2 g_1}  } };
 {\ar@{=>}^{ A_1  } (3,35)*{};(43,20)*{}} ;
 \endxy
   \end{equation} Here  the upper and lower boundaries in (\ref{eq:Omega0}) (as $1$-arrows in $\mathcal{C}^{+}$)  represent the source  $s_2(A_1) $ and target  $t_2(A_1) $ in (\ref{eq:s-A1}) (as  $2$-isomorphisms in $\mathcal{C} $)
  respectively.
 To draw the picture neatly, we omit the whiskering parts. Then the $3$-cocycle condition
(\ref{eq:4-cocycle}) can be expressed simply as an identity of $2$-isomorphisms in $\mathcal{C}^{+}$ as follows:
\begin{equation}\label{eq:Omega}
   \xy 0;/r.19pc/:
   (-10,0 )*+{ \bullet}="1";
(40, 0)*+{ \bullet }="2";
( 15,15)*+{\bullet }="3";
(55,15)*+{ \bullet }="4";
(-10,40 )*+{ \bullet  }="5";
(  15,55)*+{ \bullet }="7";
(55,55)*+{\bullet }="8";
 {\ar@{->}_{\scriptscriptstyle\phi_{ g_4,g_3} }  "1";"2" };
{\ar@{<-}_{\scriptscriptstyle\phi_{  g_3,g_2 } } "3";"1" };
{\ar@{<-}_{\scriptscriptstyle \phi_{g_2,g_1}} "5";"1" };
{\ar@{<-}^{ \scriptscriptstyle\phi_{g_4 g_3, g_2}} "4";"2" };
 {\ar@{<-}^{\scriptscriptstyle\phi_{a } } "7";"3" };
{\ar@{->}^{\scriptscriptstyle\phi_{  g_4, g_3 g_2} } "3";"4" };
{\ar@{<-}^{\scriptscriptstyle\phi_{g_4g_3g_2, g_1} } "8";"4" };
 {\ar@{<-}_{\scriptscriptstyle\phi_{g_3, g_2g_1} } "7";"5" };
 {\ar@{->}^{\scriptscriptstyle\phi_{g_4, g_3g_2 g_1}  } "7";"8" };
 {\ar@{=>}_{ A_1  } (-5,38)*{};(13,20)*{}} ;
 {\ar@{=>}^{    A_2  } (20,50)*{};(50,20)*{}} ;
  {\ar@{=>}_{   A_3  } (15,12)*{};(35,3)*{}} ;
 \endxy  \xy0;/r.22pc/:
  (0, 0)*+{ }="1";
(15,0)*+{ }="2";
{\ar@{=}^{ }   "1"+(0,20);"2"+(0,20)};
   \endxy
  \xy 0;/r.19pc/:
   (-10,0 )*+{\bullet }="1";
(40, 0)*+{ \bullet }="2";
( 40,40)*+{\bullet }="3";
(60,15)*+{\bullet  }="4";
(-10,40 )*+{ \bullet  }="5";
(  15,55)*+{ \bullet }="7";
(60,55)*+{\bullet }="8";
 {\ar@{->}_{\scriptscriptstyle\phi_{ g_4,g_3} }  "1";"2" };
{\ar@{<-}_{\scriptscriptstyle\phi_{  g_2, g_1 } } "3";"2" };
{\ar@{<-}_{\scriptscriptstyle \phi_{g_2,g_1}} "5";"1" };
{\ar@{<-}^{\scriptscriptstyle \phi_{ g_4 g_3, g_2}} "4";"2" };
 {\ar@{<-}^{\scriptscriptstyle\phi_{b} } "8";"3" };
{\ar@{->}_{\scriptscriptstyle\phi_{   g_4,g_3} } "5";"3" };
{\ar@{<-}^{\scriptscriptstyle\phi_{ g_4 g_3 g_2 ,g_1} } "8";"4" };
 {\ar@{<-}_{\scriptscriptstyle\phi_{g_3, g_2g_1} } "7";"5" };
 {\ar@{->}^{\scriptscriptstyle\phi_{g_4, g_3 g_2 g_1}  } "7";"8" };
 {\ar@{=}|-{   } (0,20)*{};(20,20)*{}} ;
 {\ar@{=>}_{   A_2' } (43,35)*{};(58,18)*{}} ;
  {\ar@{=>}_{  A_1' } (15,52)*{};(35,43)*{}} ; \endxy
\end{equation}where $\phi_a :=\phi_{g_3g_2 , g_1}$, $\phi_b:=\phi_{g_4 g_3, g_2g_1}$.
Here $\bullet$'s above represent  $1$-isomorphisms in $\mathcal{C}$. The $2$-isomorphisms in (\ref{eq:s-A1}), (\ref{eq:t-A2}), (\ref{eq:t-A3}), (\ref{eq:t-A1'}) and (\ref{eq:s-A2'}) are represented by
 $1$-isomorphisms in  (\ref{eq:Omega}). Now the $3$-cocycle condition (\ref{eq:Omega}) can be viewed as the commutativity of the $2$-isomorphisms in the boundary of the following   cube in $\mathcal{C}^{+}$:
\begin{equation}\label{eq:Omega-cube}
   \xy0;/r.22pc/:
   (-10,0 )*+{ \bullet}="1";
(40, 0)*+{ \bullet }="2";
( 15,15)*+{\bullet }="3";( 40,40)*+{\bullet }="13";
(65,15)*+{ \bullet }="4";
(-10,40 )*+{ \bullet  }="5";
(  15,55)*+{ \bullet }="7";
(65,55)*+{\bullet }="8";
 {\ar@{->}  "1";"2"_{\scriptscriptstyle\phi_{   g_4,g_3}}  };
{\ar@{<--}_{\scriptscriptstyle\phi_{  g_3,g_2 } } "3";"1" };
{\ar@{<-} "5";"1"_{\scriptscriptstyle \phi_{g_2,g_1}} };
{\ar@{<-} "4";"2"^{ \scriptscriptstyle\phi_{g_4 g_3, g_2}} };
 {\ar@{<--} "7";"3"_{\scriptscriptstyle \phi_{a } } };
{\ar@{-->}|-{  } "3";"4" };
{\ar@{<-} "8";"4"^{\scriptscriptstyle\phi_{g_4g_3g_2, g_1} } };
 {\ar@{<-}_{\scriptscriptstyle\phi_{g_3, g_2g_1} } "7";"5" };
 {\ar@{->} "7";"8"^{\scriptscriptstyle\phi_{g_4, g_3g_2 g_1}  } };
 {\ar@{==>}_{ A_1  } (-5,38)*{};(13,25)*{}} ;
 {\ar@{==>}_{    A_2  } (23,35)*{};(37,25)*{}} ;
  {\ar@{==>}_{   A_3  } (15,12)*{};(35,3)*{}} ;
  {\ar@{<-}^{\scriptscriptstyle\phi_{  g_2, g_1 } } "13";"2" };
 {\ar@{<-}^{\scriptscriptstyle\phi_{b } } "8";"13" };
{\ar@{->}|-{ } "5";"13" };{\ar@{=}|-{   } (5,20)*{};(29,20)*{}} ;
 {\ar@{=>}^{   A_2' } (45,35)*{};(62,18)*{}} ;
  {\ar@{=>}^{  A_1' } (17,52)*{};(35,43)*{}} ;
 \endxy
\end{equation}
\begin{rem} (1) In the upper boundaries of diagrams in (\ref{eq:Omega}), the number  of group elements in the second subscripts of $\phi_{*,*}$'s is
increasing: $ g_1$, $  g_2g_1$
  $g_3 g_2g_1$,  while in the lower boundaries  it is the number  of group elements in the first subscripts of $\phi_{*,*}$'s which are increasing: $ g_4$, $  g_4g_3$
  $g_4 g_3g_2$.

  (2) (\ref{eq:Omega}) or (\ref{eq:Omega-cube}) is similar to the pentagon condition of bicategories, but here we actually have more complicated whiskering (cf. (\ref{eq:Omega0})).
\end{rem}
Given a strict  $2$-category $\mathcal{V}$, there exists an {\it associated $3$-category $\mathcal{V}^*$} for which  $\mathcal{V}^*_0$ consists of   one object
$\mathcal{V}$,
$\mathcal{V}^*_1$ consists of all functors from $\mathcal{V}$ to $\mathcal{V}$, $\mathcal{V}^*_2$  consists of   all pseudonatural transformations
and $\mathcal{V}^*_3$ consists of  all modifications. This  is a $3$-category. Because   only $3$-representations  of a group in a  strict $3$-category are developed, we have to  consider a strict $3$-subcategory $\mathcal{W}$ of  $\mathcal{V}^*$ for a strict  $2$-categories $\mathcal{V}$.
We call  a $3$-representation of $G$ in such a  strict  $3$-subcategory $\mathcal{W} $ a  {\it strict $2$-categorical action of $G$ on $\mathcal{V}$}.
In particular,
we have  an endofunctor $\rho_g :\mathcal{V}\rightarrow \mathcal{V}$ for each $g \in G$,
a  pseudonatural transformation  $
     \phi_{h ,g }:\rho_h\#_0 \rho_g \Longrightarrow \rho_{h g}
$ for each $h, g\in G$, and
 a  modification $
     \Phi_{ g_3,g_2,g_1}
$ (the associator in (\ref{eq:3-cocycle}))
  for each $g_3,g_2,g_1\in G$. Here $\rho_h\#_0 \rho_g$ is the composition of functors:
   $$
    \rho_h\#_0 \rho_g(w):=\rho_h (\rho_g(w))$$
  for $w\in \mathcal{V}$. By the definition of $3$-representations,  the endofunctor  $\rho_g$, the pseudonatural transformation  $
     \phi_{h ,g }
$ and the modification  $
     \Phi_{ g_3,g_2,g_1}$ must all be invertible in $\mathcal{W} \subset\mathcal{V}^*$.

For example,
for the $2$-category $\mathcal{V}$ used  in the
$1$-dimensional $3$-representation  in Subsection \ref{sub:1-dim}, its $\mathcal{V}^*$ is a strict $3$-category. For the general action  of $G$ on a $2$-category $\mathcal{V}$, we need to develop $3$-representation    of a group  in  a  $\mathbf{Gray}$-category, since the semi-strictification of a $3$-category is a $\mathbf{Gray}$-category.

When a $2$-category $\mathcal{V}$ is viewed as a   $3$-category with only identity $3$-arrow, a $3$-representation of $G$ in $\mathcal{V}$ is   a {\it $2$-representation} if the
the associator $3$-isomorphism in
(\ref{eq:3-cocycle}) is the identity, so that the $3$-cocycle condition (\ref{eq:4-cocycle}) holds trivially. This coincides with the definition of the
$2$-representation in the strict sense in section 2.2 of \cite{GK}.  And for a category $\mathcal{V}$,  a $2$-representation of $G$ in the   $2$-category $\mathcal{V}^*$
is a  {\it categorical action of $G$ on $\mathcal{V}$}.

 \section{The $2$-categorical traces of   $3$-representations}
\subsection{The $2$-categorical trace of a $1$-endomorphism}
 Let $\mathcal{C}$ be a $3$-category, $x\in \mathcal{C}$ and $A:x\rightarrow x$ be a $1$-endomorphism. Then  $A$ is an object of the $2$-category
 ${\rm Hom}_\mathcal{C}  (x,x)$. The {\it $2$-categorical trace}
 of $A$ is defined as
 $$
    \mathbb{T}r_2 (A)=  {\rm Hom}_\mathcal{C} (1_x,A),
 $$
 which is a category. This  is a subcategory of $\mathcal{C}^{ ++}$.

 Let $A:x\rightarrow x$ be a $1$-endomorphism for $x\in \mathcal{C}_0$, and let the $1$-arrow $C:y\rightarrow x$
 be a quasi-inverse to a  $1$-arrow $B:x\rightarrow y$. Then for any $2$-arrow  $\chi:1_x\Longrightarrow A$ in $\mathbb{T}r_2 (A)_0$, the composition
 $$
     1_y\xrightarrow{u} C\#_0 B \xy0;/r.22pc/:
  (0,0)*+{ }="1";
(10,0)*+{ }="2";
{\ar@{=}^{ }   "1"+(0,0);"2"+(0,0)};
   \endxy C\#_0 1_x\#_0 B\xrightarrow{C\#_0 \chi\#_0 B} C\#_0 A\#_0 B
 $$
defines a functor
 $$ \begin{array}{rl}\Psi(C,B,u):\mathbb{T}r_2 (A)_0&\longrightarrow\mathbb{T}r_2 (C\#_0 A\#_0 B)_0,\\
 (\chi:1_x\Longrightarrow A)&\longmapsto u\#_1 [ C\#_0 \chi\#_0 B],
     \end{array}
 $$
 corresponding to the diagram
 $$
      \xy 0;/r.22pc/:
 (-15,0)*+{ {x}}="1";
(15,0)*+{ {x} }="2";
(-30,0)*+{ {y}}="3";
(30,0)*+{ {y} }="4";
{\ar@{->}^{C} "3"; "1"};
{\ar@{->}^{B} "2"; "4"};
{\ar@/^1.33pc/"1";"2"|-{1_x} };
{\ar@/_1.33pc/_{A} "1";"2"};
 {\ar@{=>}^{\chi} (0,3)*{}; (0,-4)*{}};
 {\ar@{=>}^{u} (0,13)*{}; (0,7)*{}};
 {\ar@/^3.33pc/^{1_y} "3";"4"},
\endxy;
 $$
 and for any $3$-arrow  $\gamma:\chi\xy0;/r.22pc/:
  (0,0)*+{ }="1";
(10,0)*+{ }="2";
{\ar@3{->}^{ }  "1" ;"2" };
   \endxy \chi'$ in $\mathbb{T}r_2 (A)_1$, we have
  $$ \begin{array}{rl}\mathbb{T}r_2 (A)_1&\longrightarrow\mathbb{T}r_2 ( C\#_0 A\#_0 B)_1,\\
 \gamma&\longmapsto u\#_1[ C\#_0 \gamma\#_0 B],
     \end{array}\qquad
       \xy 0;/r.22pc/:
 (-15,0)*+{ {x}}="1";
(15,0)*+{ {x} }="2";
(-30,0)*+{ {y}}="3";
(30,0)*+{ {y} }="4";
{\ar@{->}^{C} "3"; "1"};
{\ar@{->}^{B} "2"; "4"};
{\ar@/^1.33pc/"1";"2"|-{1_x} };
{\ar@/_1.33pc/_{A} "1";"2"};
 {\ar@{=>}^{u} (0,13)*{}; (0,7)*{}};
 {\ar@/^3.33pc/^{1_y} "3";"4"};(0,4)*+{}="A";
(0,-4)*+{}="B";
{\ar@{=>}@/_.5pc/_\chi "A"+(-1.33,0) ; "B"+(-.66,-.55)};
{\ar@{=}@/_.5pc/ "A"+(-1.33,0) ; "B"+(-1.33,0)};
{\ar@{=>}@/^.5pc/^{\chi'} "A"+(1.33,0) ; "B"+(.66,-.55)};
{\ar@{=}@/^.5pc/ "A"+(1.33,0) ; "B"+(1.33,0)};
{\ar@3{->} (-2,0)*{}; (3,0)*{}};
\endxy.
 $$
   \begin{prop}\label{prop:functor} $\Psi(C,B,u):\mathbb{T}r_2 (A) \longrightarrow\mathbb{T}r_2 (C\#_0 A\#_0 B)$ is a functor.
 \end{prop}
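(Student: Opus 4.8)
The plan is to verify the three defining properties of a functor for $\Psi:=\Psi(C,B,u)$: that it is well defined on objects and morphisms, that it preserves identity morphisms, and that it preserves composition. Recall that $\mathbb{T}r_2(A)={\rm Hom}_\mathcal{C}(1_x,A)$ is a subcategory of $\mathcal{C}^{++}$, so its objects are the $2$-arrows $\chi:1_x\Rightarrow A$, its morphisms are the $3$-arrows between such $\chi$, and composition of morphisms is the vertical composition $\#_2$ of $3$-arrows. For \emph{well-definedness}, note first that $C\#_0 1_x\#_0 B=C\#_0 B$ by the identity law (\ref{eq:identities}), so for an object $\chi$ the whiskered $2$-arrow $C\#_0\chi\#_0 B$ runs from $C\#_0 B$ to $C\#_0 A\#_0 B$; since $u:1_y\Rightarrow C\#_0 B$, the composite $\Psi(\chi)=u\#_1[C\#_0\chi\#_0 B]$ is a $2$-arrow $1_y\Rightarrow C\#_0 A\#_0 B$, i.e.\ an object of $\mathbb{T}r_2(C\#_0 A\#_0 B)$. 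For a morphism $\gamma$ from $\chi$ to $\chi'$, the image $\Psi(\gamma)=1_u\#_1[C\#_0\gamma\#_0 B]$ (whiskering from above by the $2$-arrow $u$) has $2$-source $\Psi(\chi)$ and $2$-target $\Psi(\chi')$, as required. Note that neither the invertibility of $u$ nor the quasi-inverse property of $C$ is needed here; those enter only when one later shows $\Psi$ is an equivalence.

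For \emph{preservation of identities}, I would use that whiskering, being horizontal composition $\#_0$ with identity cells, preserves identity $3$-arrows: $C\#_0 1_\chi\#_0 B=1_{C\#_0\chi\#_0 B}$. Then, since $\#_1$ also preserves identities in a strict $3$-category, $\Psi(1_\chi)=1_u\#_1 1_{C\#_0\chi\#_0 B}=1_{u\#_1[C\#_0\chi\#_0 B]}=1_{\Psi(\chi)}$. For \emph{preservation of composition}, take morphisms $\gamma$ from $\chi$ to $\chi'$ and $\gamma'$ from $\chi'$ to $\chi''$, with composite $\gamma\#_2\gamma'$ in $\mathbb{T}r_2(A)$, and set $P:=C\#_0\gamma\#_0 B$ and $Q:=C\#_0\gamma'\#_0 B$. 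The argument has two ingredients. First, whiskering distributes over $\#_2$, which is an instance of the compatibility condition (\ref{eq:asso2}) for $\#_0$ and $\#_2$: $C\#_0(\gamma\#_2\gamma')\#_0 B=P\#_2 Q$. Second, I apply (\ref{eq:asso2}) with $p=1$, $q=2$, $\beta=\gamma=1_u$, $\beta'=P$, $\gamma'=Q$, together with $1_u\#_2 1_u=1_u$, to obtain
\begin{equation*}
  \Psi(\gamma)\#_2\Psi(\gamma')=(1_u\#_1 P)\#_2(1_u\#_1 Q)=(1_u\#_2 1_u)\#_1(P\#_2 Q)=1_u\#_1(P\#_2 Q)=\Psi(\gamma\#_2\gamma').
\end{equation*}

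The main obstacle is purely in the bookkeeping around this last identity: one must check the composability hypotheses of (\ref{eq:asso2}) precisely, namely that the pairs $(1_u,P)$ and $(1_u,Q)$ are $1$-composable (the $1$-target $C\#_0 B$ of $u$ agrees with the $1$-source of $P$ and of $Q$) and that $(1_u,1_u)$ and $(P,Q)$ are $2$-composable (the latter because $P,Q$ are the $\#_2$-composable arrows coming from $\chi\to\chi'\to\chi''$), and one must keep straight that the whiskerings in play are the $3$-arrow whiskerings $1_{1_C}\#_0(-)\#_0 1_{1_B}$ and $1_u\#_1(-)$ rather than $2$-arrow operations. None of these steps is deep; each follows from the identity law (\ref{eq:identities}), associativity (\ref{eq:composition-associativity}), and the compatibility condition (\ref{eq:asso2}) of a strict $3$-category, but assembling them with the correct indices $p=1,q=2$ is the only place where care is required.
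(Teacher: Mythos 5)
Your proposal is correct and follows essentially the same route as the paper: the key step in both is that preservation of the composition $\#_2$ follows from the compatibility condition (\ref{eq:asso2}) (extended to the strict $3$-category with $p=1$, $q=2$), giving $\{u\#_1[C\#_0\gamma\#_0 B]\}\#_2\{u\#_1[C\#_0\gamma'\#_0 B]\}=u\#_1[C\#_0(\gamma\#_2\gamma')\#_0 B]$. You merely make explicit the well-definedness and identity-preservation checks that the paper leaves tacit.
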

 \begin{proof} For    $2$-arrows $ \chi, \chi',\widetilde{\chi}:1_x\Longrightarrow A$  and   $3$-arrows   $\gamma:\chi\xy0;/r.22pc/:
  (0,0)*+{ }="1";
(10,0)*+{ }="2";
{\ar@3{->}^{ }  "1" ;"2" };
   \endxy \chi'$, $\widetilde{\gamma}:\chi'\xy0;/r.22pc/:
  (0,0)*+{ }="1";
(10,0)*+{ }="2";
{\ar@3{->}^{ }  "1" ;"2" };
   \endxy  \widetilde{\chi}$, we have the composition ${\gamma}\#_2 \widetilde\gamma:\chi\xy0;/r.22pc/:
  (0,0)*+{ }="1";
(10,0)*+{ }="2";
{\ar@3{->}^{ }  "1" ;"2" };
   \endxy  \widetilde{\chi}$.  Then by using repeatedly the compatibility condition (\ref{eq:asso2})
 for
 compositions, we find
 $$\begin{array}{ll}
     \Psi(C,B,u)({\gamma})\#_2   \Psi(C,B,u)( \widetilde{\gamma})&=\{ u \#_1[C\#_0{\gamma}\#_0 B ]\}\#_2\{
u   \#_1  [ C\#_0\widetilde \gamma\#_0 B]\}\\&
      =u\#_1 [ C\#_0 ({\gamma}\#_2\widetilde\gamma)\#_0 B]\\&=\Psi(C,B,u)({\gamma}\#_2\widetilde\gamma).
  \end{array}$$
  Thus $\Psi(C,B,u) $ is a functor.
   \end{proof}

\subsection{The $2$-categorical trace $ {\rm \mathbb{T}r}_2\rho_f $}
Let $\rho$ be a $3$-representation of $G$ in a $3$-category $\mathcal{C}$. Fix an object $x$ in $\mathcal{C}$ that $G$ acts on. For $f\in G$, let $\rho_f:x \rightarrow x $ be a  $1$-isomorphism  in $\mathcal{C}$. Recall that ${\rm
\mathbb{T}r}_2\rho_f$   is a category whose objects are $2$-arrows with source $1_x$ and target $ \rho_f$  and the morphisms are  $3$-arrows  between them. In the sequel,  we will use the notation
$$
g^*:=   g^{-1}
$$
  for simplicity.
For any  $g  $ commuting with $f$ and a $2$-arrow $\chi: 1_{x}\xy0;/r.22pc/:
  (0,0)*+{ }="1";
(10,0)*+{ }="2";
{\ar@{=>}^{ }  "1" ;"2" };
   \endxy
\rho_f$ in $({\rm \mathbb{T}r}_2\rho_f)_0$, we define a $2$-arrow $\psi_g(\chi): 1_{x}\xy0;/r.22pc/:
  (0,0)*+{ }="1";
(10,0)*+{ }="2";
{\ar@{=>}^{ }  "1" ;"2" };
   \endxy
\rho_f$ by
\begin{equation}\label{eq:psi-g}
    \psi_g( \chi):= u_g \#_1\left[\rho_g\#_0\chi\#_0\rho_{g^*}\right ]\#_1\left[\phi_{g,f}\#_0\rho_{g^*}\right ]\#_1 \phi_{gf,g^*}.
\end{equation}
This is
given by the composition of $2$-arrows in the following   diagram
\begin{equation}\label{eq:psi-g-graph}
      \xy0;/r.22pc/:
 (-15,0)*+{ {x}}="1";
(15,0)*+{ {x} }="2";
(-30,0)*+{ x}="3";
(30,0)*+{ x}="4";(20,-5)*+{\scriptscriptstyle \phi_{gf,g^*}  }="5";
{\ar@{->}^{\rho_g} "3"; "1"};
{\ar@{->}^{\rho_{g^*}} "2"; "4"};
 {\ar@/^1.93pc/ "1";"2"|-{1_x}};
{\ar@/_1.93pc/ "1";"2"|-{\rho_f}}; {\ar@/^3.93pc/^{1_x} "3";"4"};
 {\ar@{=>}^{\chi} (0,5)*{}; (0,-5)*{}};
 {\ar@{=>}^{u_g} (0,16)*{}; (0,10)*{}};
 {\ar@/_2.93pc/ "3";"2"^{\rho_{gf}}};
 {\ar@/_3.93pc/ "3";"4"_{\rho_{ {gfg^*}}=\rho_f}};
    {\ar@{=>}_{\scriptscriptstyle \phi_{g,f}} (-15,-2)*{}; (-15,-10)*{} };
    {\ar@{=>}^{} ( 15,-2)*{}; ( 15,-12)*{} };
     \endxy,
 \end{equation}where   $ u_g=\phi_{g,g^*}^{-1}:1_x\Longrightarrow  \rho_{g }\rho_{g^*}$.
      For a  $3$-arrow  $\Theta:\chi\xy0;/r.22pc/:
  (0,0)*+{ }="1";
(6,0)*+{ }="2";
{\ar@3{->}    "1"+(0, 0);"2"+(0, 0)};
   \endxy \chi'$, we define $\psi_g(\Theta)$ as a  $3$-arrow  whiskered by corresponding  $2$-isomorphisms in (\ref{eq:psi-g-graph}). In other words,
\begin{equation}\label{eq:psi-g-1}
   \psi_g(\Theta)= u_g \#_1\left[\rho_g\#_0\Theta\#_0\rho_{g^*}\right ]\#_1\left[(\phi_{g,f}\#_0\rho_{g^*})\#_1 \phi_{gf,g^*}\right
   ]:\psi_g(\chi)\xy0;/r.22pc/:
  (0,0)*+{ }="1";
(6,0)*+{ }="2";
{\ar@3{->}    "1"+(0, 0);"2"+(0, 0)};
   \endxy \psi_g(\chi')
\end{equation}is a  $3$-arrow corresponding  to the diagram
$$
      \xy0;/r.22pc/:
 (-15,0)*+{ {x}}="1";
(15,0)*+{ {x} }="2";
(-30,0)*+{ x}="3";
(30,0)*+{ x}="4";(20,-5)*+{\scriptscriptstyle \phi_{gf,g^*}  }="5";
{\ar@{->}^{\rho_g} "3"; "1"};
{\ar@{->}^{\rho_{g^*}} "2"; "4"};
 {\ar@/^1.93pc/ "1";"2"|-{1_x}};
{\ar@/_1.93pc/ "1";"2"|-{\rho_f}}; {\ar@/^3.963pc/^{1_x} "3";"4"};
 {\ar@{=>}^{u_g} (0,16)*{}; (0,10)*{}};
 {\ar@/_2.93pc/ "3";"2"^{\rho_{gf}}};
 {\ar@/_3.93pc/ "3";"4"_{\rho_{ {gfg^*}}=\rho_f}};
    {\ar@{=>}_{\scriptscriptstyle \phi_{g,f}} (-15,-2)*{}; (-15,-10)*{} };
    {\ar@{=>}^{ } ( 15,-2)*{}; ( 15,-12)*{} };(0,5)*+{}="A";
(0,-5)*+{}="B";
{\ar@{=>}@/_.5pc/_\chi "A"+(-1.33,0) ; "B"+(-.66,-.55)};
{\ar@{=}@/_.5pc/ "A"+(-1.33,0) ; "B"+(-1.33,0)};
{\ar@{=>}@/^.5pc/^{\chi'} "A"+(1.33,0) ; "B"+(.66,-.55)};
{\ar@{=}@/^.5pc/ "A"+(1.33,0) ; "B"+(1.33,0)};
{\ar@3{->} (-2,0)*{}; (3,0)*{}};
     \endxy
 $$
 in the $3$-category $ \mathcal{C}  $.
Then $\psi_g$ defines an endofunctor $  \psi_g$ on ${\rm \mathbb{T}r}_2\rho_f  $ by the proof of  Proposition
\ref{prop:functor}.
Namely, we have
 $$
    \psi_g(\Theta\widetilde{\#}_0\Theta')=\psi_g(\Theta)\widetilde{\#}_0\psi_g(\Theta')  $$ for any  $3$-arrow $\Theta':\chi'\xy0;/r.22pc/:
  (0,0)*+{ }="1";
(10,0)*+{ }="2";
{\ar@3{->}^{ }  "1" ;"2" };
   \endxy\chi''$, where $\widetilde{\#}_0$ is the composition in the category $\mathcal{C}^{ ++}$ ($\widetilde{\#}_0= {\#}_{ 2} $).

In Section 3.4, we will construction
a natural isomorphism  $\Gamma_{h,g}:  \psi_h \circ\psi_g\longrightarrow\psi_{hg} $ for given $g,h\in C_G(f)$. It gives us  natural isomorphisms $\Gamma_{g^*,g}:  \psi_{g^*} \circ\psi_g\longrightarrow\psi_{1} $ and $\Gamma_{g,g^*}:\psi_g\circ\psi_{g^*}\longrightarrow\psi_{1} $. Thus $\psi_g$ for each $g\in C_G(f)$ is an equivalence of  the category ${\rm \mathbb{T}r}_2\rho_f  $.
\subsection{The adjoint $2$-isomorphisms}\label{adjoint}
For a $2$-isomorphism $
      \xy 0;/r.22pc/:
 (10,0)*+{ y}="1";
(-10,0)*+{{x}}="5";
 {\ar@/^1.33pc/^{\chi_1} "5";"1"};
  {\ar@/_1.33pc/_{\chi_2} "5";"1"}; {\ar@{=>}^{\phi} (0,5)*{}; (0,-5)*{}};
  \endxy
$ in a $2$-category $\mathcal{V}$,
    we define the {\it adjoint $2$-isomorphism} $\phi^\dag$ to be $\xy 0;/r.22pc/:
 (10,0)*+{{x}}="1";
(-10,0)*+{y}="5";
 {\ar@/^1.33pc/^{\chi_1^{-1}}"5";"1"};
  {\ar@/_1.33pc/_{\chi_2^{ -1}}"5";"1"}; {\ar@{=>}^{\phi^\dag} (0,5)*{}; (0,-5)*{}};
  \endxy $ by the composition of  arrows
   \begin{equation}\label{eq:adjoint}
       \xy 0;/r.22pc/:(30,0)*+{ {x}}="11";
(-30,0)*+{ y}="15";
 (10,0)*+{y}="1";
(-10,0)*+{ {x}}="5";
 {\ar@/^1.33pc/^{\chi_2} "5";"1"};
  {\ar@/_1.33pc/_{\chi_1} "5";"1"}; {\ar@{=>}^{\phi^{-1}} (0,5)*{}; (0,-5)*{}};
  {\ar@{->}^{ \chi_1^{-1} } "15";"5" };
  {\ar@{->}^{ \chi_2^{ -1} } "1";"11" };
  \endxy .
    \end{equation}This is a $2$-isomorphism
with  inverted $1$-source and $1$-target. This operation will be used later.
    See also section 2 of \cite{Ma} for the definition of    similar adjoint $2$-arrows, but   $\phi^{-1}$ in (\ref{eq:adjoint}) is replaced there by $\phi$.

 \begin{prop}\label{prop:adjoint} (1)   For any pair of  $2$-isomorphisms $
      \xy 0;/r.22pc/:
 (8,0)*+{ y}="1";
(-8,0)*+{{x}}="5";
 {\ar@/^1.33pc/^{\chi_1} "5";"1"};
  {\ar@/_1.33pc/_{\chi_2} "5";"1"}; {\ar@{=>}^{\phi} (0,5)*{}; (0,-5)*{}};
  \endxy
$ and  $
      \xy 0;/r.22pc/:
 (8,0)*+{ y}="1";
(-8,0)*+{x}="5";
 {\ar@/^1.33pc/^{\chi_2} "5";"1"};
  {\ar@/_1.33pc/_{\chi_3} "5";"1"}; {\ar@{=>}^{\psi} (0,5)*{}; (0,-5)*{}};
  \endxy
$, we have $(\phi \#_1\psi)^\dag=\phi^\dag\#_1\psi^\dag $.

(2)  For any $1$-isomorphism $\chi_0:z\longrightarrow x$,  we have  $(\chi_0\#_0\phi)^\dag=\phi^\dag\#_0\chi_0^{-1}$; and     for $1$-isomorphism $\widetilde{\chi}_0 :y\longrightarrow z$, we have   $(\phi\#_0\widetilde{\chi}_0 )^\dag=\widetilde{\chi}_0^{ -1}\#_0\phi^\dag$.

(3) For a $2$-isomorphism $
      \xy 0;/r.22pc/:
 (8,0)*+{ z}="1";
(-8,0)*+{y}="5";
 {\ar@/^1.33pc/^{\widetilde{\chi}_1 } "5";"1"};
  {\ar@/_1.33pc/_{\widetilde{\chi}_2 } "5";"1"}; {\ar@{=>}^{\widetilde{\phi} } (0,5)*{}; (0,-5)*{}};
  \endxy
$, we have   $(\phi\#_0\widetilde{\phi} )^\dag  =\widetilde{\phi}^{ \dag}  \#_0\phi^\dag$, i.e.,   $\xy 0;/r.22pc/:
 (8,0)*+{ y}="1";
(-8,0)*+{z}="5";(24,0)*+{ x}="6";
 {\ar@/^1.33pc/^{\widetilde{\chi}_1^{ -1}} "5";"1"};
  {\ar@/_1.33pc/_{\widetilde{\chi}_2^{ -1}} "5";"1"}; {\ar@{=>}^{\widetilde{\phi}^{ \dag}} (0,5)*{};
  (0,-5)*{}};{\ar@/^1.33pc/^{\chi_1^{-1}} "1";"6"};
  {\ar@/_1.33pc/_{\chi_2^{-1}} "1";"6"}; {\ar@{=>}^{\phi^\dag} (16,5)*{}; (16,-5)*{}};
  \endxy$.
\end{prop}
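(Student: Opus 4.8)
The plan is to dispatch the three parts in the order (2), (1), (3): part (3) will follow formally from (1) and (2). In every case I unwind the adjoint through its defining composite $\phi^\dag=\chi_1^{-1}\#_0\phi^{-1}\#_0\chi_2^{-1}$ from (\ref{eq:adjoint}), and I use throughout that inversion reverses composites, namely $(\chi_0\#_0\chi_i)^{-1}=\chi_i^{-1}\#_0\chi_0^{-1}$, $(\chi_0\#_0\phi)^{-1}=\chi_0\#_0\phi^{-1}$ and $(\phi\#_1\psi)^{-1}=\psi^{-1}\#_1\phi^{-1}$, together with the identity axioms (\ref{eq:identities}) and the associativity (\ref{eq:composition-associativity}).

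For (2) I simply substitute. Writing the first identity out, $(\chi_0\#_0\phi)^\dag=(\chi_0\#_0\chi_1)^{-1}\#_0(\chi_0\#_0\phi)^{-1}\#_0(\chi_0\#_0\chi_2)^{-1}=(\chi_1^{-1}\#_0\chi_0^{-1})\#_0(\chi_0\#_0\phi^{-1})\#_0(\chi_2^{-1}\#_0\chi_0^{-1})$. The total $1$-arrow whiskering $\phi^{-1}$ on the left is $\chi_1^{-1}\#_0\chi_0^{-1}\#_0\chi_0=\chi_1^{-1}$, since $\chi_0^{-1}\#_0\chi_0=1_x$ by (\ref{eq:identities}), so the expression collapses to $\chi_1^{-1}\#_0\phi^{-1}\#_0\chi_2^{-1}\#_0\chi_0^{-1}=\phi^\dag\#_0\chi_0^{-1}$. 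The second identity is the mirror computation, collapsing $\widetilde\chi_0\#_0\widetilde\chi_0^{-1}=1_y$ on the right.

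Part (1) is where the real work lies. Both sides are $2$-isomorphisms $\chi_1^{-1}\Rightarrow\chi_3^{-1}$, so it suffices to identify them, and I would do this in two steps. First I flatten the vertical composite $\phi^\dag\#_1\psi^\dag=[\chi_1^{-1}\#_0\phi^{-1}\#_0\chi_2^{-1}]\#_1[\chi_2^{-1}\#_0\psi^{-1}\#_0\chi_3^{-1}]$ into the single horizontal string $\chi_1^{-1}\#_0\phi^{-1}\#_0\chi_2^{-1}\#_0\psi^{-1}\#_0\chi_3^{-1}$. This is one application of the compatibility condition (\ref{eq:asso2}) with $p=1,q=0$: I write $\phi^\dag=(\chi_1^{-1}\#_0\phi^{-1})\#_0 1_{\chi_2^{-1}}$ and $\psi^\dag=1_{1_y}\#_0(\chi_2^{-1}\#_0\psi^{-1}\#_0\chi_3^{-1})$, so that (\ref{eq:asso2}) turns the vertical composite of these horizontal composites into a horizontal composite of vertical composites, after which the identity $2$-arrows $1_{\chi_2^{-1}}$ and $1_{1_y}$ are absorbed by (\ref{eq:identities}). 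Second I invoke the key identity $\phi^{-1}\#_0\chi_2^{-1}\#_0\psi^{-1}=\psi^{-1}\#_1\phi^{-1}$, asserting that horizontal composition through the inverse $1$-cell $\chi_2^{-1}$ realizes the vertical composite; it follows from the interchange law (\ref{eq:interchanging-law}) applied to the horizontal pair $(\phi^{-1}\#_0\chi_2^{-1},\psi^{-1})$, using $\chi_2^{-1}\#_0\chi_2=1_y$. Substituting this into the middle of the flattened string and recognizing $\psi^{-1}\#_1\phi^{-1}=(\phi\#_1\psi)^{-1}$ yields $\chi_1^{-1}\#_0(\phi\#_1\psi)^{-1}\#_0\chi_3^{-1}=(\phi\#_1\psi)^\dag$. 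I expect the flattening to be the main obstacle: the two vertical factors carry the middle whiskering $\chi_2^{-1}$ on opposite sides, so no common whiskering can be factored out, and the two sides can only be reconciled by routing the argument through (\ref{eq:asso2}) and the key identity.

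Finally, for (3) I decompose the horizontal composite by the interchange law (\ref{eq:interchanging-law}) as $\phi\#_0\widetilde\phi=(\phi\#_0\widetilde\chi_1)\#_1(\chi_2\#_0\widetilde\phi)$, apply part (1) to distribute the adjoint across $\#_1$, and then apply the two halves of part (2) to each factor, giving $(\phi\#_0\widetilde\phi)^\dag=(\widetilde\chi_1^{-1}\#_0\phi^\dag)\#_1(\widetilde\phi^\dag\#_0\chi_2^{-1})$. Reading (\ref{eq:interchanging-law}) backwards on the horizontal pair $(\widetilde\phi^\dag,\phi^\dag)$ identifies the right-hand side with $\widetilde\phi^\dag\#_0\phi^\dag$, which is the claim.
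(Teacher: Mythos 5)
Your proposal is correct and follows essentially the same route as the paper's proof: unwind $\phi^\dag$ through its defining composite, cancel the inverse $1$-cells, and use the interchange law (\ref{eq:interchanging-law}) to recognize the vertical composite in (1), with (3) reduced to (1) and (2) via the interchange decomposition of $\phi\#_0\widetilde{\phi}$. The only differences are cosmetic --- you justify the flattening step in (1) explicitly via the compatibility condition (\ref{eq:asso2}), and in (3) you use the mirror decomposition $(\phi\#_0\widetilde{\chi}_1)\#_1(\chi_2\#_0\widetilde{\phi})$ where the paper uses $(\chi_1\#_0\widetilde{\phi})\#_1(\phi\#_0\widetilde{\chi}_2)$; both are valid instances of the same law.
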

  \begin{proof} (1) $\phi^\dag\#_1\psi^\dag=(\phi \#_1\psi)^\dag  $ follows from
$$
   \xy 0;/r.26pc/:
(40,0)*+{  y}="3";
(56,0)*+{  x}="4";
  (24,0)*+{    x }="11";
(-24,0)*+{   y}="15";
 (8,0)*+{  y}="1";
(-8,0)*+{   {x}}="5";
{\ar@/^1.33pc/^{\chi_2} "5";"1"};
  {\ar@/_1.33pc/_{\chi_1} "5";"1"}; {\ar@{=>}^{ \scriptscriptstyle \phi^{-1}} (0,4)*{}; (0,-5)*{}};
  {\ar@{->}^{ \chi_1^{-1} } "15";"5" };
  {\ar@{->}^{ \chi_2^{ -1} } "1";"11" };
  {\ar@/^1.33pc/^{\chi_3} "11";"3"};
  {\ar@/_1.33pc/_{\chi_2} "11";"3"};
   {\ar@{=>}^{\scriptscriptstyle  \psi^{-1}} (32,4)*{}; (32,-5)*{}};
  {\ar@{->}^{ \chi_3^{-1} } "3";"4" };  \endxy
  \xy0;/r.22pc/:
  (0,0)*+{ }="1";
(15,0)*+{ }="2";
{\ar@{=}^{ }   "1"+(0,0);"2"+(0,0)};
   \endxy \xy 0;/r.23pc/:(24,0)*+{  {x}}="11";
(-24,0)*+{  y}="15";
 (8,0)*+{  y}="1";
(-8,0)*+{   {x}}="5";
 {\ar@/^1.33pc/|-{\chi_2} "5";"1"};
  {\ar@/_1.33pc/_{\chi_1} "5";"1"}; {\ar@{=>}^{\scriptscriptstyle \phi^{-1}} (0,4)*{}; (0,-5)*{}};
  {\ar@{->}^{ \chi_1^{-1} } "15";"5" };
  {\ar@{->}^{ \chi_3^{ -1} } "1";"11" };
  {\ar@/^3.33pc/^{\chi_3} "5";"1"};{\ar@{=>}|-{\scriptscriptstyle\psi^{-1}} (0,14)*{}; (0,7 )*{}};
  \endxy
    $$
    by $\xy 0;/r.27pc/: (-40,0)*+{  x}="25";
(-24,0)*+{   y}="15";
 (8,0)*+{  y}="1";
(-8,0)*+{   {x}}="5";
 {\ar@/^1.33pc/^{\chi_3} "5";"1"};
  {\ar@/_1.33pc/_{\chi_2} "5";"1"}; {\ar@{=>}^{ \scriptscriptstyle \psi^{-1}} (0,4)*{}; (0,-4)*{}};
  {\ar@{->}^{ \chi_2^{-1} } "15";"5" };  {\ar@{->}^{ \chi_2 } "25";"15" };
   \endxy \xy0;/r.22pc/:
  (0,0)*+{ }="1";
(8,0)*+{ }="2";
{\ar@{=}^{ }   "1"+(0,0);"2"+(0,0)};
   \endxy \xy 0;/r.27pc/:
 (8,0)*+{  y}="1";
(-8,0)*+{   {x}}="5";
 {\ar@/^1.33pc/^{\chi_3} "5";"1"};
  {\ar@/_1.33pc/_{\chi_2} "5";"1"}; {\ar@{=>}^{ \scriptscriptstyle\psi^{-1}} (0,4)*{}; (0,-4)*{}};
     \endxy $ and the interchange  law
(\ref{eq:interchanging-law}) for horizontal compositions.

(2) follows from the fact that $(\chi_0\#_0\phi)^\dag$ is
     $$
   \xy 0;/r.27pc/:
(40,0)*+{  y}="3";
(56,0)*+{  x}="4";(72,0)*+{z}="14";
  (24,0)*+{    x }="11";
(-24,0)*+{   y}="15";
 (8,0)*+{  z}="1";
(-8,0)*+{   {x}}="5";
{\ar@{->}^{ \chi_0^{-1} } "5";"1"};
   {\ar@{->}^{ \chi_1^{-1} } "15";"5" };
  {\ar@{->}^{ \chi_0 } "1";"11" };
  {\ar@/^1.33pc/^{\chi_2} "11";"3"};
  {\ar@/_1.33pc/_{\chi_1} "11";"3"};
   {\ar@{=>}^{\scriptscriptstyle  \phi^{-1}} (32,4)*{}; (32,-4)*{}};
  {\ar@{->}^{ \chi_2^{-1} } "3";"4" };   {\ar@{->}^{ \chi_0^{-1} } "4";"14" };
  \endxy,
    $$
since $\chi_0^{-1}\#_0\chi_0 $ is equal to the identity $1_x$.

 (3)  Note that $ \phi\#_0\widetilde{\phi} =(\chi_1\#_0\widetilde{\phi }) \#_1(\phi\#_0\widetilde{\chi}_2 ) $ by using the  interchange  law (\ref{eq:interchanging-law}) .    We see that
 $$\begin{array}{ll}
   \left (\phi\#_0\widetilde{\phi}\right )^\dag &=\left(\chi_1\#_0\widetilde{\phi} \right)^\dag\#_1\left(\phi\#_0\widetilde{\chi}_2 \right)^\dag=\left(\widetilde{\phi}^{ \dag}\#_0\chi_1^{ -1}\right)\#_1\left(\widetilde{\chi}_2^{ -1}\#_0\phi^\dag\right)
    =\widetilde{\phi}^{ \dag}  \#_0\phi^\dag
\end{array} $$
by using (1), (2) and  the  interchange  law (\ref{eq:interchanging-law}) again.
     \end{proof}
\subsection{The  categorical action of the centralizer of $f$ on   $ {\rm \mathbb{T}r}_2\rho_f $} To construct a  categorical action of
the centralizer $C_G(f)$ of $f$   on  the category ${\rm \mathbb{T}r}_2\rho_f$,
let us write down the composition law for the functors $\psi_h$ and $  \psi_g$,
 $$
    \psi_h \circ\psi_g:{\rm \mathbb{T}r}_2\rho_f \longrightarrow{\rm \mathbb{T}r}_2\rho_f,$$
where $h ,g \in C_G(f)$. For a fixed $\chi\in ({\rm \mathbb{T}r}_2\rho_f)_0$ and $\Theta\in ({\rm \mathbb{T}r}_2\rho_f)_1$,   by using the definition (\ref{eq:psi-g})-(\ref{eq:psi-g-1}) of $\psi_*$ twice, we see that
$\psi_h \circ\psi_g(\chi)=\psi_h (\psi_g(\chi))$ is the composition of  $2$-arrows  in $\mathcal{C}$ in the following diagram:
\begin{equation}\label{eq:2-composition}
      \xy0;/r.22pc/:
 (-15,0)*+{ {x}}="1";
(15,0)*+{ {x} }="2";
(-30,0)*+{ x}="3";
(30,0)*+{ x}="4";
(-45,0)*+{ x}="5";
(45,0)*+{ x}="6";
{\ar@{->}^{\rho_g} "3"; "1"};
{\ar@{->}^{\rho_{g^*}} "2"; "4"};
{\ar@{->}^{\rho_h} "5"; "3"};
{\ar@{->}^{\rho_{h^*}} "4"; "6"};
{\ar@/^2.23pc/ "1";"2"_{1_x}};
{\ar@/_2.23pc/^{\rho_f} "1";"2"};
 {\ar@{=>}^{\chi} (0,5)*{}; (0,-5)*{}}; {\ar@{=>}^{u_h} (0,28)*{}; (0,22)*{}};
 {\ar@{=>}^{u_g} (0,16)*{}; (0,10)*{}};
 {\ar@/^4.33pc/ "3";"4"|-{1_x}}; {\ar@/^6.33pc/ "5";"6"^{1_x}};
 {\ar@/_3.33pc/^{\rho_{gf}} "3";"2"};
 {\ar@/_4.33pc/ "3";"4"^{\rho_{f}}};
  {\ar@/_5.33pc/^{\rho_{hf}} "5";"4"};
   {\ar@/_6.33pc/_{\rho_{f}} "5";"6"};
   {\ar@{=>}_{\scriptscriptstyle \phi_{g,f}} (-15,-1.5)*{}; (-15,-11)*{} };
    {\ar@{=>}^{} ( 15,-2)*{}; ( 15,-11.5)*{} };
     {\ar@{=>}_{\scriptscriptstyle \phi_{h,f}} (-30,-2)*{}; (-30,-12.5)*{} };
     {\ar@{=>}^{} ( 30,-2)*{}; ( 30,-13)*{} };
     (36,-5)*+{\scriptscriptstyle \phi_{h f,h^*}}="06";
     (21,-5)*+{\scriptscriptstyle \phi_{gf,g^*}}="05";
\endxy,
 \end{equation}and $\psi_h \circ\psi_g(\Theta)=\psi_h (\psi_g(\Theta))$ is  a $3$-arrow  in $\mathcal{C}$   defined similarly. Recall that we assume $\rho_{g1}=\rho_{g }1_x$ and $\rho_{h1}=\rho_{h }1_x$. The upper half part of (\ref{eq:2-composition}) is the same as the
 lower half with $f$ replaced by $1_x$ and $2$-isomorphisms inverted:
 \begin{equation}\label{eq:upper-half}
      \xy0;/r.22pc/:
 (-15,0)*+{ {x}}="1";
(15,0)*+{ {x} }="2";
(-30,0)*+{ x}="3";
(30,0)*+{ x}="4";
(-45,0)*+{ x}="5";
(45,0)*+{ x}="6";
{\ar@{->}_{\scriptscriptstyle  \rho_g} "3"; "1"};
{\ar@{->}_{\scriptscriptstyle   \rho_{g^*}} "2"; "4"};
{\ar@{->}_{\scriptscriptstyle  \rho_h} "5"; "3"};
{\ar@{->}_{\scriptscriptstyle  \rho_{h^*}} "4"; "6"};
{\ar@/^2.03pc/_{1_x} "1";"2"};
 {\ar@{=>}^{} (30,12.5)*{}; (30,2)*{}}; {\ar@{=>}_{\scriptscriptstyle  \phi_{h,1}^{-1}} (-30,13)*{}; (-30,1.5)*{}};
 {\ar@{=>}^{\scriptscriptstyle  \phi_{g1,g^*}^{-1}} (15,11.5)*{}; (15,1.5)*{}}; {\ar@{=>}_{\scriptscriptstyle  \phi_{g,1}^{-1}} (-15,10)*{}; (-15,1.5)*{}};
 {\ar@/^4.33pc/_{1_x} "3";"4"}; {\ar@/^6.33pc/^{1_x} "5";"6"};
  {\ar@{-->}@/^3.03pc/_{\scriptscriptstyle   \rho_{g1}} "3";"2"};
  {\ar@{-->}@/^5.33pc/_{\scriptscriptstyle  \rho_{h1}} "5";"4"};(36,5)*+{ \scriptscriptstyle \phi_{h1,h^*}^{-1}}="06";
  \endxy,
 \end{equation}namely, we have $u_h=\phi_{h1,h^*}^{-1}\#_1[\phi_{h,1}^{-1}\#_0\rho_{h^*}]$ and similar identity for $ u_g $. Note that $\phi_{h,1}$ and $\phi_{g,1}$ are identities by our assumptions in Remark 2.2 (1).

Now let us write down the
 natural isomorphism
 $$
    \Gamma_{h,g}:  \psi_h \circ \psi_g\longrightarrow\psi_{hg}
 $$
 between functors on the category   ${\rm \mathbb{T}r}_2\rho_f$.
 The lower half of diagram (\ref{eq:2-composition}) is
 \begin{equation}\label{eq:psi-g-h}
      \xy 0;/r.22pc/:
 (-15,0)*+{ {x}}="1";
(15,0)*+{ {x} }="2";
(-30,0)*+{ x}="3";
(30,0)*+{ x}="4";
(-45,0)*+{ x}="5";
(45,0)*+{ x}="6";
{\ar@{->}^{\rho_g} "3"; "1"};
{\ar@{-->}^{\rho_{g^*}} "2"; "4"};
{\ar@{-->}^{\rho_h} "5"; "3"};
{\ar@{->}^{\rho_{h^*}} "4"; "6"};
 {\ar@{->}^{\rho_f} "1";"2"};
 {\ar@{-->}@/_2.33pc/^{\rho_{gf}} "3";"2"};
 {\ar@{-->}@/_3.33pc/^{\rho_{\underline{gg^*}}} "3";"4"};
  {\ar@{-->}@/_4.33pc/^{\rho_{\underline{hg^*}}} "5";"4"};
   {\ar@/_5.33pc/_{\rho_{\underline{hh^*}}} "5";"6"};
  \endxy
 \end{equation} Here and in the following, for simplicity, we will use the notation
 $$
    \rho_{\underline{g_1g_2}}:=\rho_{ {g_1 \ldots g_2}} ,
 $$
i.e., we omit
 the group elements
 between $g_1$ and $g_2$ in the sequence $h,g,f,g^*,h^*$ in diagram (\ref{eq:psi-g-h}).

 Recall that the associator $3$-isomorphism  $\Phi_{g_3,g_2,g_1 }$ in (\ref{eq:3-cocycle})-(\ref{eq:diagonal}) can be drawn   in the  form (\ref{eq:associator}).
By definition, the $3$-isomorphism
  \begin{equation}\label{eq:hat-Lambda-1}
   \hat \Lambda_1= \gamma_1\#_1[ \Phi_{h,gf,g^*}\#_0 \rho_{h^*}]\#_1  \gamma_2, \end{equation}   is the associator $\Phi_{h,gf,g^*}\#_0 \rho_{h^*}$   whiskered by two $2$-isomorphisms
    \begin{equation}\label{eq:gamma-1-2} \begin{split}
 \gamma_1=[\rho_h\#_0\phi_{g,f}\#_0(\rho_{g^*}\rho_{h^*})]:&\qquad     \xy 0;/r.22pc/:
 (-15,0)*+{ {x}}="1";
(15,0)*+{ {x} }="2";
(-30,0)*+{ x}="3";
(30,0)*+{ x}="4";
(-45,0)*+{ x}="5";
(45,0)*+{ x}="6";
{\ar@{->}^{\rho_g} "3"; "1"};
{\ar@{->}^{\rho_{h^*}} "4"; "6"};
 {\ar@{->}^{\rho_f} "1";"2"};
 {\ar@{->}@/_2.33pc/_{\rho_{gf}} "3";"2"};{\ar@{->}^{\rho_{g^*}} "2"; "4"};
{\ar@{->}^{\rho_h} "5"; "3"};
  \endxy,\\
  \gamma_2= \phi_{\underline{hg^*},h^*} :& \qquad    \xy 0;/r.22pc/:
 (30,0)*+{ x}="4";
(-45,0)*+{ x}="5";
(45,0)*+{ x}="6";
{\ar@{->}^{\rho_{h^*}} "4"; "6"};
  {\ar@{->}@/_4.33pc/^{\rho_{\underline{hg^*}}} "5";"4"};
   {\ar@/_5.33pc/_{\rho_{\underline{hh^*}}} "5";"6"};
  \endxy,
       \end{split}  \end{equation}
 from above and below,
    respectively. This replaces the diagonal $\rho_{\underline{gg^*}}$ of the   dotted quadrilateral in
diagram (\ref{eq:psi-g-h}) by the wavy
diagonal $\rho_{\underline{hf} }$ of the   same quadrilateral in the following  diagram:
 \begin{equation}\label{eq:psi-g-h-2}
      \xy 0;/r.22pc/:
 (-15,0)*+{ {x}}="1";
(15,0)*+{ {x} }="2";
(-30,0)*+{ x}="3";
(30,0)*+{ x}="4";
(-45,0)*+{ x}="5";
(45,0)*+{ x}="6";
{\ar@{-->}^{\rho_g} "3"; "1"};
{\ar@{->}^{\rho_{g^*}} "2"; "4"};
{\ar@{-->}^{\rho_h} "5"; "3"};
{\ar@{->}^{\rho_{h^*}} "4"; "6"};
 {\ar@{-->}^{\rho_f} "1";"2"};
 {\ar@{-->}@/_2.33pc/^{\rho_{gf}} "3";"2"};
 {\ar@{~>}@/_3.33pc/^{\rho_{\underline{hf} }} "5";"2"};
  {\ar@/_4.33pc/^{\rho_{\underline{hg^*}}} "5";"4"};
   {\ar@/_5.33pc/_{\rho_{\underline{hh^*}}} "5";"6"};
  \endxy.
 \end{equation}
 $  \hat \Lambda_1 $ in
(\ref{eq:hat-Lambda-1}) is the following   $3$-isomorphism
 \begin{equation}\label{eq:lambda-1-3-dim}
       \xy 0;/r.22pc/:
(0,10)*{};
(0,-10)*{};
(-20,0)*+{x}="1";
(0,0)*+{x}="2";
{\ar@/^1.33pc/ "1";"2" };
{\ar@/_3.73pc/  "1";"2"};{\ar@/^3.73pc/  "1";"2"};
{\ar@{=>}^{\gamma_2} (-10,-8) ; (-10,-16)};{\ar@{<=}_{\gamma_1} (-10, 8) ; (-10, 16)};
{\ar@/_1.33pc/"1";"2"  };
(-10,4)*+{}="A";
(-10,-4)*+{}="B";
{\ar@{=>}@/_.5pc/_\chi "A"+(-1.33,0) ; "B"+(-.66,-.55)};
{\ar@{=}@/_.5pc/ "A"+(-1.33,0) ; "B"+(-1.33,0)};
{\ar@{=>}@/^.5pc/^{\chi'} "A"+(1.33,0) ; "B"+(.66,-.55)};
{\ar@{=}@/^.5pc/ "A"+(1.33,0) ; "B"+(1.33,0)};
{\ar@3{->} (-12,0)*{}; (-7,0)*{}};
\endxy
 \end{equation}
 where $\chi$ is the $2$-arrow corresponding to the   dotted quadrilateral in
diagram (\ref{eq:psi-g-h}), $\chi'$ is the $2$-arrow corresponding to the  same   quadrilateral in
diagram (\ref{eq:psi-g-h-2}) with the diagonal changed, and $2$-arrows $\gamma_1$ and  $\gamma_2$ are given by (\ref{eq:gamma-1-2}).

The $3$-isomorphism  \begin{equation}\label{eq:hat-Lambda-2}
  \hat  \Lambda_2=[\Phi_{h,g,f } \#_0(\rho_{g^*}\rho_{h^*})]\#_1 \{[ \phi_{\underline{hf},g^*}\#_0 \rho_{h^*} ] \#_1
  \phi_{\underline{hg^*},h^*}\},\end{equation} as a whiskered associator (\ref{eq:associator}),
then  changes the diagonal $\rho_{gf}$  of the   dotted-wavy quadrilateral in   diagram (\ref{eq:psi-g-h-2}) to the wavy
diagonal $\rho_{hg }$   of the   same quadrilateral in the following  diagram:

 \begin{equation}\label{eq:psi-g-h-3}
      \xy 0;/r.22pc/:
 (-15,0)*+{ {x}}="1";
(15,0)*+{ {x} }="2";
(-30,0)*+{ x}="3";
(30,0)*+{ x}="4";
(-45,0)*+{ x}="5";
(45,0)*+{ x}="6";
{\ar@{->}^{\rho_g} "3"; "1"};
{\ar@{-->}^{\rho_{g^*}} "2"; "4"};
{\ar@{->}^{\rho_h} "5"; "3"};
{\ar@{-->}^{\rho_{h^*}} "4"; "6"};
 {\ar@{->}^{\rho_f} "1";"2"};
 {\ar@{~>}@/_1.33pc/_{\rho_{hg }} "5";"1"};
 {\ar@{-->}@/_3.33pc/^{\rho_{\underline{hf} }} "5";"2"};
  {\ar@{-->}@/_4.33pc/^{\rho_{\underline{hg^*}}} "5";"4"};
   {\ar@{-->}@/_5.33pc/_{\rho_{\underline{hh^*}}} "5";"6"};
   \endxy.
 \end{equation}
 Similarly, the $3$-isomorphism
 \begin{equation}\label{eq:hat-Lambda-3'}
   \hat\Lambda_3=\{[\phi_{h,g }\#_0(\rho_f\rho_{g^*}\rho_{h^*})]\#_1[ \phi_{hg,f} \#_0( \rho_{g^*}\rho_{h^*})]\}\#_1 \Phi^{-1}_{\underline{hf},
   g^*,h^* } , \end{equation} which is the whiskered  associator $\Phi^{-1}_{\underline{hf},
   g^*,h^* } $,  changes the diagonal $\rho_{hg^* }$  of the  dotted  quadrilateral in
   diagram (\ref{eq:psi-g-h-3}) to the wavy
diagonal $\rho_{ g^* h^*  }$  of the   same quadrilateral  in the following  diagram:
 \begin{equation}\label{eq:psi-g-h-4}
      \xy 0;/r.22pc/:
 (-15,0)*+{ {x}}="1";
(15,0)*+{ {x} }="2";
(-30,0)*+{ x}="3";
(30,0)*+{ x}="4";
(-45,0)*+{ x}="5";
(45,0)*+{ x}="6";
{\ar@{->}^{\rho_g} "3"; "1"};
{\ar@{->}^{\rho_{g^*}} "2"; "4"};
{\ar@{->}^{\rho_h} "5"; "3"};
{\ar@{->}^{\rho_{h^*}} "4"; "6"};
 {\ar@{->}^{\rho_f} "1";"2"};
 {\ar@/_1.33pc/_{\rho_{hg }} "5";"1"};
 {\ar@{->}@/_3.33pc/^{\rho_{\underline{hf} }} "5";"2"};
  {\ar@{~>}@/_1.33pc/_{\rho_{ g^*h^*}} "2";"6"};
   {\ar@{->}@/_5.33pc/_{\rho_{\underline{hh^*}}} "5";"6"};
  \endxy.
 \end{equation}

Recall that the upper half  of diagram (\ref{eq:2-composition}) is the same as the  lower half with $f$ replaced by $1$ and $2$-isomorphisms inverted. So by the corresponding $3$-isomorphisms, denoted by   $ \hat\Lambda_1', \hat \Lambda_2', \hat \Lambda_3'$,
the upper half   of (\ref{eq:upper-half}) is changed to
 \begin{equation}\label{eq:psi-g-h-4-upper}
      \xy 0;/r.22pc/:
 (-15,0)*+{ {x}}="1";
(15,0)*+{ {x} }="2";
(-30,0)*+{ x}="3";
(30,0)*+{ x}="4";
(-45,0)*+{ x}="5";
(45,0)*+{ x}="6";
{\ar@{->}_{\rho_g} "3"; "1"};
{\ar@{->}_{\rho_{g^*}} "2"; "4"};
{\ar@{->}_{\rho_h} "5"; "3"};
{\ar@{->}_{\rho_{h^*}} "4"; "6"};
 {\ar@{->}^{\rho_1} "1";"2"};
 {\ar@/^1.33pc/^{\rho_{hg }} "5";"1"};
 {\ar@{->}@/^3.33pc/^{\rho_{gh }} "5";"2"};
  {\ar@{->}@/^1.33pc/^{\rho_{ g^*h^*}} "2";"6"};
   {\ar@{->}@/^5.33pc/^{\rho_{ 1}} "5";"6"};
  \endxy.
 \end{equation}  Note that
 \begin{equation}\label{eq:cancel}
      \xy 0;/r.22pc/:
 (-15,0)*+{ {x}}="1";
(-30,0)*+{ x}="3";
(-45,0)*+{ x}="5";
{\ar@{->}|-{\rho_g} "3"; "1"};
{\ar@{->}_{\rho_h} "5"; "3"};
 {\ar@/^2.33pc/^{\rho_{hg }} "5";"1"};
  {\ar@/_2.33pc/_{\rho_{hg }} "5";"1"};{\ar@{=>}^{\phi_{h,g}^{-1}} (-30,9); (-30,2)};{\ar@{=>}^{\phi_{h,g} } (-30,-2); (-30,-9)};
  \endxy\xy0;/r.22pc/:
  (0,0)*+{ }="1";
(10,0)*+{ }="2";
{\ar@{=}^{ }   "1"+(0,0);"2"+(0,0)};
   \endxy\xy 0;/r.22pc/:
 (-15,0)*+{ {x}}="1";
(-45,0)*+{ x}="5";
{\ar@{->}^{\rho_{hg}} "5"; "1"};
   \endxy
 \end{equation} and the part involving $\rho_{ g^*}\rho_{h^*}$ is also cancelled. As a result, the composition of (\ref{eq:psi-g-h-4-upper}) and (\ref{eq:psi-g-h-4}), together with $2$-arrow   $\chi:1_x\xy0;/r.22pc/:
  (0,0)*+{ }="1";
(7,0)*+{ }="2";
{\ar@{=>}^{ }   "1"+(0,0);"2"+(0,0)};
   \endxy\rho_f$, gives us the diagram (\ref{eq:psi-g-graph}) with $g$ replaced by $gh$. This is exactly
 $\psi_{gh}(\chi)$. Therefore, the composition of suitable whiskered $3$-isomorphisms   $ \hat \Lambda_1', \hat \Lambda_2', \hat \Lambda_3'$,  $  \hat\Lambda_1, \hat \Lambda_2$ and $
 \hat\Lambda_3$ gives a natural isomorphism  $\Gamma_{h,g}:  \psi_h \circ\psi_g\longrightarrow\psi_{hg} $ such that for $\chi\in (\mathbb{T}r_2\rho_f)_0 $$$
   \Gamma_{h,g}(\chi):\psi_h (\psi_g(\chi))\xy0;/r.22pc/:
  (0,0)*+{ }="1";
(7,0)*+{ }="2";
{\ar@3{->}^{ }   "1"+(0,0);"2"+(0,0)};
   \endxy\psi_{hg}(\chi)
$$
 is a $3$-isomorphism  in $\mathcal{C} $.

  It is not easy to draw $3$-arrows $ \hat\Lambda_j$'s in the $3$-category $\mathcal{C}$.
 But in the
$2$-category $ \mathcal{C}^{  +}$, the first $3$-arrow $\hat \Lambda_1$ in (\ref{eq:hat-Lambda-1}) can be drawn as the $2$-isomorphism corresponding to the following diagram:
 $$
      \xy 0;/r.22pc/:
 (-30,0)*+{\scriptscriptstyle {\rho_h\rho_{g  } \rho_f\rho_{g^*} \rho_{h^*}}}="1";
(20,0)*+{ \scriptscriptstyle {\rho_h \rho_{g f}\rho_{g^*} \rho_{h^*}} }="2";
(60,0)*+{\scriptscriptstyle {\rho_h  \rho_{\underline{gg^*}}  \rho_{h^*}}}="3";
(90,0)*+{ \scriptscriptstyle {\rho_{\underline{hg^*}}  \rho_{h^*}}}="4";
(120,0)*+{\scriptscriptstyle { \rho_{\underline{hh^*}}}}="5";
 (40,-15)*+{\scriptscriptstyle {\rho_{\underline{hf}}  \rho_{g^*} \rho_{h^*}}}="8";
{\ar@{->}^{\scriptscriptstyle \rho_h\#_0\phi_{g,f}\#_0\left(\rho_{g^*}\rho_{h^*}\right)} "1"; "2"};
{\ar@{->}^{\scriptscriptstyle\rho_h\#_0\phi_{g f,g^*}\#_0 \rho_{h^*} } "2"; "3"};
{\ar@{->}^{\scriptscriptstyle\phi_{h, \underline{gg^*}\#_0 \rho_{h^*} }} "3"; "4"};
{\ar@{->}^{\scriptscriptstyle\phi_{\underline{hg^*}, h^*}} "4"; "5"};
{\ar@{->}_{\scriptscriptstyle \phi_{h,gf }\#_0 (\rho_{g^*}\rho_{h^*} )} "2"; "8"};
{\ar@{->}_{\scriptscriptstyle \phi_{\underline{hf},g^* }\#_0 \rho_{h^*} } "8"; "4"};{\ar@{=>}^{ \hat \Lambda_1} (40,-1)*{}; (40,-13)*{}};
  \endxy.
 $$ Here
 the upper  path
 $$
      \xy 0;/r.22pc/:
 (-40,0)*+{  {\rho_h\rho_{g  } \rho_f\rho_{g^*} \rho_{h^*}}}="1";
(20,0)*+{ {\rho_h \rho_{g f}\rho_{g^*} \rho_{h^*}} }="2";
(70,0)*+{\cdots\cdots }="3";
{\ar@{->}^{  \rho_h\#_0\phi_{g,f}\#_0\left(\rho_{g^*}\rho_{h^*}\right)} "1"; "2"};
{\ar@{->}^{ \rho_h\#_0\phi_{g f,g^*}\#_0 \rho_{h^*} } "2"; "3"};
  \endxy
 $$
 corresponds to the $2$-isomorphisms in $\mathcal{C}$ in (\ref{eq:psi-g-h}) (the lower half   of $\psi_h (\psi_g(\chi))$), while
 the lower paths    corresponds to the $2$-isomorphisms in $\mathcal{C}$ in  (\ref{eq:psi-g-h-2}) (the lower half   of $\psi_{h g}(\chi))$). And the $2$-isomorphism $\hat \Lambda_1$ corresponds to the $3$-isomorphism in $\mathcal{C} $ in  (\ref{eq:hat-Lambda-1}). Since ${\rm \mathbb{T}r}_2\rho_f $ is a subcategory of $ \mathcal{C}^{ + +}$,   diagrams in the $2$-category  $ \mathcal{C}^{ +  }$ are sufficient for our purpose.
In the  sequel, to simplify  diagrams,
$$
   \rho_h \cdots \rho_{\underline{g_1g_2}} \cdots \rho_{h^*}\quad {\rm is}
\quad {\rm simply} \quad {\rm written}\quad {\rm as}\quad
 \rho_{\underline{g_1g_2}},
$$
 as an object  in the $2$-category $ \mathcal{C}^{  +}$.
 For simplicity, we also omit the whiskering part of  $1$-isomorphisms $\phi_{*,*}$'s in   diagrams.
The  $3$-isomorphisms  $  \hat\Lambda_1:$ (\ref{eq:psi-g-h})
$\xy0;/r.22pc/:
  (0,0)*+{ }="1";
(7,0)*+{ }="2";
{\ar@3{->}^{ }   "1"+(0,0);"2"+(0,0)};
   \endxy$(\ref{eq:psi-g-h-2}),  $ \hat \Lambda_2:$ (\ref{eq:psi-g-h-2})$\xy0;/r.22pc/:
  (0,0)*+{ }="1";
(7,0)*+{ }="2";
{\ar@3{->}^{ }   "1"+(0,0);"2"+(0,0)};
   \endxy$(\ref{eq:psi-g-h-3}) and $ \hat\Lambda_3:$ (\ref{eq:psi-g-h-3})$\xy0;/r.22pc/:
  (0,0)*+{ }="1";
(7,0)*+{ }="2";
{\ar@3{->}^{ }   "1"+(0,0);"2"+(0,0)};
   \endxy$(\ref{eq:psi-g-h-4}) in   the $3$-category $ \mathcal{C} $   correspond  to $2$-isomorphisms in   the $2$-category $ \mathcal{C}^{  +}$ in the following diagram:
\begin{equation}\label{eq:lower}
  \mathscr{ D}_f:\xy0;/r.22pc/:
  (0,0)*+{ }="1";
(7,0)*+{ }="2";
{\ar@{=}^{ }   "1"+(0,0);"2"+(0,0)};
   \endxy   \xy 0;/r.22pc/:
 (0,0)*+{ \rho_f }="1";
(30,0)*+{ \rho_{g f} }="2";
(60,0)*+{ \rho_{\underline{gg^*}}}="3";
(90,0)*+{ \rho_{\underline{hg^*}}}="4";
(120,0)*+{ \rho_{\underline{hh^*}}}="5";
 (40,-15)*+{\rho_{\underline{hf}}   }="8";
  (20,-35)*+{\rho_{hg  }   }="9";
  (90,-20)*+{\rho_{\underline{hf}}\rho_{g^*  h^* }   }="10";
{\ar@{->}^{\phi_{g,f}} "1"; "2"};
{\ar@{->}^{\phi_{g f,g^*}} "2"; "3"};
{\ar@{->}^{\phi_{h, \underline{gg^*}}} "3"; "4"};
{\ar@{->}^{\phi_{\underline{hg^*}, h^*}} "4"; "5"};
{\ar@{->}^{\scriptscriptstyle \phi_{h,gf }} "2"; "8"};
{\ar@{->}_{\scriptscriptstyle \phi_{\underline{hf},g^* }} "8"; "4"};
{\ar@{->}_{ \phi_{h,g  }} "1"; "9"};
{\ar@{->}_{ \phi_{h g,f  }} "9"; "8"};
{\ar@{->}_{ \phi_{g^*,  h^*}} "8"; "10"};
{\ar@{->}_{ \phi_{\underline{hf}, g^*  h^*  }} "10"; "5"};
{\ar@{=>}_{ \hat \Lambda_1} (55,-0.5)*{}; (55,-10)*{}};
{\ar@{=>}^{ \hat \Lambda_2} (20,-5)*{}; (20,-29)*{}};
{\ar@{=>}_{ \hat \Lambda_3} (90,-3)*{}; (90,-17)*{}};
  \endxy,
 \end{equation} respectively.
Just as for the upper half   of  diagram (\ref{eq:2-composition}),   diagram (\ref{eq:upper-half}) is changed to
  diagram (\ref{eq:psi-g-h-4-upper}). In $\mathcal{C}^+$, this is the composition of $2$-isomorphisms given by the following diagram
 \begin{equation}\label{eq:upper}
     \xy 0;/r.22pc/:
 (0,0)*+{ \rho_1 }="1";
(-30,0)*+{ \rho_{g 1} }="2";
(-60,0)*+{ \rho_{\underline{gg^*}}}="3";
(-90,0)*+{ \rho_{\underline{hg^*}}}="4";
(-120,0)*+{ \rho_{\underline{hh^*}}}="5";
 (-40,-15)*+{\rho_{\underline{h1}}   }="8";
  (-20,-35)*+{\rho_{hg  }   }="9";
  (-90,-20)*+{\rho_{\underline{h1}}\rho_{g^*  h^* }   }="10";
{\ar@{<-}_{\phi_{g,1}^{-1}} "1"; "2"};
{\ar@{<-}_{\phi_{g 1,g^*}^{-1}} "2"; "3"};
{\ar@{<-}_{\phi_{h, \underline{gg^*}}^{-1}} "3"; "4"};
{\ar@{<-}_{\phi_{\underline{hg^*}, h^*}^{-1}} "4"; "5"};
{\ar@{<-}^{ \scriptscriptstyle\phi_{h,g1 }^{-1}} "2"; "8"};
{\ar@{<-}^{\scriptscriptstyle \phi_{\underline{h1},g^* }^{-1}} "8"; "4"};
{\ar@{<-}^{ \phi_{h,g  }^{-1}} "1"; "9"};
{\ar@{<-}^{ \phi_{h g,1  }^{-1}} "9"; "8"};
{\ar@{<-}^{ \phi_{g^*,  h^*}^{-1} } "8"; "10"};
{\ar@{<-}^{ \phi_{\underline{h1}, g^*  h^*  }^{-1}} "10"; "5"};
{\ar@{=>}^{ \hat{\hat \Lambda}_1^\dag} (-55,-0.5)*{}; (-55,-10)*{}};
{\ar@{=>}_{ \hat{\hat \Lambda}_2^\dag} (-20,-5)*{}; (-20,-30)*{}};
{\ar@{=>}^{ \hat{\hat \Lambda}_3^\dag} (-90,-2)*{}; (-90,-18)*{}};
  \endxy \xy0;/r.22pc/:
  (0,0)*+{ }="1";
(7,0)*+{ }="2";
{\ar@{=}^{ }   "1"+(0,0);"2"+(0,0)};
   \endxy : \mathscr{ D}_1^\dag
 \end{equation}where $\hat{\hat \Lambda}_j$ is the $2$-isomorphism previously denoted by $\hat\Lambda_j$ (with $f$ replaced by $1$), and $\hat{\hat \Lambda}_j^\dag$ (previously   denoted by $ {\hat \Lambda}_j'$) is
 the $2$-isomorphism adjoint to $\hat{\hat \Lambda}_j$, defined in \S \ref{adjoint} . Recall that the adjoint $2$-isomorphism is the inverse one with $1$-source and $1$-target  inverted. We apply the adjoint operation  to  diagram (\ref{eq:lower}) to get  diagram (\ref{eq:upper}), the mirror-symmetric diagram of (\ref{eq:lower}),  by   using Proposition \ref{prop:adjoint}. Given $\chi:1_x\xy0;/r.22pc/:
  (0,0)*+{ }="1";
(7,0)*+{ }="2";
{\ar@{=>}^{ }   "1"+(0,0);"2"+(0,0)};
   \endxy\rho_f$, we connect the diagrams  (\ref{eq:upper}) and (\ref{eq:lower}) to get
 $\Gamma_{h,g}(\chi)$ as a  $2$-isomorphism
  in $ \mathcal{C}^{  +}$:
 \begin{equation}\label{eq:D-1-f}
   \mathscr{ D}_1^\dag \xy0;/r.22pc/:
  (0,0)*+{ }="1";
(7,0)*+{ }="2";
{\ar@{->}^{\chi }   "1"+(0,0);"2"+(0,0)};
   \endxy  \mathscr{ D}_f.
 \end{equation}

For objects $\chi,\chi'\in (\mathbb{T}r_2\rho_f)_0$ and a morphism  $\Theta:\chi\rightarrow\chi'$ in $(\mathbb{T}r_2\rho_f)_1$ (i.e., a  $3$-arrow
   in $ \mathcal{C} $),
$\Gamma_{h,g}(\Theta)$ is also a  $3$-arrow. We connect diagrams (\ref{eq:lower}) and (\ref{eq:upper}) to get $\Gamma_{h,g}(\Theta)$ as the following diagram in the $2$-category $  \mathcal{C}^{  +}$:
\begin{equation}\label{eq:connect}
      \xy 0;/r.22pc/:
      (-30,0)*+{ \rho_1 }="11";
(-60,0)*+{    \rho_{g 1}}="12"; (-70,0)*+{  \cdots  \cdots }="112";
 (-70,-15)*+{   \rho_{\underline{h1}} }="18";(-80,-15)*+{ \cdots \cdots  }="118";
  (-50,-35)*+{ \rho_{hg  }   }="19";
{\ar@{->}^{\phi_{g,1}^{-1}}"12"; "11"};
{\ar@{<-}_{ \phi_{h,g1 }^{-1}} "12"; "18"};
{\ar@{<-}^{ \phi_{h,g  }^{-1}} "11"; "19"};
{\ar@{<-}^{ \phi_{h g,1  }^{-1}} "19"; "18"};
{\ar@{=>}_{ \hat {\hat\Lambda}_2^\dag} (-50,-5)*{}; (-50,-25)*{}};
 (0,0)*+{ \rho_f }="1";
(30,0)*+{ \rho_{g f}   }="2";(40,0)*+{ \cdots\cdots  }="02";
 (40,-15)*+{\rho_{\underline{hf}}   }="8"; (50,-15)*+{\cdots \cdots  }="08";
  (20,-35)*+{\rho_{hg  }  }="9";
{\ar@{->}^{\phi_{g,f}} "1"; "2"};
{\ar@{->}^{ \phi_{h,gf }} "2"; "8"};
{\ar@{->}_{ \phi_{h,g  }} "1"; "9"};
{\ar@{->}_{ \phi_{h g,f  }} "9"; "8"};
{\ar@{=>}^{ \hat \Lambda_2} (20,-5)*{}; (20,-25)*{}};
{\ar@{->}^{ \chi} "11"; "1"};
{\ar@{->}@/_4.33pc/ "11";"1"_{\chi'}};
 {\ar@{=>}^{ \Theta} (-15,-3)*{}; (-15,-15)*{}};
  \endxy.
 \end{equation}Note that $\psi_h \circ\psi_g(\chi)$ in (\ref{eq:2-composition}) is the upper boundary of diagram (\ref{eq:connect})  and $\psi_{hg}(\chi')$ is the lower boundary of diagram (\ref{eq:connect}).
 $\Gamma_{h,g}(\chi)$ is  the diagram (\ref{eq:connect}) with the $2$-arrow $\Theta: \chi \Longrightarrow\chi'$ deleted, but $1$-arrow $\chi:\rho_1\longrightarrow \rho_f$ remains, whereas
 $\Gamma_{h,g}(\chi')$ is the  diagram (\ref{eq:connect}) with the $2$-arrow $\Theta: \chi \Longrightarrow\chi'$ deleted, but $1$-arrow $\chi':\rho_1\longrightarrow \rho_f$ remains.
 Applying the interchange  law
(\ref{eq:interchanging-law}) to the diagram (\ref{eq:connect}), we see that $\Gamma_{h,g}$ is a natural isomorphism  in the category ${\rm \mathbb{T}r}_2\rho_f \subset \mathcal{C}^{ + +}$, i.e.   the diagram
$$
    \xy0;/r.22pc/:
(-10,0)*+{   \psi_h \circ\psi_g(\chi)}="1";
(30,0)*+{   \psi_h \circ\psi_g(\chi' )}="2";
(-10,-30)*+{ \psi_{hg}(\chi)}="3";
(30,-30)*+{  \psi_{hg}(\chi')}="4";
{\ar@{->}^{  \psi_h \circ\psi_g(\Theta)} "1";"2" };
{\ar@{->}_{ \psi_{hg}(\Theta)} "3";"4" };
{\ar@{->}_{   \Gamma_{h,g}(\chi )} "1";"3" };
{\ar@{->}^{ \Gamma_{h,g}(\chi')} "2";"4" };
\endxy
$$
is commutative, where the $2$-arrows $ \psi_h \circ\psi_g(\Theta)$ and $ \psi_{hg}(\Theta)$ in $\mathcal{C}^{   +}$ are  $\Theta$ whiskered by $1$-isomorphisms corresponding to the upper and lower boundaries of   diagram  (\ref{eq:connect}), respectively.
\begin{thm} \label{thm:tr-cat}  $\left\{\psi_g,\Gamma_{h,g}\right\}_{g,h\in C_G(f)}$ is a categorical action of    the centralizer $C_G(f)$ on the category ${\rm \mathbb{T}r}_2 \rho_f$.
  \end{thm}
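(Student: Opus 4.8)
The plan is to verify the three defining conditions of a categorical action of $C_G(f)$ on ${\rm \mathbb{T}r}_2\rho_f$. Two of them are already in hand: each $\psi_g$ is a functor by Proposition~\ref{prop:functor} and the discussion following \eqref{eq:psi-g-1}, and it is an equivalence because the natural isomorphisms $\Gamma_{g,g^*}$ and $\Gamma_{g^*,g}$ exhibit $\psi_{g^*}$ as a quasi-inverse; moreover each $\Gamma_{h,g}$ is a natural isomorphism by the interchange law \eqref{eq:interchanging-law} applied to \eqref{eq:connect}. The genuine content is therefore the associativity identity \eqref{eq:cat-tr-0}, an equality of natural transformations $\psi_k\circ\psi_h\circ\psi_g\to\psi_{khg}$. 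Evaluated at an object $\chi\in({\rm \mathbb{T}r}_2\rho_f)_0$, both sides are $3$-isomorphisms in $\mathcal{C}$, hence $2$-isomorphisms in the auxiliary $2$-category $\mathcal{C}^{+}$, so I would work entirely in $\mathcal{C}^{+}$, where each $\Gamma$ is assembled from the whiskered associators $\hat\Lambda_1,\hat\Lambda_2,\hat\Lambda_3$ of \eqref{eq:hat-Lambda-1}--\eqref{eq:hat-Lambda-3'} as in diagram \eqref{eq:lower}.

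First I would split the computation into an upper half and a lower half, exactly as in the construction \eqref{eq:D-1-f} of $\Gamma_{h,g}(\chi)$ from $\mathscr{D}_1^\dag$ and $\mathscr{D}_f$. Each side of \eqref{eq:cat-tr-0}, evaluated at $\chi$, concatenates through $\chi$ an upper part built from $\mathscr{D}_1^\dag$-type pieces and a lower part built from $\mathscr{D}_f$-type pieces; since $\chi$ is common, the identity splits into an upper identity and a lower identity. Now the upper-half diagram \eqref{eq:upper} is obtained from the lower-half diagram \eqref{eq:lower} by the substitution $f\mapsto 1$ together with the adjoint operation of \S\ref{adjoint}, and Proposition~\ref{prop:adjoint} shows that $(-)^\dag$ respects vertical composition, horizontal composition and whiskering. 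Hence it suffices to establish the coherence identity for the lower-half ($f$) diagrams; the upper-half ($1$) identity then follows formally by setting $f\mapsto 1$ and applying $(-)^\dag$. This reduction isolates the genuinely new phenomenon in the $\rho_f$ part.

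The heart of the argument is then a purely diagrammatic identity in $\mathcal{C}^{+}$. For the triple composite $\psi_k\circ\psi_h\circ\psi_g$ the lower half is a diagram built on the word $\rho_k\rho_h\rho_g\rho_f\rho_{g^*}\rho_{h^*}\rho_{k^*}$, and the two bracketings in \eqref{eq:cat-tr-0} correspond to two different sequences of ``diagonal flips'' carrying this diagram to the one for $\psi_{khg}$, each flip being a whiskered associator $\Phi_{*,*,*}$. I would show that one sequence is transformed into the other by a finite number of local moves, each of which is precisely an instance of the $3$-cocycle condition \eqref{eq:4-cocycle} (the triviality of the $3$-holonomy around the boundary of a $4$-simplex), together with reorderings of commuting whiskerings supplied by the interchange law \eqref{eq:interchanging-law} and the compatibility condition \eqref{eq:asso2}. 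The scalar computation of Subsection~\ref{sub:1-dim}, where \eqref{eq:4-cocycle} degenerates to the numerical relation \eqref{eq:3-cocycle-0} and the two bracketings become equal products in $k^{*}$, serves as the combinatorial skeleton: I would first carry it out to fix the exact order in which \eqref{eq:4-cocycle} is invoked, and then lift each scalar step to the corresponding diagrammatic step verbatim.

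The main obstacle is the bookkeeping. Throughout, the whiskering factors $\rho_k\cdots\rho_{k^*}$ (and the analogous $\rho_{g^*}\rho_{h^*}$, and so on) are suppressed in the notation of \eqref{eq:lower}, yet they must be tracked precisely so that each local quadrilateral to which \eqref{eq:4-cocycle} is applied carries exactly the right whiskering; an incorrect whiskering invalidates the application of the $3$-cocycle condition. The delicate points are therefore: (i) identifying, at each stage, which quadrilateral is being flipped and by which whiskered copy of $\Phi$; (ii) inserting the interchange-law reorderings \eqref{eq:interchanging-law} needed to bring the two relevant diagonals into the adjacent position required by \eqref{eq:4-cocycle}; and (iii) checking that the suppressed whiskerings on the two sides of \eqref{eq:cat-tr-0} agree after all flips. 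This is where the difficulty flagged in the introduction resides, and it is the step I expect to consume the bulk of Section~6.
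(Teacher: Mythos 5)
Your overall strategy---reduce everything to the associativity identity (\ref{eq:cat-tr-0}), work in $\mathcal{C}^{+}$, use the scalar computation of Subsection \ref{sub:1-dim} as a combinatorial skeleton, and realize each step as an application of the $3$-cocycle condition (\ref{eq:4-cocycle}) plus interchange-law reorderings---is exactly the paper's strategy. But there is a genuine gap in your key structural reduction: the claim that ``since $\chi$ is common, the identity splits into an upper identity and a lower identity,'' so that it suffices to prove the coherence identity for the lower-half ($f$) diagrams alone. That lower-half identity is \emph{false}. The two bracketings of the lower-half composite are not equal: after the $3$-cocycle condition is applied repeatedly, the right-hand diagram $\mathscr{D}_f^r$ is transformed into $\mathscr{D}_f^l$ \emph{together with two extra whiskered $2$-isomorphisms} $\Xi_1$ and $\Xi_2$ (see (\ref{eq:Omega-000})). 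These leftovers are independent of $f$, and they are only eliminated because the same operations applied to the upper half produce the adjoint pieces $\Xi_1^\dag,\Xi_2^\dag$ in (\ref{eq:Omega-00}), which cancel against $\Xi_1,\Xi_2$ \emph{across the middle arrow} $\chi$ via the interchange law and the cancellation (\ref{eq:cancel}), as in (\ref{eq:D-chi-D}). Equality of two composites through a common middle arrow does not force equality of the halves separately, and here it genuinely fails.

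The scalar model you intend to use as a skeleton already exhibits this quantitatively, so following it faithfully would have flagged the problem: comparing (\ref{eq:1-cocycle-0}) with (\ref{eq:1-cocycle-4}) in the proof of Proposition \ref{prop:2-cocycle}, one finds $\Pi_f'=\Pi_f\cdot c(k,h,g)\,c(g^*,h^*,k^*)^{-1}\neq\Pi_f$; only the ratios agree, $\Pi_f'/\Pi_1'=\Pi_f/\Pi_1$, precisely because the discrepancy $c(k,h,g)\,c(g^*,h^*,k^*)^{-1}$ is $f$-independent and cancels between numerator and denominator. So your reduction must be replaced by the weaker (and correct) statement that the two bracketings of the lower half differ by explicit $f$-independent $2$-isomorphisms, followed by the argument that these commute past $\chi$ and cancel against their adjoints from the $f=1$ half. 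The remaining bookkeeping concerns you raise (tracking whiskerings, ordering the applications of (\ref{eq:4-cocycle})) are real and are where the bulk of Section 6 is spent, but they are surmountable once the splitting claim is corrected.
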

  This theorem will be proved in Section 6 by checking the associative law (\ref{eq:cat-tr-0}) for $\Gamma_{*,*}$,
which is an identity of natural transformations between functors on ${\rm \mathbb{T}r}_2 \rho_f$. Note that
in (\ref{eq:cat-tr-0}), $s_0(\Gamma_{k,h g})=\psi_k \circ\psi_{hg}=t_0(\psi_k \circ\Gamma_{h,g})$. So the composition of natural transformations   used in (\ref{eq:cat-tr-0})
is in the usual order, not in  the natural order which  we assumed in Remark 2.1 (1).

\subsection{$1$-dimensional $3$-representations}\label{sub:1-dim}
We fix a field $k$ of characteristic $0$
containing all roots of unity.
Let $\mathcal{A}$ be a $2$-category with only one object,   one $1$-arrow and $2$-arrows $\mathcal{A}_2\cong k^*$. Fix a  $3$-cocycle $c$ satisfying the condition (\ref{eq:3-cocycle-0}).  Let $\varrho^c$  be the strict $2$-categorical action of $G$ on $\mathcal{A}$ as follows: $\varrho^c_g$ is the identity functor for each $g\in G$;
$$
  \phi_{h,g}: 1_{\mathcal{A}}=\varrho^c_h\varrho^c_g\Longrightarrow\varrho^c_{hg}=1_{\mathcal{A}}
$$
is also the identity pseudonatural isomorphism for any $h, g\in G$; and
\begin{equation}\label{eq:3-cocycle-00}
     \Phi_{ g_3,g_2,g_1}:id=(\varrho^c_{g_3} \#_0\phi_{g_2,g_1} ) \#_1 \phi_{ g_3,g_2 g_1}\xy0;/r.22pc/:
  (0,0)*+{ }="1";
(10,0)*+{ }="2";
{\ar@3{->}^{ }  "1" ;"2" };
   \endxy (\phi_{g_3,g_2}\#_0 \varrho^c_{g_1})\#_1
   \phi_{g_3 g_2,g_1}=id ,
\end{equation}
is a modification determined by the element   $c(g_3,g_2,g_1) \in k^*$ for any $g_3,g_2,g_1\in G$. Then the $3$-cocycle condition (\ref{eq:Omega}) for $\Phi$   is reduced to the equation (\ref{eq:3-cocycle-0}).
The cohomology  classes of  $3$-cocycles are classified by $H^3(G,k^*)$.

For $f\in G$, it is easy to see that ${\rm \mathbb{T}r}_2\varrho^c_f$ is a category with a single object given by the identity pseudonatural isomorphism $\chi_0:1_{\mathcal{A}}\rightarrow \varrho^c_f= 1_{\mathcal{A}}$, and morphisms $({\rm \mathbb{T}r}_2\rho_f)_1\cong k^*$ (an element of  $ k^*$ provides a modification). For $g\in C_G(f)$, $\psi_g:{\rm \mathbb{T}r}_2\varrho^c_f\rightarrow{\rm \mathbb{T}r}_2\varrho^c_f$ is the identity functor by the definitions (\ref{eq:psi-g})-(\ref{eq:psi-g-1}). And
$$
   \Gamma_{h,g}:\chi_0=\psi_h\circ\psi_g(\chi_0)\longrightarrow \psi_{hg
}(\chi_0)=\chi_0
$$
  is a natural isomorphism  given by the element (also denoted by $ {  \Gamma}_{h,g}$ by abuse of notations)
\begin{equation}\label{eq:2-cocycle}
 {  \Gamma}_{h,g}=\frac {c(h,gf,g^*)c(h,g,f)c(\underline{hf},g^*,h^*)^{-1}}{c(h,g ,g^*)c(h,g,1) c(hg ,g^*,h^*)^{-1}}.
\end{equation} This element is obtained by replacing $\Phi_{ g_3,g_2,g_1}$ by the element $c(g_3,g_2,g_1) $  and all other isomorphisms by $1$ in
 $\hat \Lambda_j$'s in (\ref{eq:hat-Lambda-1}) (\ref{eq:hat-Lambda-2}) (\ref{eq:hat-Lambda-3'}),  and using the adjoint operation (\ref{eq:adjoint}), .

\begin{prop}\label{prop:2-cocycle}
   $  \Gamma$ given by (\ref{eq:2-cocycle}) is a $2$-cocycle on the   centralizer $C_G(f)$.
\end{prop}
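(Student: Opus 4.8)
The plan is to reduce the statement to a single scalar identity and then derive it by iterating the $3$-cocycle relation (\ref{eq:3-cocycle-0}). In the $1$-dimensional model every $\psi_g$ is the identity functor, so every whiskering $\psi_k\circ\Gamma_{h,g}$ (resp.\ $\Gamma_{k,h}\circ\psi_g$) returns the scalar $\Gamma_{h,g}$ (resp.\ $\Gamma_{k,h}$), and the composition $\#$ of natural transformations on the one-object category ${\rm \mathbb{T}r}_2\varrho^c_f$ is just multiplication in $k^*$. Hence the associativity constraint (\ref{eq:cat-tr-0}) collapses to
\[
\Gamma_{k,hg}\,\Gamma_{h,g}=\Gamma_{kh,g}\,\Gamma_{k,h},
\]
which is exactly the $2$-cocycle condition for $\Gamma$ on $C_G(f)$. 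Thus it suffices to verify this equation for $\Gamma$ given by (\ref{eq:2-cocycle}), and the whole argument can then be phrased purely in terms of elements of $k^*$.

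First I would substitute (\ref{eq:2-cocycle}) into all four factors. Since $g,h,k$ all commute with $f$, I would repeatedly use $gfg^{*}=f$, $hgfg^{*}=hf$, $hgg^{*}=h$, together with $(hg)^{*}=g^{*}h^{*}$ and $(kh)^{*}=h^{*}k^{*}$, to put every inner product into a canonical form. Because $c$ is normalized (the assumption that $\phi_1$ is the identity, Remark 2.2(1), forces $c(-,-,1)=c(-,1,-)=c(1,-,-)=1$), the four factors $c(k,hg,1)$, $c(h,g,1)$, $c(kh,g,1)$, $c(k,h,1)$ coming from the denominators drop out at once. After clearing the remaining denominators the claim becomes a single equality between two products of values of $c$, each of the four $\Gamma$'s contributing three nontrivial factors.

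I would then peel these off in three waves of applications of (\ref{eq:3-cocycle-0}). In the first wave I apply it to the quadruples $(khgf,g^{*},h^{*},k^{*})$ and $(khg,g^{*},h^{*},k^{*})$, rewriting the factors $c(khgf,g^{*}h^{*},k^{*})$ and $c(khg,g^{*}h^{*},k^{*})$ of $\Gamma_{k,hg}$; this produces precisely the factors $c(khgf,g^{*},h^{*}k^{*})$, $c(khf,h^{*},k^{*})$, $c(khg,g^{*},h^{*}k^{*})$, $c(kh,h^{*},k^{*})$ occurring in $\Gamma_{kh,g}\Gamma_{k,h}$, which then cancel. The second wave, applied to $(k,hgf,g^{*},h^{*})$ and $(k,hg,g^{*},h^{*})$, disposes of $c(k,hgf,g^{*}h^{*})$ and $c(k,hg,g^{*}h^{*})$ (here $hgfg^{*}=hf$ and $hgg^{*}=h$ are used). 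The third wave, applied to $(k,h,gf,g^{*})$ and $(k,h,g,g^{*})$, clears the surviving $g^{*}$-terminal factors via $gfg^{*}=f$ and $gg^{*}=1$. After these telescoping cancellations the entire identity reduces to the single instance
\[
c(h,g,f)\,c(k,hg,f)\,c(k,h,g)=c(k,h,gf)\,c(kh,g,f),
\]
which is (\ref{eq:3-cocycle-0}) with $(g_4,g_3,g_2,g_1)=(k,h,g,f)$, completing the proof.

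The only real difficulty is the bookkeeping. One must track roughly two dozen values of $c$ and, at each step, select the instance of the $3$-cocycle identity that simultaneously splits a middle-slot product and manufactures a factor already present on the opposite side, so that everything telescopes down to the final $(k,h,g,f)$ relation. Choosing this order of cancellations is the crux; as the introduction notes, it mirrors step by step the diagram chase for Theorem \ref{thm:tr-cat} in Section 6.4, with the diagrams there replaced here by plain elements of $k^{*}$.
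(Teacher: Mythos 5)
Your overall route is the paper's own: reduce (\ref{eq:cat-tr-0}) in the one--object, one--arrow model to the scalar identity $\Gamma_{k,hg}\Gamma_{h,g}=\Gamma_{kh,g}\Gamma_{k,h}$ and then telescope with the same four instances of (\ref{eq:3-cocycle-0}) that the paper uses, namely the quadruples $(k,h,gf,g^*)$, $(khgf,g^*,h^*,k^*)$, $(k,hgf,g^*,h^*)$ and $(k,h,g,f)$ (the paper applies them in the order $1,2,3,4$ of its equations (\ref{eq:1-cocycle-2})--(\ref{eq:1-cocycle-4}); you apply them in the order $2,3,1,4$ and read the cancellations from the $\Gamma_{k,hg}\Gamma_{h,g}$ side instead of from $\Pi'_f$, which is only a cosmetic difference). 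The paper handles the ``$f=1$'' companions by the substitution $f\mapsto 1$ and a final cancellation of the $f$-independent residue $c(k,h,g)\,c(g^*,h^*,k^*)^{-1}$; your explicit second member of each wave does the same job.

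There is, however, one genuine gap: the claim that Remark 2.2(1) forces $c$ to be normalized, so that $c(k,hg,1)$, $c(h,g,1)$, $c(kh,g,1)$, $c(k,h,1)$ ``drop out at once.'' The assumption that $\phi_1$ is the identity only constrains $\phi_{1,g}$ and $\phi_{g,1}$; the paper imposes no condition on the associators $\Phi_{g_3,g_2,g_1}$ with a unit entry, and the cocycle condition (\ref{eq:3-cocycle-0}) does not imply $c(\,\cdot\,,\cdot\,,1)=1$. For instance $c(g_3,g_2,g_1)=\mu(g_3)$ with $\mu$ a nontrivial character of $G$ satisfies (\ref{eq:3-cocycle-0}) but has $c(k,h,1)=\mu(k)\neq 1$; for this $c$ the genuine $\Gamma_{h,g}$ of (\ref{eq:2-cocycle}) equals $\mu(f)^{-1}$ and is a $2$-cocycle, whereas your truncated version equals $\mu(h)\mu(f)^{-1}$ and is not. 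So discarding those four factors proves the wrong statement. The repair is cheap and brings you exactly back to the paper's bookkeeping: keep the four factors; $c(k,h,1)$ is consumed by the $f=1$ member of your third wave (the instance $(k,h,g,g^*)$ has $g_2g_1=1$ and produces $c(k,h,gg^*)=c(k,h,1)$), and the remaining relation $c(h,g,1)\,c(k,hg,1)=c(kh,g,1)$ is the $g_1=1$ degeneration of (\ref{eq:3-cocycle-0}), which is the $f=1$ counterpart of your final identity. With that change the telescoping closes and the argument coincides with the proof in Section 6.1.
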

This proposition will be proved in Section 6.1.

\begin{rem} There exists a  transgression map that maps a $3$-cocycle $c$ on a finite group $G$ to a $2$-cocycle on the inertia groupoid of $G$ \cite{Wi}. It is given by
  $$
  C_{h,g}:=   \frac { c(h, g, f)c(hgfg^{-1}h^{-1}, h, g)}{c(h, gfg^{-1}, g)}
  $$
for given  $f\in G$ (cf. Remark 3.17 in \cite{GU}). Note that for $h,g\in C_G(f)$ we have $C_{h,g}:=    { c(h, g, f)c( f , h, g)}/{c(h,  f , g)}$. So our $2$-cocycle $\Gamma_{h,g}$ in  (\ref{eq:2-cocycle}) is different from the transgressed one. On the other hand, our  $2$-cocycle is only defined for elements which commute with a given element $f$, not on the entire  inertia groupoid of $G$.
\end{rem}

Let $\varrho$ be a categorical action of  a finite group $G$  on a $ {k}$-linear category $\mathcal{W}$. For a  commuting pair of elements $g$ and $f$ in $G$, the {\it $2$-character $\chi_\varrho( f , g )$}  of a categorical action  $ \varrho $ is the joint trace   of functors $  \varrho_f $ and $  \varrho_g $, i.e., the
 trace of the linear transformation induced by the functor $  \varrho_g $   on the categorical trace $ \mathbb{{T}}r  \varrho_f $ (a  $ {k}$-vector space, which we assume to be finite dimensional).

Now let $\rho$ be a strict $2$-categorical action of  a finite group  $G$ on a $ {k}$-linear $2$-category $\mathcal{V}$. Then
$\mathbb{ {T}}r_2  \rho_f $ is a $ {k}$-linear category and $  \psi  $ defines a categorical action of the centralizer  of   $f$ in $G$ on it  by Theorem \ref{thm:tr-cat}.  If $k,g,f \in G$ are pairwise  commutative, we define {\it the $3$-character }  of the $2$-categorical action $\rho$ to be
 \begin{equation}\label{eq:3ch-def}
    \chi_\rho( f,g , k ):= \chi_\psi(g,k) ,
 \end{equation}
the
joint trace of functors $  \psi_g $ and $  \psi_k $ acting on the  $ {k}$-linear category  $ \mathbb{{T}}r_2   \rho_f $, i.e., the trace of  the linear transformation induced by the functor $  \psi_k $    on the  $ {k}$-vector space $ \mathbb{{T}}r     \psi_g $, which we assume to be finite dimensional.

By using the $2$-character formula for $1$-dimensional $2$-representation in proposition 5.1 of \cite{GK}, the   $3$-character of    the $3$-representation $\varrho^c$ for pairwise commutative   $k,g,f \in G$  is  given by
$$
   \chi_{\varrho^c}( f,g , k )= \frac { \Gamma_{k,g}\Gamma_{kg,k^{-1}}}{ \Gamma_{k,1}  \Gamma_{k ,k^{-1}}  },
$$where the expressions $\Gamma_{*,*}$'s are defined by (\ref{eq:2-cocycle}).
It can also be derived from   diagram (\ref{eq:psi-g-graph}).
\section{The induced strict $2$-categorical action on the induced $2$-category}
\subsection{The induced $2$-category}

Let $H\subset  G$  be a subgroup of a finite group $G$ and let $\rho: H\rightarrow \mathcal{V}^*  $ be a strict $2$-categorical action of $H$
on a strict $2$-category $\mathcal{V}$  (cf. definitions at the end of Section 2.4).
${\rm Ind}_H^G(\mathcal{V})$ is a strict $2$-category where

$\bullet$ objects are maps
$
     \vartheta:G\longrightarrow \mathcal{V}_0
$
together with a $1$-isomorphism
 $$
    u_{g,h}: \vartheta(gh)\longrightarrow \rho_{h^{ *}} \vartheta(g)
 $$
for each $g\in G ,h\in H$, satisfying the condition:

(1) $u_{g,1}: \vartheta(g )\longrightarrow \rho_1 \vartheta(g)$ coincides with $\phi_{1 }^{-1}[\vartheta(g)]$;

(2) for each $g\in G ,h_1,h_2\in H$, we have a $2$-isomorphism:
$$\label{eq:Phi-h-h}
    \xy 0;/r.17pc/:
( 15,15)*+{\rho_{(h_1 h_2)^{ *}} \vartheta(g) }="3";
(75,15)*+{\rho_{  h_2^{ *}}\rho_{h_1^{ *}} \vartheta(g) }="4";
(  15,55)*+{\vartheta(gh_1h_2)}="7";
(75,55)*+{\rho_{h^{ *}_2} \vartheta(gh_1) }="8";
{\ar@{->}_{u_{g,h_1 h_2}} "7";"3" };
{\ar@{->} "3";"4"_{ \phi^{-1}_{h_2^{ *},h_1^{ *}  } \vartheta(g)  } };
{\ar@{->}^{\rho_{  h_2^{ *}} u_{g,h_1 }  } "8";"4" };
{\ar@{->} "7";"8"^{ u_{gh_1,h_2}} };
{\ar@{=>}|-{   } (65,48)*{};(25,22)*{}} ;
 \endxy
$$

$\bullet$   $1$-arrows $F  :(\vartheta,u)  \rightarrow(\vartheta',u') $ between objects;

 $\bullet$   $2$-arrows $\gamma: F \rightarrow\widetilde{F}$.

For $k\in G$, the action $ ({\rm ind}_H^G\rho)_k$   on the $2$-category ${\rm Ind}_H^G(\mathcal{V})$  is given by
 $$
\left[ ({\rm ind}_H^G\rho)_k\vartheta\right](g)=\vartheta(k^{-1}g),\qquad\left [  ({\rm ind}_H^G\rho)_ku\right]_{g, h}=u_{k^{-1}g, h},
 $$
for an object $(\vartheta,u)$ in ${\rm Ind}_H^G(\mathcal{V})$.
And  $ ({\rm ind}_H^G\rho)_k(F )$ for a $1$-arrows $F:(\vartheta,u)\rightarrow(\vartheta',u')$ and
$ ({\rm ind}_H^G\rho)_k(\gamma) $ for a $2$-arrow $\gamma: F \rightarrow\widetilde{F}$ can be defined similarly. In general, each commutative diagram in the definition of
the induced category in section 7.1 of \cite{GK} is replaced by a $2$-isomorphism.

We will not write down the definition of the induced $2$-category ${\rm Ind}_H^G(\mathcal{V})$ explicitly. It is a little bit complicated. Since we only work
on finite groups, we can simply identify ${\rm Ind}_H^G(\mathcal{V})$ with $ \mathcal{V}^m$  as a $2$-category, where $m$ is the index of $H$ in $G$.
  For a strict $2$-category $\mathcal{V} $,
 $\mathcal{V}^m$ is also a strict $2$-category with
 $$\begin{array}{rl} {\rm objects}\hskip 3mm
   \mathcal{V}^m_0&:=\{(x_1,\ldots,x_m) ;x_j\in \mathcal{V}_0 \} ,\\  p-{\rm arrows}\hskip 3mm
  \mathcal{V}^m_p&:=\{(\gamma_1,\ldots,\gamma_m) :(x_1,\ldots,x_m)    \rightarrow (y_1,\ldots,y_m) ;\mathcal{V}_p\ni\gamma_j:x_j\rightarrow y_j\},
    \end{array}
     $$ $p=1,2$.
The compositions are defined as
  \begin{equation}\label{eq:V-m}  \begin{split}&
    (\ldots,
\gamma_j,\ldots  )\#_p(\ldots,\gamma_j',\ldots  ):=(\ldots,\gamma_j\#_p \gamma_j',\ldots  ), \end{split} \end{equation}
if $ \gamma_j$ and $\gamma_j'$ are $p$-composable. The axioms for functions $\#_p$ and identities of  $\mathcal{V}^m$ are obviously satisfied.
 The identification ${\rm Ind}_H^G(\mathcal{V})\cong \mathcal{V}^m$  can be obtained by choosing  a system of representatives
 $$
   \mathscr  R=\{r_1,\ldots,r_m\}
 $$
 of left cosets of $H$ in $G$, and associating to each map $\vartheta:G\rightarrow \mathcal{V}_0$
an  object $(\vartheta(r_1),\ldots, \vartheta(r_m))$ in $\mathcal{V}_0^m$.

Let $a_{jk}:\mathcal{V}\rightarrow\mathcal{V}$ be functors such that the $m\times m$ matrix  $F=(a_{jk})$ has only one nonvanishing  entry in each row or column. Then $F$ defines a strict functor from $\mathcal{V}^m $ to $\mathcal{V}^m  $ by
$$ F(\ldots,
\delta_j,\ldots  )=\left(\ldots,
\sum_k a_{jk}(\delta_k),\ldots \right ),
$$
where we write $ \sum_k a_{jk}(\delta_k)$ formally for $\delta_k\in \mathcal{V}_p$, $p=0,1,2$,  since there exists only one term in this sum. But when the $2$-category is $k$-linear, such sums exist. If $\widetilde{F}=(\widetilde{a}_{jk}):\mathcal{V}^m \rightarrow\mathcal{V}^m  $  is another such functor, then we have
$$
 (F\#_0\widetilde{F})_{jk}:= \sum_l a_{jl}\widetilde{a}_{l k} .
$$
Moreover, a  pseudonatural transformation $\phi: F\rightarrow\widetilde{F}$ is given by an $m\times m$ matrix  $\phi=(\phi_{jk})$ with $\phi_{jk}:a_{jk}\rightarrow\widetilde{a}_{jk}$   a pseudonatural transformation between functors on $\mathcal{V}$. Let $\widetilde{\phi}=(\widetilde{\phi}_{jk}): \widetilde{F}\rightarrow\widetilde{\widetilde{F}}$ be another      pseudonatural transformation. Then their composition is $\phi\#_1\widetilde{\phi}:=(\phi_{jk}\#_1\widetilde{\phi}_{jk})$.
\subsection{The induced strict $2$-categorical action  }
Suppose that $\rho$ is a  strict $2$-categorical  action of $H $ on the $2$-category $\mathcal{V}$. For $f\in G$, we define
 $({\rm ind}_H^G\rho)_f $ to be  a  functor from $ \mathcal{V}^m $ to $
\mathcal{V}^m $ as follows.
It is an $m\times m$ matrix whose entries are functors from $ \mathcal{V} $ to $
\mathcal{V} $, i.e., the $(j ,i )$-entry is
\begin{equation}\label{eq:ind-rho}
\left [  ({\rm ind}_H^G\rho)_f\right]_{j i}=\left\{\begin{array}{l}\rho_h, \qquad {\rm if}\quad fr_i=r_jh,\quad{\rm for} \quad  h\in H,\\0,\qquad\quad{\rm
otherwise}.
\end{array}
\right.
\end{equation}
This corresponds to the fact that for a map $\vartheta:G\rightarrow \mathcal{V}_0$,  we have $[({\rm ind}_H^G\rho)_f(\vartheta)](r_j)=\vartheta(f^{-1}r_j)$  and $\vartheta(f^{-1}r_j)=\vartheta( r_ih^{-1})\rightarrow\rho_h  \vartheta(r_i) $.
It is clear that only one entry in each row or column of the $m\times m$ matrix $ ({\rm ind}_H^G\rho)_f $ is nonvanishing.   Then,
\begin{equation}\label{eq:Ind-act}\begin{split}   ({\rm ind}_H^G\rho)_f (\ldots,\delta_j,\ldots)=\left(\ldots,\sum \left (  ({\rm ind}_H^G\rho)_f\right)_{j i}(\delta_{i}) ,\ldots\right),
\end{split}\end{equation}
where $\delta_j\in \mathcal{V}_p$, for $p=0,1,2$.

For simplicity, from now on  the induced object  will be denoted by the hatted one, e.g.  ${\rm ind}_H^G\rho $ is  denoted by $\widehat{\rho}$.  The composition functor  $ \widehat{\rho}_{g_2}$ and $ \widehat{ \rho}_{g_1}$  is defined as
 \begin{equation}\label{eq:hat-rho-1-2}
\left (\widehat{\rho}_{g_2} \widehat{ \rho}_{g_1}\right)_{k i}=\left\{\begin{array}{l}\rho_{h_2} \rho_{h_1} , \qquad\quad {\rm if}\quad
g_1r_i=r_jh_1,g_2r_j=r_kh_2,\,\quad{\rm for}\quad{\rm some} \quad   h_1,h_2\in H,\\0,\qquad\qquad\qquad{\rm  otherwise}.
\end{array}
\right.
\end{equation}Thus  $\widehat{\rho}_{g_2} \widehat{ \rho}_{g_1}$ can be viewed as the product of two $m\times m$ matrices of functors.
On the other hand,
 \begin{equation}\label{eq:hat-rho}
(\widehat{ \rho}_{g_2 g_1}  )_{k i}= \rho_{h_2 h_1}
 \end{equation}
 since $(g_2 g_1)r_i=r_k(h_2h_1)$ by (\ref{eq:hat-rho-1-2}). We define the pseudonatural transformation (as a  $2$-isomorphism in  $ (\mathcal{V}^m)^* $)
 $$
\widehat{ \phi}_{g_2, g_1} : \widehat{ \rho}_{g_2  } \widehat{ \rho}_{  g_1}\xy0;/r.22pc/:
  (0,0)*+{ }="1";
(10,0)*+{ }="2";
{\ar@{=>}   "1"+(0, 0);"2"+(0, 0)};   \endxy \widehat{ \rho}_{g_2 g_1},$$
as   the  $m\times m$ matrix   whose $(k,i)$-entry
 is the $2$-isomorphism
 \begin{equation}\label{eq:hat-phi}
(\widehat{\phi}_{g_2, g_1} )_{k i}= \phi_{h_2, h_1}: \rho_{h_2  }\rho_{  h_1}\longrightarrow  \rho_{h_2 h_1}  ,
 \end{equation} and all other entries vanish.
For $g_1, g_2,g_3\in G$, the   $3$-isomorphism  in  $ (\mathcal{V}^m)^* $
 $$
 \widehat{\Phi}_{g_3, g_2,g_1} :[ \widehat{\rho}_{g_3}\#_0  \widehat{\phi}_{g_2, g_1} ] \#_1  \widehat{\phi}_{g_3, g_2 g_1}
 \xy0;/r.22pc/:
  (0,0)*+{ }="1";
(10,0)*+{ }="2";
{\ar@3{->}   "1"+(0, 0);"2"+(0, 0)};   \endxy
[  \widehat{\phi}_{g_3, g_2  }\#_0 \widehat{\rho}_{g_1}] \#_1 \widehat{{\phi}}_{g_3g_2, g_1}
$$
is  a modification. Write
$$
  g_3r_k=r_lh_3
$$
for some $h_3\in H$. Then we have
\begin{equation}\label{eq:hat-phi-rho}
   [ \widehat{\rho}_{g_3}\#_0  \widehat{\phi}_{g_2, g_1} ]_{li}=\rho_{h_3}\#_0  {\phi}_{h_2, h_1} \quad{\rm and }\quad   [  \widehat{\phi}_{g_3, g_2  }\#_0 \widehat{\rho}_{g_1}]_{li}   =  {\phi}_{h_3, h_2  }\#_0  {\rho}_{h_1} ,
\end{equation}
 etc.. We define $\widehat{\Phi}_{g_3, g_2,g_1}$ as an  $m\times m$ matrix   whose $(l,i)$-entry
 is the modification (as a $3$-isomorphism  in  $  \mathcal{V}^* $)
 $$(\widehat{\Phi}_{g_3, g_2,g_1})_{li}=
\Phi_{h_3, h_2,h_1}: [  {\rho}_{h_3}\#_0   {\phi}_{h_2, h_1} ] \#_1   {\phi}_{h_3, h_2 h_1}\xy0;/r.22pc/:
  (0,0)*+{ }="1";
(10,0)*+{ }="2";
{\ar@3{->}   "1"+(0, 0);"2"+(0, 0)};
   \endxy  [  {\phi}_{h_3, h_2  }\#_0  {\rho}_{h_1}] \#_1 {\phi}_{h_3h_2, h_1},
 $$ and all other entries vanish.

 For $g_4\in G$, write
 $$    g_4r_{l }=r_th_4$$
for some $h_4\in H$. The $( t,i)$-entry of the $m\times m$ matrix
 $  [\widehat{\rho}_{g_4}\#_0\widehat{\Phi}_{g_3,g_2,g_1}]\#_1\widehat{ \phi}_{g_4,g_3g_2 g_1}  $ is the modification
$$
 [\rho_{h_4}\#_0\Phi_{h_3,h_2,h_1}]\#_1 \phi_{h_4,h_3h_2 h_1}
$$
of $  \mathcal{V}  $, and similarly we obtain  other terms in the   $3$-cocycle condition (\ref{eq:4-cocycle})
 for $\widehat{\Phi}$. So  the   $3$-cocycle condition (\ref{eq:4-cocycle})
 for $\widehat{\Phi}$ is reduced to the $3$-cocycle condition
 for $ {\Phi}$.  Note that functors or pseudonatural transformation or  modification we consider are matrices, of which entries are  in a strict $3$-subcategory $\mathcal{W}$ of $\mathcal{V}^*$. It follows from the strictness of $\mathcal{W}  $     that    $\widehat{\rho}$ is a strict $2$-categorical action of $G$ on $\mathcal{V}^m\approx{\rm Ind}_H^G(\mathcal{V})$.
\section{The $3$-character  of the induced strict $2$-categorical action }
\subsection{The $2$-categorical trace of the induced strict $2$-categorical action}\label{decomposition} As above
$\rho$ is a strict  $2$-categorical  action of $H $ on the $2$-category $\mathcal{V}$.
 Let $\mathcal{R}$ be a system of representatives of $G/H$. We have the decomposition
 $$
   \mathcal{R}=\mathcal{R}'\cup \mathcal{R}'',
 $$
where $\mathcal{R}':=\left \{r\in \mathcal{R}; r^{-1}fr\in H\right\}$,  $\mathcal{R}'':=\left \{r\in \mathcal{R}; r^{-1}fr\notin H\right\}$.
 For a fixed element  $f$ of $ G$, the decomposition
 $$
    [f]_G\cap H=[h_1]_H\cup \cdots [h_n]_H
 $$
 induces a decomposition
 $$
   \mathcal{R}'=\bigcup_{i=1}^n \mathcal{R}_i \qquad {\rm with}  \qquad\mathcal{R}_i=\left\{r\in \mathcal{R};
   r^{-1}fr\in [h_i]_H\right\}.
 $$
 For fixed $i$, we pick $r_i\in \mathcal{R}_i$ and write $h_i=r_i^{-1}fr_i$. For $r\in  \mathcal{R}_i$, we have $ r^{-1}fr=h^{-1}h_ih$ for some $h\in H$. From
 now on, by replacing $r$ by $rh^{-1}$ in the representatives of $\mathcal{R}_i\subset G/H$, we can assume
 \begin{equation}\label{eq:Ri}
  r^{-1}fr= h_i \qquad {\rm for} \quad {\rm all} \qquad r\in \mathcal{R}_i.
 \end{equation}
Denote
 $$
  m_i:=|\mathcal{R}_i| ,  \qquad m':=|\mathcal{R}'|=\sum_{i=1}^n m_i  ,  \qquad m'':=|\mathcal{R}''|,  \qquad m:=m'+m'' .$$
It follows from the definition (\ref{eq:ind-rho})-(\ref{eq:Ind-act}) of $\widehat{\rho}_f$ that
 \begin{equation}\label{eq:rho-f}
    \widehat{\rho}_f=\left(
    \begin{array}{ccccc} A_{00}&A_{01}&A_{02}&\cdots& A_{0n}\\A_{10}& A_{11} &0 &\cdots &0 \\ A_{20}&0&A_{22} &\cdots& 0 \\\vdots&\vdots&\vdots &\ddots& \vdots  \\A_{n0}&0 &0&\cdots& A_{nn}
\end{array}
\right),\qquad  A_{i i}=\left(
    \begin{array}{lll}  \rho_{h_i}&& \\ &\ddots& \\ &&\rho_{h_i}
\end{array}
\right)_{m_i\times m_i},
 \end{equation}
  where $i=1,\ldots, n  $,   and  $A_{00}$ is a off-diagonal $m''\times m''$ matrix. So an object  of $ \mathbb{T}r_2\widehat{\rho}_f$ is a pseudonatural transformation $\chi:1_{\mathcal{V}^m}\rightarrow  \widehat{\rho}_f$ of the form
    \begin{equation}\label{eq:Trf-0}
    \chi=\left(
    \begin{array}{llll} 0_{m''\times m''}&&& \\&\ddots && \\& &D_i& \\& &&\ddots
\end{array}
\right),\qquad D_i=\left(
    \begin{array}{lll}  \chi_{m_1+\cdots+m_{i-1}+1}&& \\ &\ddots& \\ &&\chi_{m_1+\cdots+m_{i }}
\end{array}
\right),
\end{equation}where $\chi_{m_1+\cdots+m_{i-1}+\alpha}:1_{\mathcal{V}}\rightarrow \rho_{h_i}$ is an object of $ \mathbb{T}r_2 \rho_{h_i}$,
  $\alpha=1,\ldots,m_i $. Also morphisms
   in $ \mathbb{T}r_2\widehat{\rho}_f$  are   diagonal. So we have
  $$
 \mathbb{T}r_2\widehat{\rho}_f=\bigoplus_{i=1}^n  (\mathbb{T}r_2 {\rho}_{h_i})^{m_i} .$$

 \begin{lem}\label{lem:Ri} {\rm (\cite{GK}, Lemma 7.7)} Left multiplication with $r_i^{-1}$ maps $\mathcal{R}_i$ into a system of representatives of
 $C_G(h_i)/C_H(h_i)$.
   \end{lem}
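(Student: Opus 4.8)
The plan is to prove that left multiplication by $r_i^{-1}$ carries $\mathcal{R}_i$ bijectively onto a full system of representatives of $C_G(h_i)/C_H(h_i)$; concretely I will verify that the image lies in $C_G(h_i)$, that distinct representatives land in distinct left cosets of $C_H(h_i)$, and that every such coset is met.

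First I would rewrite the normalization (\ref{eq:Ri}): for each $r\in\mathcal{R}_i$ the relation $r^{-1}fr=h_i$ is the same as $f=rh_ir^{-1}$, and in particular $f=r_ih_ir_i^{-1}$. With this in hand, for $r\in\mathcal{R}_i$ one computes $(r_i^{-1}r)^{-1}h_i(r_i^{-1}r)=r^{-1}(r_ih_ir_i^{-1})r=r^{-1}fr=h_i$, so that $r_i^{-1}r\in C_G(h_i)$. Thus the map is well defined into $C_G(h_i)$.

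For injectivity on cosets, I would note that $r_i^{-1}r$ and $r_i^{-1}r'$ (with $r,r'\in\mathcal{R}_i$) lie in the same left coset of $C_H(h_i)$ exactly when $(r_i^{-1}r)^{-1}(r_i^{-1}r')=r^{-1}r'\in C_H(h_i)$. Since $C_H(h_i)\subseteq H$ this gives $r^{-1}r'\in H$, i.e. $rH=r'H$; because $\mathcal{R}$ is a system of representatives of $G/H$, it follows that $r=r'$. Hence distinct elements of $\mathcal{R}_i$ determine distinct cosets of $C_H(h_i)$ in $C_G(h_i)$.

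The main step, and the place where the normalization (\ref{eq:Ri}) is indispensable, is surjectivity. Given $c\in C_G(h_i)$, I would take the unique decomposition $r_ic=rh$ with $r\in\mathcal{R}$ and $h\in H$, so that $r=r_ich^{-1}$, and then compute $r^{-1}fr=hc^{-1}(r_i^{-1}fr_i)ch^{-1}=hc^{-1}h_ich^{-1}=hh_ih^{-1}$, using $c\in C_G(h_i)$. This lies in $[h_i]_H$, so $r\in\mathcal{R}_i$; but then the normalization forces $r^{-1}fr=h_i$, whence $hh_ih^{-1}=h_i$, i.e. $h\in C_H(h_i)$. Consequently $r_i^{-1}r=ch^{-1}$ lies in the same left coset of $C_H(h_i)$ as $c$, so every coset is hit. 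Together these three points show that $r\mapsto r_i^{-1}r$ identifies $\mathcal{R}_i$ with a system of representatives of $C_G(h_i)/C_H(h_i)$ (in particular $m_i=[C_G(h_i):C_H(h_i)]$). The only delicate point is recognizing that, without the normalization, the computation only yields that $r^{-1}fr$ is $H$-conjugate to $h_i$; it is precisely (\ref{eq:Ri}) that upgrades this to $h\in C_H(h_i)$ and thereby closes the surjectivity argument.
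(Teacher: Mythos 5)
Your proof is correct and complete: the three steps (landing in $C_G(h_i)$, injectivity on cosets via $C_H(h_i)\subseteq H$ and the coset-representative property of $\mathcal{R}$, and surjectivity via the decomposition $r_ic=rh$ combined with the normalization (\ref{eq:Ri}) to force $h\in C_H(h_i)$) are exactly the standard argument. The paper itself gives no proof, citing Lemma 7.7 of \cite{GK} instead, and your argument is the one that citation stands for; your observation that (\ref{eq:Ri}) is what upgrades $hh_ih^{-1}\in[h_i]_H$ to $h\in C_H(h_i)$ correctly identifies the one point where the normalization is essential.
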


      For $g\in C_G(f)$ and $r\in \mathcal{R}_i$, we write
      \begin{equation}\label{eq:g-r}
         gr=\widetilde{r}h,
      \end{equation}
     for some $\widetilde{r}\in \mathcal{R}$ and $h\in H$. Also,  $r$ is uniquely determined by $\widetilde{r}$ for fixed $g$. Then
                      $$
              \widetilde{r}^{-1} f\widetilde{r}=hr^{-1}g^{-1}fgrh^{-1}=hh_ih^{-1}
         $$
 by (\ref{eq:Ri}). Hence $\widetilde{r}\in \mathcal{R}_i$ and
    so  $ \widetilde{r}^{-1}f\widetilde{r}=h_i$ by   assumption (\ref{eq:Ri}). It follows that $h\in C_H(h_i)$. Then
      $$\begin{array}{rll}
         gr&=\widetilde{r}h\qquad\qquad  &{\rm gives}\qquad (\widehat{\rho}_g )_{ \widetilde{r}r}=\rho_h ,\\
         f {r}&= {r}h_i\qquad\qquad  &{\rm gives}\qquad (\widehat{\rho}_f )_{  {r} {r}}=\rho_{h_i},\\
          g^{-1}\widetilde r&={r}h^{-1}\qquad &{\rm gives}\qquad (\widehat{\rho}_{g^{-1}})_{r \widetilde{r}}=\rho_{h^*},
   \end{array}  $$
   and all other entries vanish. Thus
   $$
    \left (\widehat{\rho}_g\widehat{\rho}_f  \widehat{\rho}_{g^*} \right)_{\widetilde{r}\widetilde{r}}=\rho_h\rho_{h_i}\rho_{h^*} £¬
   $$
and  all other entries in the last $(m'\times m')$-block vanish  (see  (\ref{eq:rho-f})).

We denote by $ \widetilde{ \psi}$ the categorical action of the centralizer $  C_G(f)$ of $f$  on the category $ \mathbb{T}r_2\widehat{\rho}_f$. By definition (\ref{eq:psi-g}), $ \widetilde{ \psi}_g$ for $g\in C_G(f)$ is an invertible functor as follows. For      a pseudonatural transformation diag$(\ldots,\chi_r,\ldots)=\chi :1_{\mathcal{V}^m}\rightarrow\widehat{\rho}_f$
in (\ref{eq:Trf-0}),  $ \widetilde{ \psi}_g( {\chi})$ is a pseudonatural transformation  given by
$$
   {\rm diag}(1_{\mathcal{V}}, \ldots,1_{\mathcal{V}}  ) \xrightarrow{\widehat{\phi}_{g ,g^*}^{-1}} \widehat{\rho}_g  \widehat{\rho}_{g^*}\xrightarrow{ \widehat{\rho}_{g } \#_0  {\chi}\#_0\widehat{\rho}_{g^* }} \widehat{\rho}_g\widehat{\rho}_f \widehat{\rho}_{g^*}\xrightarrow{\widehat{\phi}_{g ,f}\#_0 \widehat{\rho}_{g^* } } \widehat{\rho}_{g f } \widehat{\rho}_{g^*} \xrightarrow{\widehat{\phi}_{g f,g^*}  } \widehat{\rho}_{g  f  g^*}=\widehat{\rho}_{ f   },
$$
 where the first $m''$ diagonal terms  of $ \widetilde{ \psi}_g( {\chi})$ must vanish, and other diagonal terms are
$$\begin{array}{rl}
  \left( \widehat{\phi}_{g ,g^*}^{-1}\right)_{\widetilde{r}\widetilde{r}}&=\phi_{h ,h^*}^{-1} : 1_{\mathcal{V}}\rightarrow \rho_h\rho_{h^*} ,\\
 \left (\widehat{\rho}_{g  }\#_0  {\chi}\#_0 \widehat{\rho}_{g^* }\right)_{\widetilde{r}\widetilde{r}}&= (\widehat{\rho}_{g })_{\widetilde  r{r}}\#_0 {\chi}_{ {r} {r}}\#_0 (\widehat{\rho}_{g^* } )_{{r}\widetilde r}=  \rho_h\#_0 \chi_{ {r}}\#_0\rho_{h^*}: \rho_h\rho_{h^*} \rightarrow \rho_h\rho_{h_i}\rho_{h^*} ,\\
 \left (\widehat{\phi}_{g ,f}\#_0 \widehat{\rho}_{g^* }\right)_{\widetilde{r}\widetilde{r}}&=\phi_{h ,h_i }\#_0  \rho_{h^*}: \rho_h\rho_{h_i}\rho_{h^*}\rightarrow \rho_{h h_i} \rho_{h^*}\\
\left ( \widehat{\phi}_{g f,g^*}  \right)_{\widetilde{r}\widetilde{r}}&=\phi_{h  h_i,h^*}:\rho_{h  h_i} \rho_{h^*}\rightarrow\rho_{h  h_i h^*}=\rho_{  h_i}.
\end{array}  $$
All other entries vanish by definitions (\ref{eq:hat-rho})-(\ref{eq:hat-phi}). Therefore, $  \widetilde{\psi}_g( {\chi)}$ is a diagonal $m\times m$ matrix of pseudonatural transformations, whose $(\widetilde{r},\widetilde{r})$-entry for $\widetilde{r}\in \mathcal{R}'$ is
\begin{equation}\label{eq:r-r-entry}
 \left(   \widetilde{ {\psi}}_g( {\chi})\right)_{\widetilde{r}\widetilde{r}}=\phi_{h ,h^*}^{-1}  \#_1
  [\rho_{h }\#_0 \chi_{ {r}}\#_0 \rho_{h^*} ]\#_1
  [\phi_{h ,h_i }\#_0  \rho_{h^*}] \#_1
  \phi_{h h_i,h^*}: 1_{\mathcal{V}}\rightarrow\rho_{   h_i  },
\end{equation}
and vanishes for all $\widetilde{r}\in \mathcal{R}''$.

Now denote by $\psi^{(i)}$   the categorical action of the centralizer $  C_H( {h_i})$     on the category $ \mathbb{T}r_2 {\rho}_{h_i}$, which is  constructed from the strict $2$-categorical action $\rho$ of $H$  on $\mathcal{V}$. Recall that by definition (\ref{eq:psi-g}), we have a functor  $ {\psi}_h^{(i)}$ for each $h\in C_H( {h_i})$.
For $h\in C_H(h_i)$ and a pseudonatural transformation $\omega:1_{\mathcal{V}}\rightarrow\rho_{h_i}$ , the pseudonatural transformation $\psi_h^{(i)}(\omega)  $ is   again by   definition (\ref{eq:psi-g})
the composition   of the following   pseudonatural transformations between functors:
$$
  1_{\mathcal{V}}\xrightarrow{\phi_{h ,h^*}^{-1}}\rho_{h  }  \rho_{h^* } \xrightarrow{\rho_{h  }\#_0 \omega\#_0 \rho_{h^* } }\rho_{h  }  \rho_{h_i} \rho_{h^* }\xrightarrow{\phi_{h ,h_i}\#_0 \rho_{h^* } }\rho_{h   h_i}  \rho_{h^* }\xrightarrow{\phi_{h  h_i,h^*}  } \rho_{h   h_i h^* }=\rho_{  h_i}.
$$
Then we see that (\ref{eq:r-r-entry}) can be written as
\begin{equation}\label{eq:hat-psi-rr}
   \left (   \widetilde{ \psi }_g\left( {\chi}\right)\right)_{\widetilde{r}\widetilde{r}}=\psi_h^{(i)}(\chi_{ {r}}): 1_{\mathcal{V}}\rightarrow\rho_{   h_i  },
\end{equation}
with $r,\widetilde{r}\in R_i$ and $h$ determined by (\ref{eq:g-r}). Namely, the resulting $\widetilde{r}$-th diagonal term is   the image of the $ {r}$-th diagonal term under the action of the functor $\psi_h^{(i)}$.

Note that we have the identification
   \begin{equation}\label{eq:Ind-C-G}
  {\rm Ind}_{C_H(h_i)}^{C_G(h_i)}    \mathbb{T}r_2 \rho_{h_i}\cong (\mathbb{T}r_2 \rho_{h_i})^{m_i},
   \end{equation}
   since $|C_G(h_i)/C_H(h_i)|=m_i$ by Lemma \ref{lem:Ri}, and that (\ref{eq:g-r}) is equivalent to
   \begin{equation}\label{eq:h-C-G}
     ( r_i^{-1}gr_i)(r_i^{-1}r)=(r_i^{-1}\widetilde{r})h, \qquad h\in C_H(h_i).
   \end{equation} The coset $  C_G(h_i)/C_H(h_i)$ are represented by $r_i^{-1}r$ for $r\in \mathcal{R}_i$ by Lemma \ref{lem:Ri} again, and an element of $  C_G(h_i)$ can always be
written as $ r_i^{-1}gr_i$ for some $g\in C_G(f)$. As above we denote by $\widehat{\psi^{(i)}}$   the induced   action of the centralizer $  C_G( {h_i})$  of $h_i$   on the category $ {\rm Ind}_{C_H(h_i)}^{C_G(h_i)}\mathbb{T}r_2 \rho_{h_i}$. Recall    the definition (\ref{eq:ind-rho})-(\ref{eq:Ind-act}) of the induced   action. So the action of $ r_i^{-1}gr_i\in C_G(h_i)$  on the induced category (\ref{eq:Ind-C-G})
 is  given by  the functor $\widehat{\psi^{(i)}}_{r_i^{-1}gr_i} $ on $(\mathbb{T}r_2 \rho_{h_i})^{m_i}$ with
   \begin{equation}\label{eq:psi-rr}
 \left(\widehat{\psi^{(i)}}_{r_i^{-1}gr_i} (\chi) \right )_{r_i^{-1}\widetilde{r},r_i^{-1} {r} }=\psi_h^{(i)}\left(\chi_{{r_i^{-1} r}}\right) : 1_{\mathcal{V}}\rightarrow\rho_{   h_i  }.
   \end{equation} for $\chi\in(\mathbb{T}r_2 \rho_{h_i})^{m_i} $, where $h$ is given by (\ref{eq:h-C-G}), and all other entries vanish. Here we use the expressions $ r_i^{-1} r$ as indices of the components of $(\mathbb{T}r_2 \rho_{h_i})^{m_i}$.
Comparing (\ref{eq:hat-psi-rr}) with (\ref{eq:psi-rr}), we find that the action of $g\in C_G(f)$ on $ (\mathbb{T}r_2 \rho_{h_i})^{m_i}$ coincides with the induced action of $ r_i^{-1}gr_i\in C_G(h_i)$ on it, and so
    the action of the centralizer $C_G(f)$ on $\mathbb{T}r_2  \widehat{\rho}_f$ decomposes into actions on
   \begin{equation}\label{eq:decomposition}
      \bigoplus_{ i }  (\mathbb{T}r_2 \rho_{h_i})^{m_i}= \bigoplus_{ i } {\rm Ind}_{C_H(h_i)}^{C_G(h_i)}    \mathbb{T}r_2 \rho_{h_i}.
   \end{equation}

Recall that the {\it initia groupoid} $\Lambda(G)$ of a group $G$ has as objects, the elements of $G$, and for two such elements $u$ and $v$,
   there is one morphism in $\Lambda(G)$ from $u$ to $v$ for every   $g\in G$ such that $v=gug^{-1}$.  Note that the   initia groupoid  $\Lambda(G)$ is equivalent to the groupoid with the set of objects consisting of the conjugacy classes $[g_i]$  and the set of morphisms consisting of $g:[g_i]\rightarrow [g_i]$ for   $g\in C_G(g_i)$. Therefore the above result can be summarized as follows.
\begin{thm}  Let $\mathcal{V}$ be a $k$-linear   $2$-category. The $2$-categorical trace $ \mathbb{T}r_2$ takes induced strict $2$-categorical action into the induced
 categorical action of the associated initia groupoids, i.e.  (\ref{eq:3-trace}) holds.
   \end{thm}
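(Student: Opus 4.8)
The plan is to read off both sides of (\ref{eq:3-trace}) as categorical actions of the initia groupoid $\Lambda(G)$ and to match them fiberwise. Since $\Lambda(G)$ is equivalent to the groupoid whose objects are the conjugacy classes $[g_i]$ and whose morphisms $[g_i]\to[g_i]$ are the elements $g\in C_G(g_i)$, a categorical action of $\Lambda(G)$ is determined up to equivalence by its restriction to each representative: namely, by the category sitting over a fixed $f$ together with its categorical $C_G(f)$-action. So it suffices to fix $f$ and compare, on the two sides, the category over $f$ and the action of $C_G(f)$ on it, which on the left is $\mathbb{T}r_2\widehat{\rho}_f$ with the functors $\widetilde{\psi}_g$ and natural isomorphisms $\Gamma_{h,g}$ furnished by Theorem \ref{thm:tr-cat}.

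First I would decompose $\mathbb{T}r_2\widehat{\rho}_f$. Using the block form (\ref{eq:rho-f}) of the induced endofunctor $\widehat{\rho}_f$, any pseudonatural transformation $1_{\mathcal{V}^m}\to\widehat{\rho}_f$ is forced to be the diagonal matrix (\ref{eq:Trf-0}) supported on the diagonal blocks $A_{ii}={\rm diag}(\rho_{h_i},\ldots,\rho_{h_i})$, and the same holds for morphisms. This yields the direct-sum decomposition $\mathbb{T}r_2\widehat{\rho}_f=\bigoplus_{i=1}^n(\mathbb{T}r_2\rho_{h_i})^{m_i}$ indexed by the $H$-conjugacy classes appearing in $[f]_G\cap H=[h_1]_H\cup\cdots\cup[h_n]_H$.

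Next I would identify each summand with an induced category and match the group actions. By Lemma \ref{lem:Ri}, left multiplication by $r_i^{-1}$ carries $\mathcal{R}_i$ to a set of representatives of $C_G(h_i)/C_H(h_i)$; as $|C_G(h_i)/C_H(h_i)|=m_i$, this gives the identification (\ref{eq:Ind-C-G}). The core computation is to evaluate $\widetilde{\psi}_g$ on a diagonal object via definition (\ref{eq:psi-g}) together with the matrix rules (\ref{eq:hat-rho})-(\ref{eq:hat-phi}): writing $gr=\widetilde{r}h$ as in (\ref{eq:g-r}), one finds that the $(\widetilde{r},\widetilde{r})$-entry is exactly $\psi_h^{(i)}(\chi_r)$, that is (\ref{eq:hat-psi-rr}). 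Comparing this with the defining formula (\ref{eq:psi-rr}) of the induced action $\widehat{\psi^{(i)}}_{r_i^{-1}gr_i}$, and noting that (\ref{eq:g-r}) is equivalent to (\ref{eq:h-C-G}) under the reindexing $r\mapsto r_i^{-1}r$, the two agree entrywise with the same $h\in C_H(h_i)$ governing both sides. Hence the $C_G(f)$-action on the $i$-th summand coincides with the induced $C_G(h_i)$-action on ${\rm Ind}_{C_H(h_i)}^{C_G(h_i)}\mathbb{T}r_2\rho_{h_i}$, and summing over $i$ gives (\ref{eq:decomposition}), which is precisely the fiber of ${\rm Ind}_{\Lambda(H)}^{\Lambda(G)}\mathbb{T}r_2(\rho)$ over $f$.

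I expect the main obstacle to be the index bookkeeping in this matching step, and in particular establishing it at the level of the coherence data rather than merely the underlying functors: one must verify not only that $\widetilde{\psi}_g$ corresponds to $\widehat{\psi^{(i)}}_{r_i^{-1}gr_i}$, but that the natural isomorphisms $\Gamma_{h,g}$ of Theorem \ref{thm:tr-cat} transport correctly under the reindexing of Lemma \ref{lem:Ri}, so that the associativity data of the two induced categorical actions agree. This is exactly where the compatibility of the $H$-element $h$ determined by $gr=\widetilde{r}h$ with the coset structure of $C_G(h_i)/C_H(h_i)$ must be used, and it is the one place where a genuinely non-formal check is required; the remaining steps are the essentially mechanical assembly of the per-class data into the groupoid-level statement (\ref{eq:3-trace}).
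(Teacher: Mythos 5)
Your proposal follows essentially the same route as the paper: the block form of $\widehat{\rho}_f$ forcing objects of $\mathbb{T}r_2\widehat{\rho}_f$ to be diagonal, the resulting splitting $\bigoplus_i(\mathbb{T}r_2\rho_{h_i})^{m_i}$, the identification of each summand with ${\rm Ind}_{C_H(h_i)}^{C_G(h_i)}\mathbb{T}r_2\rho_{h_i}$ via Lemma \ref{lem:Ri}, and the entrywise matching of $\widetilde{\psi}_g$ with $\widehat{\psi^{(i)}}_{r_i^{-1}gr_i}$. The compatibility of the coherence data $\Gamma_{h,g}$ under the reindexing, which you flag as the one non-formal check remaining, is likewise left implicit in the paper, so your account matches its proof.
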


   \begin{rem} Even for the  categorical action, Section 4 above and the present subsection provide some details not written down explicitly in   section 7.2 of \cite{GK}.      \end{rem}
\subsection{The $3$-character formula}

Recall the  $2$-character formula for an  induced categorical action.
\begin{thm} \label{thm:2-ch}  {\rm (\cite{KV}, Corollary 7.6)} Let $\varrho$ be a categorical action of a subgroup $H$ of a finite group $G$ on a  $ {k}$-linear category $\mathcal{W}$.
Suppose that $\mathbb{T}r  \varrho_h $
is finite dimensional for each $h\in H$. Then the $2$-character of the induced categorical action of $G$ is given by
\begin{equation}
   \chi_{\rm ind}(f,g )=\frac 1{|H|}\sum_{\substack {s\in G\\ s^{-1}(f,g)s\in H\times H}} \chi_\varrho(s^{-1}fs,s^{-1}gs)
\end{equation}
for $g\in C_G(f)$.
   \end{thm}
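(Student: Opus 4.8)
The plan is to compute the trace of the endomorphism induced by $({\rm ind}_H^G\varrho)_g$ on the categorical trace $\mathbb{T}r\,({\rm ind}_H^G\varrho)_f$ by means of the same coset decomposition used for $\mathbb{T}r_2$ in Subsection \ref{decomposition}, now one categorical level down. First I would fix a transversal $\mathcal{R}$ of $G/H$ and split it as $\mathcal{R}=\mathcal{R}'\cup\mathcal{R}''$ with $\mathcal{R}'=\{r:\ r^{-1}fr\in H\}$; writing $[f]_G\cap H=[h_1]_H\cup\cdots\cup[h_n]_H$ and normalizing representatives so that $r^{-1}fr=h_i$ for all $r\in\mathcal{R}_i$, the block structure of (\ref{eq:rho-f}) produces, exactly as in (\ref{eq:decomposition}), the decomposition
\[
\mathbb{T}r\,({\rm ind}_H^G\varrho)_f=\bigoplus_{i=1}^n (\mathbb{T}r\,\varrho_{h_i})^{m_i}=\bigoplus_{i=1}^n {\rm Ind}_{C_H(h_i)}^{C_G(h_i)}\mathbb{T}r\,\varrho_{h_i},\qquad m_i=|\mathcal{R}_i|.
\]

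Next I would identify the linear map induced by $\varrho_g$, for $g\in C_G(f)$, on this space. By the one-categorical analogue of (\ref{eq:g-r})--(\ref{eq:psi-rr}), for $r\in\mathcal{R}_i$ we write $gr=\widetilde{r}h$ with $\widetilde{r}\in\mathcal{R}_i$ and $h\in C_H(h_i)$, and $\varrho_g$ carries the $r$-th summand $\mathbb{T}r\,\varrho_{h_i}$ isomorphically onto the $\widetilde{r}$-th one through the functor on $\mathbb{T}r\,\varrho_{h_i}$ implementing $h$. Thus the induced map is a generalized permutation matrix of blocks, and its trace receives a contribution only from the blocks with $\widetilde{r}=r$, i.e. those $r\in\mathcal{R}_i$ with $r^{-1}gr\in H$ (equivalently $r^{-1}gr\in C_H(h_i)$, since $g$ commutes with $f$). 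On such a block the contribution is the ordinary trace of the map implementing $r^{-1}gr$ on $\mathbb{T}r\,\varrho_{h_i}$, namely the $2$-character $\chi_\varrho(h_i,r^{-1}gr)=\chi_\varrho(r^{-1}fr,r^{-1}gr)$. Summing over $i$ and over the admissible representatives gives
\[
\chi_{\rm ind}(f,g)=\sum_{\substack{r\in\mathcal{R}\\ r^{-1}(f,g)r\in H\times H}}\chi_\varrho(r^{-1}fr,r^{-1}gr).
\]

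Finally I would pass from the sum over the transversal to the sum over all of $G$. Writing each $s\in G$ uniquely as $s=rh'$ with $r\in\mathcal{R}$ and $h'\in H$, conjugation by $h'\in H$ preserves $H$, so $s^{-1}(f,g)s\in H\times H$ if and only if $r^{-1}(f,g)r\in H\times H$; moreover, since $\chi_\varrho$ is a $2$-class function (invariant under simultaneous conjugation), $\chi_\varrho(s^{-1}fs,s^{-1}gs)=\chi_\varrho(r^{-1}fr,r^{-1}gr)$. As each admissible $r$ is produced by exactly $|H|$ elements $s$, dividing by $|H|$ converts the displayed transversal sum into the asserted sum over $G$. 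I expect the second step to be the crux: one must verify carefully that $\varrho_g$ permutes the summands as a block generalized-permutation with precisely the stated diagonal functors, so that the trace localizes on the $g$-fixed cosets; given this and the $2$-class-function invariance, the coset-to-group conversion is routine bookkeeping.
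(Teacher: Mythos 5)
Your argument is correct, but note that the paper does not prove this statement at all: Theorem \ref{thm:2-ch} is imported verbatim from the literature (cited as Corollary 7.6 of \cite{KV}, in fact the induced $2$-character formula of Ganter--Kapranov \cite{GK}), and serves only as an input to the proof of Theorem \ref{thm:3-ch}. What you have written is essentially the standard categorified Frobenius argument, and it is exactly the argument the paper itself carries out one categorical level higher in Section \ref{decomposition}: the block decomposition of $\mathbb{T}r\,({\rm ind}_H^G\varrho)_f$ into $\bigoplus_i(\mathbb{T}r\,\varrho_{h_i})^{m_i}$ mirrors (\ref{eq:rho-f})--(\ref{eq:decomposition}), the observation that $\varrho_g$ acts as a block generalized permutation whose trace localizes on the cosets fixed by $g$ mirrors (\ref{eq:g-r})--(\ref{eq:psi-rr}), and the passage from the transversal sum to the sum over $G$ divided by $|H|$ is the same bookkeeping used at the end of the proof of Theorem \ref{thm:3-ch}. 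The two points you flag as needing care are indeed the right ones: that $\widetilde{r}$ stays in $\mathcal{R}_i$ and $h\in C_H(h_i)$ (which follows from $\widetilde{r}^{-1}f\widetilde{r}=hh_ih^{-1}$ together with the normalization $r^{-1}fr=h_i$), and the simultaneous-conjugation invariance of $\chi_\varrho$, which you must quote from \cite{GK} since it is not proved here. With those two facts granted, your proof is complete and consistent with the paper's methodology.
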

We now state:
 \begin{thm}  \label{thm:3-ch}  Let $H$ be a subgroup of a finite group $G$  and let $\rho$ be a strict $2$-categorical action of  $H$ on the $2$-category $\mathcal{V}$. Let $\psi$ be the categorical actions of the centralizers   on  the $2$-categorical trace. Suppose that $\mathbb{T}r  \psi_h $
is finite dimensional for each $h\in H$. Then the $3$-character of the induced  strict  $2$-categorical action of $G$ is given by
\begin{equation}\label{eq:3-character}
   \chi_{\rm ind}(f,g,k)=\frac 1{|H|}\sum_{\substack {s\in G\\s^{-1}(f,g,k)s\in H\times H\times H}} \chi_\rho(s^{-1}fs,s^{-1}gs,s^{-1}ks)
\end{equation}for $f,g$ and $k $ pairwise commutative.
   \end{thm}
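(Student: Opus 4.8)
The plan is to reduce everything to the already-established $2$-character formula for induced categorical actions (Theorem \ref{thm:2-ch}), exploiting the decomposition of $\mathbb{T}r_2\widehat{\rho}_f$ obtained in Subsection \ref{decomposition}. By the definition \eqref{eq:3ch-def} of the $3$-character, $\chi_{\rm ind}(f,g,k)$ is the joint trace of the commuting functors $\widetilde{\psi}_g$ and $\widetilde{\psi}_k$ on the $k$-linear category $\mathbb{T}r_2\widehat{\rho}_f$. First I would invoke \eqref{eq:decomposition}, which identifies $\mathbb{T}r_2\widehat{\rho}_f$ with $\bigoplus_i(\mathbb{T}r_2\rho_{h_i})^{m_i}=\bigoplus_i{\rm Ind}_{C_H(h_i)}^{C_G(h_i)}\mathbb{T}r_2\rho_{h_i}$, together with the fact (established just before \eqref{eq:decomposition}) that the action of $g\in C_G(f)$ on the $i$-th summand coincides with the induced action $\widehat{\psi^{(i)}}_{r_i^{-1}gr_i}$. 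Since $\widetilde{\psi}_g$ and $\widetilde{\psi}_k$ both preserve this block decomposition, the induced linear maps are block-diagonal and the joint trace is additive, giving $\chi_{\rm ind}(f,g,k)=\sum_i\chi_{\widehat{\psi^{(i)}}}(r_i^{-1}gr_i,r_i^{-1}kr_i)$, where $(r_i^{-1}gr_i,r_i^{-1}kr_i)$ is a commuting pair in $C_G(h_i)$ because $g$ and $k$ commute.

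Next I would apply Theorem \ref{thm:2-ch} to each summand, with ambient group $C_G(h_i)$, subgroup $C_H(h_i)$, and categorical action $\psi^{(i)}$ of $C_H(h_i)$ on $\mathbb{T}r_2\rho_{h_i}$. This yields
\begin{equation*}
\chi_{\widehat{\psi^{(i)}}}(r_i^{-1}gr_i,r_i^{-1}kr_i)=\frac{1}{|C_H(h_i)|}\sum_{\substack{t\in C_G(h_i)\\ t^{-1}(r_i^{-1}gr_i,\,r_i^{-1}kr_i)t\in C_H(h_i)\times C_H(h_i)}}\chi_{\psi^{(i)}}(t^{-1}r_i^{-1}gr_i\,t,\ t^{-1}r_i^{-1}kr_i\,t).
\end{equation*}
Using the definition \eqref{eq:3ch-def} again, now for the action $\rho$ of $H$ with fixed element $h_i$, we have $\chi_{\psi^{(i)}}(a,b)=\chi_\rho(h_i,a,b)$, so each summand becomes $\chi_\rho(h_i,(r_it)^{-1}g(r_it),(r_it)^{-1}k(r_it))$. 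Here I would also record that, by \eqref{eq:Ri}, $(r_it)^{-1}f(r_it)=t^{-1}h_it=h_i$ for $t\in C_G(h_i)$, so the first slot is exactly $(r_it)^{-1}f(r_it)$; and that the conditions $t^{-1}r_i^{-1}gr_it\in C_H(h_i)$ and $t^{-1}r_i^{-1}kr_it\in C_H(h_i)$ amount to $(r_it)^{-1}g(r_it),(r_it)^{-1}k(r_it)\in H$, since these elements already lie in $C_G(h_i)$.

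Finally, the main work is the bookkeeping that repackages $\sum_i\sum_t$ into the single sum $\frac{1}{|H|}\sum_{s\in G}$ of \eqref{eq:3-character}. Writing $s=r_it\eta$ with $t\in C_G(h_i)$ and $\eta\in H$, a short computation shows that $s^{-1}fs\in[h_i]_H$ holds iff $s\in r_iC_G(h_i)H$, and that the map $(t,\eta)\mapsto r_it\eta$ from $C_G(h_i)\times H$ onto this set is exactly $|C_H(h_i)|$-to-one, since two preimages differ by an element of $C_G(h_i)\cap H=C_H(h_i)$. I would then partition the sum in \eqref{eq:3-character} according to the unique $i$ with $s^{-1}fs\in[h_i]_H$, substitute this parametrization, and use the class-function property of $\chi_\rho$ — invariance of the $3$-character under simultaneous $H$-conjugation — to replace $s^{-1}(f,g,k)s=\eta^{-1}(r_it)^{-1}(f,g,k)(r_it)\eta$ by $(r_it)^{-1}(f,g,k)(r_it)=(h_i,(r_it)^{-1}g(r_it),(r_it)^{-1}k(r_it))$. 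The summand is then independent of $\eta$, so the $\eta$-sum contributes a factor $|H|$ that cancels the $1/|H|$; together with the overcounting factor $1/|C_H(h_i)|$ this reproduces exactly the $i$-th term computed above, and summing over $i$ finishes the proof. I expect this last reassembly to be the main obstacle: one must verify that the conjugating element $\eta\in H$ keeps all three entries inside $H$ so that the class-function property genuinely applies, and that the fibre count and the cancellation of $|H|$ are matched correctly with the conditions restricting $g$ and $k$ to $H$.
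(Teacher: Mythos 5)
Your proposal is correct and follows essentially the same route as the paper: the same reduction via the decomposition (\ref{eq:decomposition}) to $\chi_{\rm ind}(f,g,k)=\sum_i\chi_{\widehat{\psi^{(i)}}}(r_i^{-1}gr_i,r_i^{-1}kr_i)$, the same application of Theorem \ref{thm:2-ch} to each block, and the same identification $\chi_{\psi^{(i)}}(a,b)=\chi_\rho(h_i,a,b)$. Your final reassembly via the parametrization $s=r_it\eta$ and the $|C_H(h_i)|$-to-one fibre count is just a repackaging of the paper's step of summing over all $h\in[h_i]_H$ with weight $1/|[h]_H|$ and using $|[h]_H|\,|C_H(h)|=|H|$, both resting on the same invariance of the summand under simultaneous $H$-conjugation.
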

 \begin{proof}
By the decomposition (\ref{eq:decomposition}) of the action of $C_G(f)$ on $\mathbb{T}r_2  \widehat{\rho}_f$ and (\ref{eq:hat-psi-rr})-(\ref{eq:psi-rr}), we have
 $$
        \chi_{\rm ind}(f,g,k) =\sum_{i=1}^{n}\chi_{\widehat{\psi^{(i)}}  }   (r_i^{-1}gr_i,r_i^{-1}kr_i).  $$
 Now apply Theorem \ref{thm:2-ch} to the categorical action $\widehat{\psi^{(i)}} $ (\ref{eq:psi-rr}) of $C_G(h_i)$,  which is  induced  from the categorical action $\psi^{(i)}  $ of $C_H(h_i)$ on $\mathbb{T}r_2 \rho_{h_i}$,  to get
$$
  \chi_{\rm ind}(f,g,k)
  =\sum_{i=1}^{n}\frac 1{|C_H(h_i)|}\sum_{\substack {t\in C_G(h_i)\\ t^{-1}r_i^{-1}(g , k)r_it\in C_H(h_i)\times C_H(h_i)}} \chi_{\psi^{(i)}   }( t^{-1} r_i^{-1}gr_it,t^{-1}r_i^{-1}kr_it).
  $$ Recall that $\psi^{(i)}  $ is the  categorical action of $C_H(h_i)$ on $\mathbb{T}r_2 \rho_{h_i}$ constructed  from the strict $2$-categorical action $\rho$ of $H$. So we have
  $$
     \chi_{\psi^{(i)}   }( t^{-1} r_i^{-1}gr_it,t^{-1}r_i^{-1}kr_it)=\chi_\rho( h_i, t^{-1} r_i^{-1}gr_it,t^{-1}r_i^{-1}kr_it)  $$by the definition of the $3$-character  (\ref{eq:3ch-def}) for the strict $2$-categorical action $\rho$  of group $H$.
  Moreover,  the decomposition of the action of $C_G(f)$ on $\mathbb{T}r_2  \widehat{\rho}_f$ in Section \ref{decomposition} is independent of the choice of $h_i\in [h_i]_H$,    conjugacy class of $h_i$ in $H$. Therefore,
 $$ \chi_{\rm ind}(f,g,k) =\sum_{h\in H } \frac 1{|[h]_H|}\frac 1{|C_H(h)|}\sum_{\substack { s^{-1}fs=h,s\in G,\\ s^{-1}(g,k)s\in C_H(h)\times C_H(h)} } \chi_\rho(
  h, s^{-1}gs,s^{-1}ks).
   $$
Here  we have used the fact that
$
    h_i= s^{-1}fs=s^{-1}r_ih_i r_i^{-1} s
$
if and only if  $r_i^{-1} s\in C_G(h_i)$.
Note that for $s\in G$, we have $s^{-1} g s$ (resp. $ s^{-1} ks$) $\in H $ if and only if  $s^{-1} g s$ (resp. $ s^{-1} k s$) $\in C_H(h) $ since
$g$ and $k$ commute with $f= shs^{-1}$.  The $3$-character formula (\ref{eq:3-character}) follows.
 \end{proof}
\section{The categorical action of the   centralizer of $ f $ on ${\rm \mathbb{T}r}_2 \rho_f$}
\subsection{A  model: the $1$-dimensional case}
Let us prove by using  the condition (\ref{eq:3-cocycle-0}) for $3$-cocycles repeatedly  that  the expression $  \Gamma$ given in (\ref{eq:2-cocycle}) is a $2$-cocycle on the   centralizer $C_G(f)$. This proof corresponds step by step to that of the general case  carried out in Section  6.4.

{\it Proof of Proposition \ref{prop:2-cocycle}}.   By the definition of $\Gamma_{*,*}$ in (\ref{eq:2-cocycle}), we see that
$$
   \Gamma_{ h,g} \Gamma_{k,hg }= \frac {\Pi_f} {\Pi_1},
$$
where
\begin{equation}\label{eq:1-cocycle-0}\begin{split}
\Pi_f:= c( h,gf,g^*)   c( h,g,f ) c( hgf ,g^*,h^* )^{-1}
 \cdot     c(k,\underline{hf},g^*h^*  )
c(k,h g,f ) c(\underline{kf},g^*h^*,k^*)^{-1} ,
\end{split}\end{equation}
and $ \Pi_1$ is just $ \Pi_f$ with $f$ replaced by $1$. Similarly, we have
$$
   \Gamma_{k,h }  \Gamma_{kh,g}=  \frac   {\Pi_f'} {\Pi_1'},
$$where
\begin{equation}\label{eq:1-cocycle-1}\begin{split}
\Pi_f'=c(k,hf,h^*)\mathbf{c(k,h,f)}c(khf,h^*,k^*)^{-1} \cdot \mathbf{c(kh,gf,g^*)}c(kh,g,f)c(\underline{k f},g^*,(kh)^*)^{-1},
\end{split}\end{equation}and $ \Pi_1'$ is just $ \Pi_f'$ with $f$ replaced by $1$.

 Apply the $3$-cocycle condition (\ref{eq:3-cocycle-0}) to the product of the two boldface terms in (\ref{eq:1-cocycle-1})  with $g_4=k,g_3=h,g_2=gf,g_1=g^*$ to get
\begin{equation}\label{eq:1-cocycle-2}\begin{array}{ll}\Pi_f'=&
c( h,gf,g^*)c(k,h gf,g^*)c(k,h,gf ) \\ &\cdot c(k,hf,h^*)\mathbf{c(khf,h^*,k^*)}^{-1}  c(kh,g,f)\mathbf{c(\underline{k f},g^*,(kh)^*)}^{-1} .\end{array}
\end{equation}
Here the second line above is the right-hand side of (\ref{eq:1-cocycle-1}) with the two boldface terms deleted. Note that $\underline{kf} g^*=khf$.
 Apply the $3$-cocycle condition (\ref{eq:3-cocycle-0}) to the product of the two boldface  terms in (\ref{eq:1-cocycle-2}) with  $g_4= \underline{kf} ,g_3=g^*,g_2=h^*,g_1=k^*$ to get
\begin{equation}\label{eq:1-cocycle-3}\begin{array}{ll}\Pi_f'=& c(g^*,h^*,k^*)^{-1}c(\underline{kf},g^*h^*,k^*)^{-1}\mathbf{c(\underline{kf},g^*,h^* )^{-1}}
\\&\cdot
 {c( h,gf,g^*)}\mathbf{c(k,\underline{hf},g^*)}c(k,h,gf ) \cdot \mathbf{c(k,hf,h^*)}  \cdot  { c(kh,g,f)} ,
\end{array} \end{equation}Here the second line above is the right-hand side of (\ref{eq:1-cocycle-2}) with the two boldface terms deleted.
 Apply the $3$-cocycle condition (\ref{eq:3-cocycle-0}) to the product of the three boldface terms in (\ref{eq:1-cocycle-3})
with $g_4=k$, $g_3=\underline{hf}$, $g_2=g^*$,$g_1=h^*$  to get
\begin{equation}\label{eq:1-cocycle-4'}\begin{array}{ll}\Pi_f'=& c(k,\underline{hf},g^*h^*  ) c(\underline{hf},g^*,h^* )^{-1}
\\ &\cdot c(g^*,h^*,k^*)^{-1}c(\underline{kf},g^*h^*,k^*)^{-1}
c( h,gf,g^*)    \mathbf{c(k,h,gf )  }  \mathbf{ c(kh,g,f)}  ,
\end{array} \end{equation} by $k\underline{hf}=\underline{kf}$ and $\underline{hf}g^*=hf$. Here the second line above is the right-hand side of (\ref{eq:1-cocycle-3}) with the three boldface terms deleted.
Apply the $3$-cocycle condition (\ref{eq:3-cocycle-0}) to the product of the two boldface terms in (\ref{eq:1-cocycle-4'})
with $g_4=k$, $g_3=h$, $g_2=g $, $g_1=f$  to get
\begin{equation}\label{eq:1-cocycle-4}\begin{array} {ll}\Pi_f'=&c( h,g,f ) \cdot c(k,h g,f ) \mathbf{c(k,h,g ) }
\\ &  \cdot c(k,\underline{hf},g^*h^*  ) c(\underline{hf},g^*,h^* )^{-1}\mathbf{c(g^*,h^*,k^*)^{-1}}c(\underline{kf},g^*h^*,k^*)^{-1}
c( h,gf,g^*).
\end{array} \end{equation}
For $f=1$ in (\ref{eq:1-cocycle-4}), we see that $\Pi_1'$ also has the product $c(k,h,g )c(g^*,h^*,k^*)^{-1}$ of the two boldface terms, which is independent of $f$. They are cancelled in $  {\Pi_f'}/{\Pi_1'}$. So we get
$$
\frac   {\Pi_f'} {\Pi_1'}=  \frac {\Pi_f} {\Pi_1}
$$
by comparing (\ref{eq:1-cocycle-4}) and (\ref{eq:1-cocycle-0}). Proposition \ref{prop:2-cocycle} is proved.

\subsection{The natural isomorphism    $  \Gamma_{k,h g} {\#} (\psi_k \circ\Gamma_{h,g}) $ } Let us  write down the natural isomorphism     $  \Gamma_{k,h g} {\#} (\psi_k \circ\Gamma_{h,g}) $.
For a fixed $\chi\in  ({\rm \mathbb{T}r}_2 \rho_f)_0$, by using the definition of   compositions in (\ref{eq:2-composition}) twice, we see that $\psi_k \circ\psi_h
\circ\psi_g(\chi)$ is the composition of the following $2$-arrows:
 \begin{equation}\label{eq:3-composition}
      \xy 0;/r.22pc/:
 (-15,0)*+{ {x}}="1";
(15,0)*+{ {x} }="2";
(-30,0)*+{ x}="3";
(30,0)*+{ x}="4";
(-45,0)*+{ x}="5";
(45,0)*+{ x}="6";
(-60,0)*+{ x}="7";
(60,0)*+{ x}="8";
{\ar@{->}^{\rho_g} "3"; "1"};
{\ar@{-->}^{\rho_{g^*}} "2"; "4"};
{\ar@{-->}^{\rho_h} "5"; "3"};
{\ar@{->}^{\rho_{h^*}} "4"; "6"};
{\ar@{->}^{\rho_k} "7"; "5"};
{\ar@{->}^{\rho_{k^*}} "6"; "8"};
{\ar@/^1.33pc/|-{1_x} "1";"2"};
{\ar@/_1.33pc/|-{\rho_f} "1";"2"};
 {\ar@{=>}^{\chi} (0,3)*{}; (0,-3)*{}};
 {\ar@{=>}^{u_g} (0,13)*{}; (0,7)*{}};{\ar@{=>}^{u_h} (0,22)*{}; (0,16)*{}};{\ar@{=>}^{u_k} (0,32)*{}; (0,26)*{}};
 {\ar@/^3.33pc/|-{1_x} "3";"4"}; {\ar@/^5.33pc/|-{1_x} "5";"6"};{\ar@/^7.33pc/^{1_x} "7";"8"};
 {\ar@{-->}@/_2.33pc/^{\rho_{gf}} "3";"2"};
 {\ar@{-->}@/_3.33pc/^{\rho_{\underline{gg^*}}} "3";"4"};
  {\ar@{-->}@/_4.33pc/^{\rho_{\underline{hg^*}}} "5";"4"};
   {\ar@/_5.33pc/^{\rho_{\underline{hh^*}}} "5";"6"};
   {\ar@/_6.33pc/^{\rho_{\underline{kh^*}}} "7";"6"};
    {\ar@/_7.33pc/_{\rho_{\underline{kk^*}}} "7";"8"};
\endxy.
 \end{equation}
  Let us calculate the $3$-isomorphism
  \begin{equation}\label{eq:left-hand side-chi}
     [\Gamma_{k,h g} {\#}( \psi_k \circ\Gamma_{h,g})](\chi)
:\psi_k \circ\psi_h \circ\psi_g(\chi)\xy0;/r.22pc/:
  (0,0)*+{ }="1";
(6,0)*+{ }="2";
{\ar@3{->}    "1"+(0, 0);"2"+(0, 0)};
   \endxy \psi_{khg}(\chi)
  \end{equation}
  for a fixed $2$-arrow $\chi\in \mathbb{T}r_2   {\rho}_f\subset \mathcal{C}^{++}$. We consider the lower half part of (\ref{eq:3-composition}) first.
The $3$-isomorphism
  \begin{equation}\label{eq:Lambda-1}
   \Lambda_1=\diamondsuit\#_1[\rho_k\#_0 \Phi_{h,gf,g^*}\#_0 (\rho_{h^*}\rho_{k^*})]\#_1 \diamondsuit,\end{equation}
   the associator $\Phi_{h,gf,g^*}$  (\ref{eq:associator})  whiskered by $2$-isomorphisms $\diamondsuit $ which we do not write down explicitly, changes the diagonal $\rho_{\underline{gg^*}}$ of the  dotted
  quadrilateral in (\ref{eq:3-composition}) to the wavy diagonal
$\rho_{\underline{hf}}$ of the same   quadrilateral  in the following  diagram:
 \begin{equation}\label{eq:Lambda-1-Diagram}
      \xy 0;/r.22pc/:
 (-15,0)*+{ {x}}="1";
(15,0)*+{ {x} }="2";
(-30,0)*+{ x}="3";
(30,0)*+{ x}="4";
(-45,0)*+{ x}="5";
(45,0)*+{ x}="6";
(-60,0)*+{ x}="7";
(60,0)*+{ x}="8";
{\ar@{-->}^{\rho_g} "3"; "1"};
{\ar@{->}^{\rho_{g^*}} "2"; "4"};
{\ar@{-->}^{\rho_h} "5"; "3"};
{\ar@{->}^{\rho_{h^*}} "4"; "6"};
{\ar@{->}^{\rho_k} "7"; "5"};
{\ar@{->}^{\rho_{k^*}} "6"; "8"};
 {\ar@{-->}^{\rho_f} "1";"2"};
 {\ar@{-->}@/_2.33pc/^{\rho_{gf}} "3";"2"};
 {\ar@{~>}@/_3.33pc/^{\rho_{\underline{hf} }} "5";"2"};
  {\ar@/_4.33pc/^{\rho_{\underline{hg^*}}} "5";"4"};
   {\ar@/_5.33pc/^{\rho_{\underline{hh^*}}} "5";"6"};
   {\ar@/_6.33pc/^{\rho_{\underline{kh^*}}} "7";"6"};
    {\ar@/_7.33pc/_{\rho_{\underline{kk^*}}} "7";"8"};
\endxy.
 \end{equation}This is a $3$-arrow as (\ref{eq:lambda-1-3-dim}). The $2$-arrows outside the quadrilateral are fixed as the whiskering parts.
The $3$-isomorphism  \begin{equation}\label{eq:Lambda-2}
   \Lambda_2=\diamondsuit\#_1[\rho_k\#_0 \Phi_{h,g,f   }\#_0 (\rho_{g^*}\rho_{h^*}\rho_{k^*})]\#_1 \diamondsuit,     \end{equation}
as a whiskered associator (\ref{eq:associator}),  changes the diagonal $\rho_{ {gf}}$ of the above dotted-wavy quadrilateral to the wavy
diagonal $\rho_{ {hg}}$ of the same   quadrilateral   in the following  diagram:
 \begin{equation}\label{eq:Lambda-2-Diagram}
      \xy 0;/r.22pc/:
 (-15,0)*+{ {x}}="1";
(15,0)*+{ {x} }="2";
(-30,0)*+{ x}="3";
(30,0)*+{ x}="4";
(-45,0)*+{ x}="5";
(45,0)*+{ x}="6";
(-60,0)*+{ x}="7";
(60,0)*+{ x}="8";
{\ar@{->}^{\rho_g} "3"; "1"};
{\ar@{-->}^{\rho_{g^*}} "2"; "4"};
{\ar@{->}^{\rho_h} "5"; "3"};
{\ar@{-->}^{\rho_{h^*}} "4"; "6"};
{\ar@{->}^{\rho_k} "7"; "5"};
{\ar@{->}^{\rho_{k^*}} "6"; "8"};
 {\ar@{->}^{\rho_f} "1";"2"};
 {\ar@{~>}@/_1.33pc/^{\rho_{hg }} "5";"1"};
 {\ar@{-->}@/_3.33pc/^{\rho_{\underline{hf} }} "5";"2"};
  {\ar@{-->}@/_4.33pc/^{\rho_{\underline{hg^*}}} "5";"4"};
   {\ar@{-->}@/_5.33pc/^{\rho_{\underline{hh^*}}} "5";"6"};
   {\ar@/_6.33pc/^{\rho_{\underline{kh^*}}} "7";"6"};
    {\ar@/_7.33pc/_{\rho_{\underline{kk^*}}} "7";"8"};
\endxy.
 \end{equation}
The $3$-isomorphism
 \begin{equation}\label{eq:hat-Lambda-3}
   \Lambda_3=\diamondsuit\#_1[\rho_k\#_0 \Phi_{\underline{h f},g^*,h^*  }^{-1}\#_0  \rho_{k^*} ]\#_1 \diamondsuit,   \end{equation}as a whiskered associator (\ref{eq:associator}), changes the diagonal
   $\rho_{\underline{hg^*}}$ of the above dotted  quadrilateral to the wavy
diagonal $\rho_{ {g^*h^*}}$ of the same   quadrilateral   in the following  diagram:
 \begin{equation}\label{eq:Lambda-3-Diagram}
      \xy 0;/r.22pc/:
 (-15,0)*+{ {x}}="1";
(15,0)*+{ {x} }="2";
(-30,0)*+{ x}="3";
(30,0)*+{ x}="4";
(-45,0)*+{ x}="5";
(45,0)*+{ x}="6";
(-60,0)*+{ x}="7";
(60,0)*+{ x}="8";
{\ar@{->}^{\rho_g} "3"; "1"};
{\ar@{->}^{\rho_{g^*}} "2"; "4"};
{\ar@{->}^{\rho_h} "5"; "3"};
{\ar@{->}^{\rho_{h^*}} "4"; "6"};
{\ar@{-->}^{\rho_k} "7"; "5"};
{\ar@{->}^{\rho_{k^*}} "6"; "8"};
 {\ar@{->}^{\rho_f} "1";"2"};
 {\ar@/_1.33pc/_{\rho_{hg }} "5";"1"};
 {\ar@{-->}@/_3.33pc/^{\rho_{\underline{hf} }} "5";"2"};
  {\ar@{~>}@/_1.33pc/_{\rho_{ g^*h^*}} "2";"6"};
   {\ar@{-->}@/_5.33pc/^{\rho_{\underline{hh^*}}} "5";"6"};
   {\ar@{-->}@/_6.33pc/^{\rho_{\underline{kh^*}}} "7";"6"};
    {\ar@/_7.33pc/_{\rho_{\underline{kk^*}}} "7";"8"};
\endxy.
 \end{equation} Note that the diagrams (\ref{eq:Lambda-1-Diagram}), (\ref{eq:Lambda-2-Diagram}) and (\ref{eq:Lambda-3-Diagram}) are exactly the diagrams (\ref{eq:psi-g-h-2}),
 (\ref{eq:psi-g-h-3}) and (\ref{eq:psi-g-h-4}) by adding from below to each of these the arrows:
 $$
      \xy 0;/r.22pc/:
 (-45,0)*+{ x}="5";
(45,0)*+{ x}="6";
(-60,0)*+{ x}="7";
(60,0)*+{ x}="8";{\ar@{->}@/_5.33pc/^{\rho_{\underline{hh^*}}} "5";"6"};
{\ar@{->}^{\rho_k} "7"; "5"};
{\ar@{->}^{\rho_{k^*}} "6"; "8"};
    {\ar@{->}@/_6.33pc/^{\rho_{\underline{kh^*}}} "7";"6"};
    {\ar@/_7.33pc/_{\rho_{\underline{kk^*}}} "7";"8"};
\endxy,
 $$
 By definition,
 the composition $\Lambda_1\#_2\Lambda_2\#_2\Lambda_3$ is the $3$-isomorphism
 $$
    [ \psi_k  \circ \Gamma_{h,g}](\chi):\psi_k \circ \psi_h \circ\psi_g(\chi)\xy0;/r.22pc/:
  (0,0)*+{ }="1";
(10,0)*+{ }="2";
{\ar@3{->}^{ }  "1" ;"2" };
   \endxy \psi_k
\circ\psi_{ hg}(\chi)
 $$
 corresponding to the lower half   of (\ref{eq:3-composition}).

The $3$-isomorphism
 $$
   \Lambda_4=\diamondsuit\#_1[  \Phi_{k,\underline{hf}, g^* h^* } \#_0  \rho_{k^*} ]\#_1 \diamondsuit,
     $$as a whiskered associator (\ref{eq:associator}), changes the diagonal $\rho_{\underline{hh^*}}$ of the   dotted-wavy  quadrilateral in (\ref{eq:Lambda-3-Diagram}) to the wavy
diagonal  $\rho_{\underline{kf}}$ of the same   quadrilateral   in the following  diagram:
\begin{equation}\label{eq:Lambda-4}
      \xy 0;/r.22pc/:
 (-15,0)*+{ {x}}="1";
(15,0)*+{ {x} }="2";
(-30,0)*+{ x}="3";
(30,0)*+{ x}="4";
(-45,0)*+{ x}="5";
(45,0)*+{ x}="6";
(-60,0)*+{ x}="7";
(60,0)*+{ x}="8";
{\ar@{->}^{\rho_g} "3"; "1"};
{\ar@{->}^{\rho_{g^*}} "2"; "4"};
{\ar@{->}^{\rho_h} "5"; "3"};
{\ar@{->}^{\rho_{h^*}} "4"; "6"};
{\ar@{-->}^{\rho_k} "7"; "5"};
{\ar@{->}^{\rho_{k^*}} "6"; "8"};
 {\ar@{-->}^{\rho_f} "1";"2"};
 {\ar@{-->}@/_1.33pc/_{\rho_{hg }} "5";"1"};
 {\ar@{-->}@/_3.33pc/^{\rho_{\underline{hf} }} "5";"2"};
  {\ar@{->}@/_1.33pc/_{\rho_{ g^*h^*}} "2";"6"};
   {\ar@{~>}@/_4.33pc/_{\rho_{\underline{kf} }} "7";"2"};
   {\ar@{->}@/_6.33pc/^{\rho_{\underline{kh^*}}} "7";"6"};
    {\ar@{->}@/_7.33pc/_{\rho_{\underline{kk^*}}} "7";"8"};
\endxy.
 \end{equation}
The $3$-isomorphism
 $$
   \Lambda_5=\diamondsuit\#_1   \left[\Phi_{k , hg,f } \#_0 \left(\rho_{g^*} \rho_{h^*}  \rho_{k^*}\right)\right ]\#_1 \diamondsuit,   $$ as a whiskered associator (\ref{eq:associator}),  changes the diagonal $\rho_{\underline{hf}}$ of the above dotted-wavy  quadrilateral to
   the wavy
diagonal $\rho_{\underline{kg}}$  of the same   quadrilateral   in the following  diagram:
 \begin{equation}\label{eq:Lambda-5}
      \xy 0;/r.22pc/:
 (-15,0)*+{ {x}}="1";
(15,0)*+{ {x} }="2";
(-30,0)*+{ x}="3";
(30,0)*+{ x}="4";
(-45,0)*+{ x}="5";
(45,0)*+{ x}="6";
(-60,0)*+{ x}="7";
(60,0)*+{ x}="8";
{\ar@{->}^{\rho_g} "3"; "1"};
{\ar@{->}^{\rho_{g^*}} "2"; "4"};
{\ar@{->}^{\rho_h} "5"; "3"};
{\ar@{->}^{\rho_{h^*}} "4"; "6"};
{\ar@{->}^{\rho_k} "7"; "5"};
{\ar@{-->}^{\rho_{k^*}} "6"; "8"};
 {\ar@{->}^{\rho_f} "1";"2"};
 {\ar@/_1.33pc/^{\rho_{hg }} "5";"1"};
 {\ar@{~>}@/_2.33pc/^{\rho_{\underline{kg} }} "7";"1"};
  {\ar@{-->}@/_1.33pc/_{\rho_{ g^*h^*}} "2";"6"};
   {\ar@{-->}@/_4.33pc/^{\rho_{\underline{kf} }} "7";"2"};
   {\ar@{-->}@/_6.33pc/^{\rho_{\underline{kh^*}}} "7";"6"};
    {\ar@{-->}@/_7.33pc/_{\rho_{\underline{kk^*}}} "7";"8"};
\endxy.
 \end{equation}
The $3$-isomorphism
 $$
   \Lambda_6=\diamondsuit\#_1   \Phi_{\underline{k  f} , g^*h^*,k^* }^{-1},   $$as a whiskered associator (\ref{eq:associator}), changes the diagonal $\rho_{\underline{kh^*}}$ of the above
   dotted  quadrilateral to the wavy
diagonal $\rho_{\underline{g^*k^*}}$  of the same   quadrilateral   in the following  diagram:
  \begin{equation}\label{eq:Lambda-6}
      \xy 0;/r.22pc/:
 (-15,0)*+{ {x}}="1";
(15,0)*+{ {x} }="2";
(-30,0)*+{ x}="3";
(30,0)*+{ x}="4";
(-45,0)*+{ x}="5";
(45,0)*+{ x}="6";
(-60,0)*+{ x}="7";
(60,0)*+{ x}="8";
{\ar@{->}^{\rho_g} "3"; "1"};
{\ar@{->}^{\rho_{g^*}} "2"; "4"};
{\ar@{->}^{\rho_h} "5"; "3"};
{\ar@{->}^{\rho_{h^*}} "4"; "6"};
{\ar@{->}^{\rho_k} "7"; "5"};
{\ar@{->}^{\rho_{k^*}} "6"; "8"};
 {\ar@{->}^{\rho_f} "1";"2"};
 {\ar@{->}@/_1.33pc/^{\rho_{hg }} "5";"1"};
 {\ar@{->}@/_2.33pc/^{\rho_{\underline{kg}  }} "7";"1"};
  {\ar@{->}@/_1.33pc/^{\rho_{ g^*h^*}} "2";"6"};
   {\ar@/_4.33pc/^{\rho_{\underline{kf} }} "7";"2"};
   {\ar@{~>}@/_2.33pc/_{\rho_{ \underline{g^*k^*}}} "2";"8"};
    {\ar@/_7.33pc/^{\rho_{\underline{kk^*}}} "7";"8"};
\endxy.
 \end{equation}
The composition $\Lambda_4\#_2\Lambda_5\#_2\Lambda_6$ is the $3$-isomorphism
$$
   \Gamma_{k,h g}(\chi):\psi_k \circ\psi_{h  g}(\chi)\xy0;/r.22pc/:
  (0,0)*+{ }="1";
(6,0)*+{ }="2";
{\ar@3{->}    "1"+(0, 0);"2"+(0, 0)};
   \endxy  \psi_{ khg}(\chi)
$$
 corresponding to the lower half  of (\ref{eq:3-composition}).

In   the $2$-category $ \mathcal{C}^{ +}$,
the composition $   \Lambda_1\#_2\cdots \#_2\Lambda_6$ of  $3$-isomorphisms
  corresponds to the following diagram   $\mathscr D_f^l:=$
  \begin{equation}\label{eq:diagram-1}
      \xy 0;/r.22pc/:
 (0,0)*+{ \rho_f }="1";
(30,0)*+{ \rho_{g f} }="2";
(60,0)*+{ \rho_{\underline{gg^*}}}="3";
(90,0)*+{ \rho_{\underline{hg^*}}}="4";
(120,0)*+{ \rho_{\underline{hh^*}}}="5";
(150,0)*+{ \rho_{\underline{kh^*}}}="6";
(180,0)*+{\rho_{\underline{kk^*}}}="7";
 (50,-25)*+{\rho_{\underline{hf}}   }="8";
  (20,-35)*+{\rho_{hg  }   }="9";
  (90,-20)*+{\rho_{\underline{hf}}\rho_{g^*  h^* }   }="10";          (50,-40)*+{\rho_{\underline{kg}   }    }="11";
         (120,-20)*+{\rho_{\underline{kf}  }\rho_{g^*  h^* }   }="12";
          (90,-35)*+{\rho_{\underline{kf}   }  }="13";
          (150,-20)*+{\rho_{\underline{kf}   }\rho_{g^*  h^*k^* }   }="14";
{\ar@{->}^{\phi_{g,f}} "1"; "2"};
{\ar@{->}^{\phi_{g f,g^*}} "2"; "3"};
{\ar@{->}^{\phi_{h, \underline{gg^*}}} "3"; "4"};
{\ar@{->}^{\phi_{\underline{hg^*}, h^*}} "4"; "5"};
{\ar@{->}^{\phi_{k,\underline{hh^*}}} "5"; "6"};
{\ar@{->}^{\phi_{\underline{kh^*},k^*}} "6"; "7"};
{\ar@{->}_{ \scriptscriptstyle\phi_{h,gf }} "2"; "8"};
{\ar@{->}|-{ \phi_{\underline{hf},g^* }} "8"; "4"};
{\ar@{->}_{ \phi_{h,g  }} "1"; "9"};
{\ar@{->}^{\scriptscriptstyle\phi_{h g,f  }} "9"; "8"};
{\ar@{->}^{\scriptscriptstyle \phi_{g^* ,  h^*}} "8"; "10"};
{\ar@{->}|-{ \phi_{\underline{hf}, g^*  h^*  }} "10"; "5"};
{\ar@{=}^{}(75,-28)*^{} ;(95,-28)*^{}   }; {\ar@{->}_{ \scriptscriptstyle\phi_{k,  \underline{hf}}} "8"; "13"};
{\ar@{->}|-{ \phi_{\underline{kf}, g^*  h^*  }} "12"; "6"};
{\ar@{->}^{\scriptscriptstyle \phi_{k,  \underline{hf}}} "10"; "12"};
{\ar@{->}_{ \phi_{  g^* , h^*  }} "13"; "12"};
{\ar@{->}_{ \phi_{g^*  h^*,k^*}} "12"; "14"};
{\ar@{->}_{ \phi_{\underline{kf}, \underline{g^*   k^*} }} "14"; "7"};
{\ar@{->}_{ \phi_{\underline{kg}, f  }} "11"; "13"};{\ar@{->}_{ \phi_{k,hg }} "9"; "11"};
{\ar@{=>}_{   \Lambda_1} (55,-3)*{}; (55,-20)*{}};
{\ar@{=>}_{   \Lambda_2} (25,-5)*{}; (25,-25)*{}};
{\ar@{=>}_{   \Lambda_3} (90,-5)*{}; (90,-15)*{}};
{\ar@{=>}_{   \Lambda_4} (120,-5)*{}; (120,-15)*{}};
{\ar@{=>}_{   \Lambda_5} (50,-27)*{}; (50,-37)*{}};
{\ar@{=>}^{   \Lambda_6} (150,-5)*{}; (150,-15)*{}};
  \endxy
 \end{equation}
where the symbol  $=$ in this diagram   follows from the interchange  law
(\ref{eq:interchanging-law}) for a horizontal composition: the commutativity of $ \phi_{k,  \underline{hf}}$ and $\phi_{g^*,  h^*}$. Note that the part involving $\Lambda_1,\Lambda_2,\Lambda_3$ is just the diagram (\ref{eq:lower}).
Let $\mathscr D_1^l $  be the corresponding  diagram in $\mathcal{C}^+$ with   $f$ replaced by $1$, by using adjoint operations as in
 (\ref{eq:upper}). Then as in (\ref{eq:D-1-f}), the $2$-isomorphism in $ \mathcal{C}^{  +}$ corresponding to the morphism
$  [\Gamma_{k,h g} {\#} (\psi_k \circ\Gamma_{h,g}) ](\chi)$ in $\mathbb{ {T}}r_2  \rho_f $
  is
 \begin{equation}\label{eq:D-l}
   \mathscr{ D}_1^l \xy0;/r.22pc/:
  (0,0)*+{ }="1";
(7,0)*+{ }="2";
{\ar@{->}^{\chi }   "1"+(0,0);"2"+(0,0)};
   \endxy  \mathscr{ D}_f^l.
 \end{equation}

\subsection{The natural isomorphism   $\Gamma_{kh ,g} {\#} (\Gamma_{k,h}\circ \psi_g) $ }
To calculate $  \Gamma_{k,h }\circ\psi_g$, we fix the part
$$
      \xy 0;/r.22pc/:
 (-15,0)*+{ {x}}="1";
(15,0)*+{ {x} }="2";
(-30,0)*+{ x}="3";
(30,0)*+{ x}="4";
{\ar@{->}^{\rho_g} "3"; "1"};
{\ar@{->}^{\rho_{g^*}} "2"; "4"};
 {\ar@{->}^{\rho_f} "1";"2"};
 {\ar@/_2.33pc/^{\rho_{gf}} "3";"2"};
 {\ar@{->}@/_3.33pc/_{\rho_{\underline{gg^*}}} "3";"4"};
 \endxy
 $$in the lower half   of (\ref{eq:3-composition}), which corresponds to $\psi_g$.
The $3$-isomorphism
 \begin{equation}\label{eq:tPhi-1}
   \widetilde{\Lambda}_1=\diamondsuit\#_1 \Phi_{k,\underline{hg^*},h^*} \#_1\diamondsuit,\end{equation}as a whiskered associator (\ref{eq:associator}),
   changes the $1$-isomorphism $\rho_{\underline{hh^*}}$ in the lower part of (\ref{eq:3-composition}) to the wavy
diagonal $\rho_{\underline{kg^*}}$ of the same   quadrilateral   in the following  diagram:
 $$
      \xy 0;/r.22pc/:
 (-15,0)*+{ {x}}="1";
(15,0)*+{ {x} }="2";
(-30,0)*+{ x}="3";
(30,0)*+{ x}="4";
(-45,0)*+{ x}="5";
(45,0)*+{ x}="6";
(-60,0)*+{ x}="7";
(60,0)*+{ x}="8";
{\ar@{->}^{\rho_g} "3"; "1"};
{\ar@{->}^{\rho_{g^*}} "2"; "4"};
{\ar@{-->}^{\rho_h} "5"; "3"};
{\ar@{->}^{\rho_{h^*}} "4"; "6"};
{\ar@{-->}^{\rho_k} "7"; "5"};
{\ar@{->}^{\rho_{k^*}} "6"; "8"};
 {\ar@{->}^{\rho_f} "1";"2"};
 {\ar@/_2.33pc/^{\rho_{gf}} "3";"2"};
 {\ar@{-->}@/_3.33pc/^{\rho_{\underline{gg^*}}} "3";"4"};
  {\ar@{-->}@/_4.33pc/^{\rho_{\underline{hg^*}}} "5";"4"};
   {\ar@{~>}@/_5.33pc/^{\rho_{\underline{kg^*} }} "7";"4"};
   {\ar@/_6.33pc/^{\rho_{\underline{kh^*}}} "7";"6"};
    {\ar@/_7.33pc/_{\rho_{\underline{kk^*}}} "7";"8"};
\endxy.
 $$
The $3$-isomorphism
 \begin{equation}\label{eq:tPhi-2}
   \widetilde{\Lambda}_2= \diamondsuit\#_1[  \Phi_{k,h, \underline{g  g^* }}   \#_0 ( \rho_{h^*} \rho_{k^*}) ]\#_1 \diamondsuit,  \end{equation}as a whiskered associator (\ref{eq:associator}),
   changes the diagonal $\rho_{\underline{hg^*}}$ of the above dotted-wavy  quadrilateral to the wavy
diagonal $\rho_{ {kh}}$ of the same   quadrilateral   in the following  diagram:
 $$
      \xy 0;/r.22pc/:
 (-15,0)*+{ {x}}="1";
(15,0)*+{ {x} }="2";
(-30,0)*+{ x}="3";
(30,0)*+{ x}="4";
(-45,0)*+{ x}="5";
(45,0)*+{ x}="6";
(-60,0)*+{ x}="7";
(60,0)*+{ x}="8";
{\ar@{->}^{\rho_g} "3"; "1"};
{\ar@{->}^{\rho_{g^*}} "2"; "4"};
{\ar@{->}^{\rho_h} "5"; "3"};
{\ar@{-->}^{\rho_{h^*}} "4"; "6"};
{\ar@{->}^{\rho_k} "7"; "5"};
{\ar@{-->}^{\rho_{k^*}} "6"; "8"};
 {\ar@{->}^{\rho_f} "1";"2"};
 {\ar@/_2.33pc/^{\rho_{gf}} "3";"2"};
 {\ar@{->}@/_3.33pc/^{\rho_{\underline{gg^*}}} "3";"4"};
  {\ar@{~>}@/_1.33pc/_{\rho_{kh }} "7";"3"};
   {\ar@{-->}@/_5.33pc/^{\rho_{\underline{kg^*} }} "7";"4"};
   {\ar@{-->}@/_6.33pc/^{\rho_{\underline{kh^*}}} "7";"6"};
    {\ar@{-->}@/_7.33pc/_{\rho_{\underline{kk^*}}} "7";"8"};
\endxy.
 $$
   The $3$-isomorphism
 \begin{equation}\label{eq:tPhi-3}
   \widetilde{\Lambda}_3=\diamondsuit\#_1   \Phi_{\underline{k  g^*},h^*,k^* }^{-1},   \end{equation}as a whiskered associator (\ref{eq:associator}),
   changes the diagonal $\rho_{\underline{kh^*}}$ of the above dotted  quadrilateral to the wavy
diagonal $\rho_{ {h^*k^*}}$ of the same   quadrilateral   in the following  diagram:
  \begin{equation}\label{eq:tdiagram-3}
      \xy 0;/r.22pc/:
 (-15,0)*+{ {x}}="1";
(15,0)*+{ {x} }="2";
(-30,0)*+{ x}="3";
(30,0)*+{ x}="4";
(-45,0)*+{ x}="5";
(45,0)*+{ x}="6";
(-60,0)*+{ x}="7";
(60,0)*+{ x}="8";
{\ar@{->}^{\rho_g} "3"; "1"};
{\ar@{-->}^{\rho_{g^*}} "2"; "4"};
{\ar@{->}^{\rho_h} "5"; "3"};
{\ar@{->}^{\rho_{h^*}} "4"; "6"};
{\ar@{->}^{\rho_k} "7"; "5"};
{\ar@{->}^{\rho_{k^*}} "6"; "8"};
 {\ar@{->}^{\rho_f} "1";"2"};
 {\ar@{-->}@/_2.33pc/^{\rho_{gf}} "3";"2"};
 {\ar@{-->}@/_3.33pc/_{\scriptscriptstyle\rho_{\underline{gg^*}}} "3";"4"};
  {\ar@{-->}@/_1.33pc/_{\scriptscriptstyle\rho_{kh }} "7";"3"};
   {\ar@{-->}@/_5.33pc/^{\rho_{\underline{kg^*} }} "7";"4"};
   {\ar@{~>}@/_1.33pc/_{\rho_{h^*k^*}} "4";"8"};
    {\ar@/_7.33pc/_{\rho_{\underline{kk^*}}} "7";"8"};
\endxy.
 \end{equation}
 The composition $\widetilde{\Lambda}_1\#_2\widetilde{\Lambda}_2\#_2\widetilde{\Lambda}_3$ is the $3$-isomorphism
  $$
   \Gamma_{k,h }\circ\psi_g:\psi_k \circ\psi_h\circ\psi_g(\chi)\xy0;/r.22pc/:
  (0,0)*+{ }="1";
(6,0)*+{ }="2";
{\ar@3{->}    "1"+(0, 0);"2"+(0, 0)};
   \endxy \psi_{k h}
\circ\psi_{g}(\chi),$$  corresponding to the lower half   of (\ref{eq:3-composition}).

The $3$-isomorphism
 \begin{equation}\label{eq:tPhi-4}
   \widetilde{\Lambda}_4=   \diamondsuit\#_1[  \Phi_{k h, gf, g^*   }   \#_0 ( \rho_{h^*} \rho_{k^*}) ]\#_1\diamondsuit,  \end{equation}as a whiskered associator (\ref{eq:associator}),
   changes the diagonal $\rho_{\underline{gg^*}}$ of the   dotted  quadrilateral in (\ref{eq:tdiagram-3}) to the wavy
diagonal $\rho_{\underline{kf}}$ of the same   quadrilateral   in the following  diagram:
 $$
      \xy 0;/r.22pc/:
 (-15,0)*+{ {x}}="1";
(15,0)*+{ {x} }="2";
(-30,0)*+{ x}="3";
(30,0)*+{ x}="4";
(-45,0)*+{ x}="5";
(45,0)*+{ x}="6";
(-60,0)*+{ x}="7";
(60,0)*+{ x}="8";
{\ar@{-->}^{\rho_g} "3"; "1"};
{\ar@{->}^{\rho_{g^*}} "2"; "4"};
{\ar@{->}^{\rho_h} "5"; "3"};
{\ar@{->}^{\rho_{h^*}} "4"; "6"};
{\ar@{->}^{\rho_k} "7"; "5"};
{\ar@{->}^{\rho_{k^*}} "6"; "8"};
 {\ar@{-->}^{\rho_f} "1";"2"};
 {\ar@{-->}@/_1.33pc/^{\rho_{gf}} "3";"2"};
 {\ar@{~>}@/_3.33pc/^{\rho_{\underline{kf}}} "7";"2"};
  {\ar@{-->}@/_1.33pc/^{\rho_{kh }} "7";"3"};
   {\ar@/_5.33pc/^{\rho_{\underline{kg^*} }} "7";"4"};
   {\ar@/_1.33pc/_{\rho_{h^*k^*}} "4";"8"};
    {\ar@/_7.33pc/_{\rho_{\underline{kk^*}}} "7";"8"};
\endxy.
 $$
 The $3$-isomorphism
 \begin{equation}\label{eq:tPhi-5}
   \widetilde{\Lambda}_5=  \diamondsuit\#_1[  \Phi_{   {kh}, g  ,f }    \#_0 ( \rho_{g^*}  \rho_{h^*}\rho_{k^*}) ]\#_1 \diamondsuit , \end{equation}
 as a whiskered associator (\ref{eq:associator}),  changes the diagonal $\rho_{ {gf}}$ of the above dotted  quadrilateral to the wavy
diagonal $\rho_{\underline{kg}}$ of the same   quadrilateral   in the following  diagram:
 $$
      \xy 0;/r.22pc/:
 (-15,0)*+{ {x}}="1";
(15,0)*+{ {x} }="2";
(-30,0)*+{ x}="3";
(30,0)*+{ x}="4";
(-45,0)*+{ x}="5";
(45,0)*+{ x}="6";
(-60,0)*+{ x}="7";
(60,0)*+{ x}="8";
{\ar@{->}^{\rho_g} "3"; "1"};
{\ar@{-->}^{\rho_{g^*}} "2"; "4"};
{\ar@{->}^{\rho_h} "5"; "3"};
{\ar@{->}^{\rho_{h^*}} "4"; "6"};
{\ar@{->}^{\rho_k} "7"; "5"};
{\ar@{->}^{\rho_{k^*}} "6"; "8"};
 {\ar@{->}^{\rho_f} "1";"2"};
 {\ar@{~>}@/_2.03pc/|-{\scriptscriptstyle\rho_{\underline{kg}}} "7";"1"};
 {\ar@{-->}@/_3.33pc/^{\rho_{\underline{kf}}} "7";"2"};
  {\ar@/_1.33pc/^{\rho_{kh }} "7";"3"};
   {\ar@{-->}@/_5.33pc/^{\rho_{\underline{kg^*} }} "7";"4"};
   {\ar@{-->}@/_1.33pc/_{\rho_{h^*k^*}} "4";"8"};
    {\ar@{-->}@/_7.33pc/^{\rho_{\underline{kk^*}}} "7";"8"};
\endxy.
 $$
At last, the $3$-isomorphism
 \begin{equation}\label{eq:tPhi-6}
   \widetilde{\Lambda}_6=  \diamondsuit\#_1  \Phi_{ \underline{kf}, g^*,h^* k^* }^{-1} ,   \end{equation}as a whiskered associator (\ref{eq:associator}),
   changes the diagonal $\rho_{\underline{kg^*}}$ of the above dotted  quadrilateral to the wavy
diagonal $\rho_{\underline{g^*k^*}}$ of the same   quadrilateral   in the following  diagram:
  $$
      \xy 0;/r.22pc/:
 (-15,0)*+{ {x}}="1";
(15,0)*+{ {x} }="2";
(-30,0)*+{ x}="3";
(30,0)*+{ x}="4";
(-45,0)*+{ x}="5";
(45,0)*+{ x}="6";
(-60,0)*+{ x}="7";
(60,0)*+{ x}="8";
{\ar@{->}^{\rho_g} "3"; "1"};
{\ar@{->}^{\rho_{g^*}} "2"; "4"};
{\ar@{->}^{\rho_h} "5"; "3"};
{\ar@{->}^{\rho_{h^*}} "4"; "6"};
{\ar@{->}^{\rho_k} "7"; "5"};
{\ar@{->}^{\rho_{k^*}} "6"; "8"};
 {\ar@{->}^{\rho_f} "1";"2"};
 {\ar@/_2.03pc/|-{\scriptscriptstyle\rho_{\underline{kg} }} "7";"1"};
 {\ar@/_3.33pc/_{\rho_{\underline{kf}}} "7";"2"};
  {\ar@/_1.33pc/^{\rho_{kh }} "7";"3"};
   {\ar@{~>}@/_2.33pc/_{\rho_{ \underline{g^*k^*} }} "2";"8"};
   {\ar@/_1.33pc/^{\rho_{h^*k^*}} "4";"8"};
    {\ar@/_7.33pc/_{\rho_{\underline{kk^*}}} "7";"8"};
\endxy.
 $$
  The composition $\widetilde{\Lambda}_4\#_2\widetilde{\Lambda}_5\#_2\widetilde{\Lambda}_6$ is the $3$-isomorphism $  \Gamma_{kh ,g}(\chi):\psi_{k h}
\circ\psi_{g}(\chi)\xy0;/r.22pc/:
  (0,0)*+{ }="1";
(6,0)*+{ }="2";
{\ar@3{->}    "1"+(0, 0);"2"+(0, 0)};
   \endxy \psi_{k  h  g}(\chi)$ corresponding to the lower half    of (\ref{eq:3-composition}).

The composition of  $   \widetilde{\Lambda}_1\#_2\cdots\#_2\widetilde{\Lambda}_6$  in   the $2$-category $ \mathcal{C}^{  +}$ is the following diagram  $\mathscr{ D}_f^r:=$
  \begin{equation}\label{eq:diagram-2}
      \xy 0;/r.22pc/:
 (0,0)*+{ \rho_f }="1";
(30,0)*+{ \rho_{g f} }="2";
(60,0)*+{ \rho_{\underline{gg^*}}}="3";
(90,0)*+{ \rho_{\underline{hg^*}}}="4";
(120,0)*+{ \rho_{\underline{hh^*}}}="5";
(150,0)*+{ \rho_{\underline{kh^*}}}="6";
(180,0)*+{\rho_{\underline{kk^*}}}="7";
 (110,-25)*+{\rho_{\underline{kg^*}} }="8"; (80,-25)*+{\rho_{kh}  \rho_{\underline{gg^*}} }="9";
  (140,-25)*+{\rho_{\underline{kg^*}}\rho_{ h^*k^*}}="10";
  (40,-25)*+{  \rho_{kh}  \rho_{g f }  }="11";
  (60,-40)*+{  \rho_{\underline{kf} }  }="12";
  (10,-25)*+{  \rho_{kh }  }="13";  (20,-40)*+{  \rho_{\underline{kg} }  }="14";
  (130,-40)*+{  \rho_{\underline{kf} }\rho_{h^*k^*}  }="15"; (160,-40)*+{  \rho_{\underline{kf} }\rho_{\underline{g^*k^*}}  }="16";
   {\ar@{->}^{\phi_{g,f}} "1"; "2"};
{\ar@{-->}^{\phi_{g f,g^*}} "2"; "3"};
{\ar@{-->}^{\phi_{h, \underline{gg^*}}} "3"; "4"};
{\ar@{->}^{\phi_{\underline{hg^*}, h^*}} "4"; "5"};
{\ar@{->}^{\phi_{k,\underline{hh^*}}} "5"; "6"};
{\ar@{->}^{\phi_{\underline{kh^*},k^*}} "6"; "7"};
 {\ar@{=}^{}(50,-15)*^{} ;(60,-15)*^{}   }; {\ar@{=}^{}(20,-15)*^{} ;(30,-15)*^{}   };{\ar@{=}^{}(100,-35)*^{} ;(120,-35)*^{}   };
  {\ar@{-->}|-{\phi_{k, \underline{hg^*}    }} "4"; "8"};
{\ar@{->}|-{\phi_{\underline{kg^*},  h^*}} "8"; "6"};
 {\ar@{-->}_{\scriptscriptstyle\phi_{k, h    }} "3"; "9"};
{\ar@{-->}^{\phi_{kh, \underline{gg^*} }} "9"; "8"};
{\ar@{->}^{\phi_{h^*,k^*   }} "8"; "10"};
{\ar@{->}|-{\phi_{\underline{kg^*},h^*k^*}} "10"; "7"};
{\ar@{-->}^{\scriptscriptstyle\phi_{k ,h    }} "2"; "11"};
{\ar@{-->}^{\phi_{ g f, g^* }} "11"; "9"};
{\ar@{-->}_{\scriptscriptstyle\phi_{ kh, g f   }} "11"; "12"};
{\ar@{-->}_{\scriptscriptstyle\phi_{ \underline{kf} ,g^*}} "12"; "8"};
{\ar@{->}_{\phi_{k,h    }} "1"; "13"};
{\ar@{->}^{\scriptscriptstyle\phi_{   g, f }} "13"; "11"};
{\ar@{->}_{\phi_{kh ,g   }} "13"; "14"};
{\ar@{->}_{\phi_{   \underline{kg}, f }} "14"; "12"};
{\ar@{->}_{\phi_{h^*,k^*   }} "12"; "15"};
{\ar@{->}|-{\phi_{   \underline{kf}, g^* }} "15"; "10"};
{\ar@{->}_{\phi_{g^*,h^*k^*   }} "15"; "16"};
{\ar@{->}_{\phi_{   \underline{kf}, \underline{g^*k^*} }} "16"; "7"};
{\ar@{=>}^{  \widetilde{\Lambda}_1} (115,-5)*{}; (115,-20)*{}};
{\ar@{=>}^{  \widetilde{\Lambda}_2} (80,-5)*{}; (80,-20)*{}};
{\ar@{=>}^{  \widetilde{\Lambda}_3} (150,-5)*{}; (143,-20)*{}};
{\ar@{=>}^{  \widetilde{\Lambda}_4} (60,-27)*{}; (60,-39)*{}};
 {\ar@{=>}^{  \widetilde{\Lambda}_5} (30,-27)*{}; (30,-39)*{}};
{\ar@{=>}^{  \widetilde{\Lambda}_6} (148,-25)*{}; (157,-38)*{}};
\endxy.
 \end{equation}
Let $\mathscr D_1^r $  be the corresponding  diagram in $\mathcal{C}^+$ with   $f$ replaced by $1$, by using adjoint operations as in
 (\ref{eq:upper}). Then the $2$-isomorphism in $ \mathcal{C}^{  +}$ corresponding to the morphism
$  [\Gamma_{kh ,g} {\#} (\Gamma_{k,h}\circ \psi_g)](\chi)$ in $\mathbb{ {T}}r_2  \rho_f $
  is
 \begin{equation}\label{eq:D-r}
   \mathscr{ D}_1^r \xy0;/r.22pc/:
  (0,0)*+{ }="1";
(7,0)*+{ }="2";
{\ar@{->}^{\chi }   "1"+(0,0);"2"+(0,0)};
   \endxy  \mathscr{ D}_f^r.
 \end{equation}
\subsection{The proof of the associativity} Let us show the identity (\ref{eq:cat-tr-0}), i.e.,  that diagrams $\mathscr D_1^l\xrightarrow{\chi} \mathscr D_f^l$ in (\ref{eq:D-l}) and $\mathscr D_1^r\xrightarrow{\chi} \mathscr D_f^r$ in (\ref{eq:D-r})
are   identical in  the   $2$-category $ \mathcal{C}^{ +}$,  by using the $3$-cocycle identity (\ref{eq:Omega}) repeatedly. This proof corresponds to that of the  $1$-dimensional case     in Section  6.1 step by step.

Apply the $3$-cocycle identity (\ref{eq:Omega}) to the dotted diagram in (\ref{eq:diagram-2}) with $g_4=k,g_3=h,g_2=gf,g_1=g^*$ to get wavy isomorphisms in the
following diagram
   \begin{equation}\label{eq:diagram-3}
      \xy 0;/r.22pc/:
 (0,0)*+{ \rho_f }="1";
(30,0)*+{ \rho_{g f} }="2";
(60,0)*+{ \rho_{\underline{gg^*}}}="3";
(90,0)*+{ \rho_{\underline{hg^*}}}="4";
(120,0)*+{ \rho_{\underline{hh^*}}}="5";
(150,0)*+{ \rho_{\underline{kh^*}}}="6";
(180,0)*+{\rho_{\underline{kk^*}}}="7";
 (110,-25)*+{\rho_{\underline{kg^*}} }="8"; (80,-25)*+{\rho_{ \underline{hf} } }="9";
  (140,-25)*+{\rho_{\underline{kg^*}}\rho_{ h^*k^*}}="10";
  (40,-25)*+{  \rho_{kh}  \rho_{g f }  }="11";
  (60,-40)*+{  \rho_{\underline{kf} }  }="12";
  (10,-25)*+{  \rho_{kh }  }="13";  (20,-40)*+{  \rho_{\underline{kg} }  }="14";
  (130,-40)*+{  \rho_{\underline{kf} }\rho_{h^*k^*}  }="15"; (160,-40)*+{  \rho_{\underline{kf} }\rho_{\underline{g^*k^*}}  }="16";
   {\ar@{->}^{\scriptscriptstyle\phi_{g,f}} "1"; "2"};
{\ar@{->}^{\scriptscriptstyle\phi_{g f,g^*}} "2"; "3"};
{\ar@{->}^{\scriptscriptstyle\phi_{h, \underline{gg^*}}} "3"; "4"};
{\ar@{->}^{\scriptscriptstyle\phi_{\underline{hg^*}, h^*}} "4"; "5"};
{\ar@{->}^{\scriptscriptstyle\phi_{k,\underline{hh^*}}} "5"; "6"};
{\ar@{-->}^{\scriptscriptstyle\phi_{\underline{kh^*},k^*}} "6"; "7"};
   {\ar@{=}^{}(20,-15)*^{} ;(30,-15)*^{}   };{\ar@{=}^{}(100,-35)*^{} ;(120,-35)*^{}   };
  {\ar@{->}^{\scriptscriptstyle\phi_{k, \underline{hg^*}    }} "4"; "8"};
{\ar@{-->}^{\scriptscriptstyle\phi_{\underline{kg^*},  h^*}} "8"; "6"};
 {\ar@{<~}_{\scriptscriptstyle\phi_{  h ,gf   }} "9"; "2"};
{\ar@{~>}^{\scriptscriptstyle\phi_{ \underline{hf}, g^* }} "9"; "4"};
{\ar@{-->}^{\scriptscriptstyle\phi_{h^*,k^*   }} "8"; "10"};
{\ar@{-->}|-{\scriptscriptstyle\phi_{\underline{kg^*},h^*k^*}} "10"; "7"};
{\ar@{->}^{\scriptscriptstyle\phi_{k ,h    }} "2"; "11"};
{\ar@{~>}_{\scriptscriptstyle\phi_{k, \underline{hf} }} "9"; "12"};
{\ar@{->}_{\scriptscriptstyle\phi_{kh, g f   }} "11"; "12"};
{\ar@{-->}_{\scriptscriptstyle\phi_{ \underline{kf} ,g^*}} "12"; "8"};
{\ar@{->}_{\scriptscriptstyle\phi_{k,h    }} "1"; "13"};
{\ar@{->}_{\scriptscriptstyle\phi_{   g, f }} "13"; "11"};
{\ar@{->}_{\scriptscriptstyle\phi_{kh ,g   }} "13"; "14"};
{\ar@{->}_{\scriptscriptstyle\phi_{   \underline{kg}, f }} "14"; "12"};
{\ar@{-->}_{\scriptscriptstyle\phi_{h^*,k^*   }} "12"; "15"};
{\ar@{-->}_{\scriptscriptstyle\phi_{   \underline{kf}, g^* }} "15"; "10"};
{\ar@{-->}_{\scriptscriptstyle\phi_{g^*,h^*k^*   }} "15"; "16"};
{\ar@{-->}_{\scriptscriptstyle\phi_{   \underline{kf}, \underline{g^*k^*} }} "16"; "7"};
{\ar@{=>}_{  \widetilde{\Lambda}_3} (150,-5)*{}; (143,-20)*{}};
{\ar@{=>}^{  \widetilde{\Lambda}_6} (148,-25)*{}; (157,-38)*{}};
\endxy
 \end{equation}Note that $\widetilde{\Lambda}_3 $ in (\ref{eq:tPhi-3}) and $ \widetilde{\Lambda}_6$ in (\ref{eq:tPhi-6}) are the inverse of associators.
Apply the $3$-cocycle identity, the inverse version of (\ref{eq:Omega}) (the lower and upper boundaries are exchanged), to the above dotted diagram   with
$g_4=\underline{kf},g_3=g^*,g_2=h^*,g_1=k^*$ to get wavy isomorphisms in the following:
  \begin{equation}\label{eq:diagram-4}
      \xy 0;/r.22pc/:
 (0,0)*+{ \rho_f }="1";
(30,0)*+{ \rho_{g f} }="2";
(60,0)*+{ \rho_{\underline{gg^*}}}="3";
(90,0)*+{ \rho_{\underline{hg^*}}}="4";
(120,0)*+{ \rho_{\underline{hh^*}}}="5";
(150,0)*+{ \rho_{\underline{kh^*}}}="6";
(180,0)*+{\rho_{\underline{kk^*}}}="7";
 (110,-25)*+{\rho_{\underline{kg^*}} }="8"; (80,-25)*+{\rho_{ \underline{hf} } }="9";
  (145,-25)*+{\rho_{\underline{kf}}\rho_{g^* h^* }}="10";
  (40,-25)*+{  \rho_{kh}  \rho_{g f }  }="11";
  (60,-40)*+{  \rho_{\underline{kf} }  }="12";
  (10,-25)*+{  \rho_{kh }  }="13";  (20,-40)*+{  \rho_{\underline{kg} }  }="14";
  (130,-40)*+{  \rho_{\underline{kf} }\rho_{h^*k^*}  }="15"; (160,-40)*+{  \rho_{\underline{kf} }\rho_{\underline{g^*k^*}}  }="16";
   {\ar@{->}^{\scriptscriptstyle\phi_{g,f}} "1"; "2"};
{\ar@{->}^{\scriptscriptstyle\phi_{g f,g^*}} "2"; "3"};
{\ar@{->}^{\scriptscriptstyle\phi_{h, \underline{gg^*}}} "3"; "4"};
{\ar@{-->}^{\scriptscriptstyle\phi_{\underline{hg^*}, h^*}} "4"; "5"};
{\ar@{-->}^{\scriptscriptstyle\phi_{k,\underline{hh^*}}} "5"; "6"};
{\ar@{->}^{\scriptscriptstyle\phi_{\underline{kh^*},k^*}} "6"; "7"};
   {\ar@{=}^{}(20,-15)*^{} ;(30,-15)*^{}   };
  {\ar@{-->}^{\scriptscriptstyle\phi_{k, \underline{hg^*}    }} "4"; "8"};
{\ar@{-->}_{\scriptscriptstyle\phi_{\underline{kg^*},  h^*}} "8"; "6"};
 {\ar@{<-}_{ \scriptscriptstyle\phi_{ h,gf    }} "9"; "2"};
{\ar@{-->}^{\scriptscriptstyle\phi_{ \underline{hf},g^* }} "9"; "4"};
{\ar@{~>}_{\scriptscriptstyle\phi_{g^*,h^*    }} "12"; "10"};
{\ar@{~>}_{\scriptscriptstyle\phi_{\underline{kf},g^*h^* }} "10"; "6"};
{\ar@{->}^{\scriptscriptstyle\phi_{k ,h    }} "2"; "11"};
{\ar@{-->}_{\scriptscriptstyle\phi_{ k, \underline{hf} }} "9"; "12"};
{\ar@{->}_{\scriptscriptstyle\phi_{kh, g f   }} "11"; "12"};
{\ar@{-->}^{\scriptscriptstyle\phi_{ \underline{kf} ,g^*}} "12"; "8"};
{\ar@{->}_{\scriptscriptstyle\phi_{k,h    }} "1"; "13"};
{\ar@{->}_{\scriptscriptstyle\phi_{   g, f }} "13"; "11"};
{\ar@{->}_{\scriptscriptstyle\phi_{kh ,g   }} "13"; "14"};
{\ar@{->}_{\scriptscriptstyle\phi_{   \underline{kg}, f }} "14"; "12"};
{\ar@{->}_{\scriptscriptstyle\phi_{h^*,k^*   }} "12"; "15"};
{\ar@{~>}|-{\scriptscriptstyle\phi_{    g^*h^*,k^* }}"10";"16"};
{\ar@{->}_{\scriptscriptstyle\phi_{g^*,h^*k^*   }} "15"; "16"};
{\ar@{->}_{\scriptscriptstyle\phi_{   \underline{kf}, \underline{g^*k^*} }} "16"; "7"};
{\ar@{=>}^{  \widetilde{\Lambda}_1} (116,-5)*{}; (116,-20)*{}};
{\ar@{=>} (93,-9)*{}; (93,-25)*{}};{\ar@{=>}^{\widehat{\Lambda}} (115,-25)*{}; (135,-25)*{}};
\endxy
 \end{equation}where $\widehat{\Lambda}$ is the inverse of a whiskered associator.
Note that the commutative cube in (\ref{eq:Omega-cube}) implies the following identity.
$$\label{eq:Omega-2}\xy 0;/r.20pc/:
   (-10,15 )*+{\bullet }="1";
(40, 15)*+{ \bullet }="2";
( 40,55)*+{\bullet }="3";
(55,0)*+{\bullet  }="4";
(-10,55 )*+{ \bullet  }="5";(10,0 )*+{ \bullet  }="7";
(55,40)*+{\bullet }="8";
(50,20)*+{\scriptscriptstyle A_2^{'-1} }="08";
(50,9)*+{\scriptscriptstyle \phi_b   }="08";
 {\ar@{->}^{\phi_{ g_4,g_3g_2  } }  "1";"2" };
{\ar@{<-}_{\scriptscriptstyle\phi_a } "3";"2" };
{\ar@{<-}_{ \phi_{g_3g_2,g_1}} "5";"1" };
{\ar@{->}^{ } "4";"2" };
 {\ar@{->} "8";"3"_{\phi_{g_4 g_3 , g_2 g_1 } } };
{\ar@{->}^{\phi_{   g_4, g_3g_2 g_1} } "5";"3" };
{\ar@{<-}^{\phi_{ g_2, g_1} } "8";"4" };
  {\ar@{=>}^{ A_2  } (0,45)*{};(35,25)*{}} ;
 {\ar@{=>}_{  } (43, 15)*{};(52, 40)*{}} ;
 {\ar@{->}^{\phi_{  g_3,g_2 } } "7";"1" };
 {\ar@{<-}^{\phi_{  g_4, g_3  } } "4";"7" };
  {\ar@{=>}^{   A_3   } (10,12)*{};(35,3)*{}} ;
 \endxy \xy0;/r.22pc/:
  (3, 0)*+{ }="1";
(13,0)*+{ }="2";
{\ar@{=}^{ }   "1"+(0,20);"2"+(0,20)};
   \endxy
     \xy 0;/r.20pc/:
   (-10,15 )*+{ \bullet}="1";
( 15,0)*+{\bullet }="3";
(55,0)*+{ \bullet }="4";
(-10,55 )*+{ \bullet  }="5";
(  15,40)*+{ \bullet }="7";
(55,40)*+{\bullet }="8";
   (35,55 )*+{ \bullet}="9";
{\ar@{->}^{\phi_{  g_3,g_2 } } "3";"1" };
{\ar@{<-} "5";"1"_{ \phi_{g_3g_2,g_1}} };
 {\ar@{<-}^{\scriptscriptstyle\phi_{  g_2,g_1 } } "7";"3" };
{\ar@{->}_{\phi_{  g_4, g_3 } } "3";"4" };
{\ar@{<-}^{\scriptscriptstyle\phi_{ g_2, g_1} } "8";"4" };
 {\ar@{->}^{\phi_c } "7";"5" };
 {\ar@{->}_{\phi_{g_4, g_3}  } "7";"8" };
 {\ar@{=>}^{ A_1^{-1}  } (-5,15)*{};(10,  35  )*{}} ; {\ar@{<=}_{ A_1' } (42, 43)*{};(12,50)*{}} ;
 {\ar@{=}|-{     } (43 ,20)*{};(31, 20)*{}} ; {\ar@{<-}^{\phi_{ g_4g_3,g_2 g_1} } "9";"8" };{\ar@{->}^{\phi_{ g_4,g_3g_2 g_1 } } "5" ;"9" };
 \endxy
$$ where $\phi_a:=\phi_{ g_4 g_3 g_2,g_1}, \phi_b:=\phi_{ g_4 g_3, g_2}, \phi_c:=\phi_{ g_3, g_2 g_1}$.
The left-hand side is the back, bottom and right (this $2$-isomorphism is inverted) faces of the cube in (\ref{eq:Omega-cube}), while the right-hand side is the left (this $2$-isomorphism is inverted), top and front faces of the cube.
Apply this identity to the dotted-wavy diagram in (\ref{eq:diagram-4}) with $g_4=k$, $g_3=\underline{hf}$, $g_2=g^*$,$g_1=h^*$  to get wavy isomorphisms in the
following:
 \begin{equation}
      \xy 0;/r.22pc/:
 (0,0)*+{ \rho_f }="1";
(30,0)*+{ \rho_{g f} }="2";
(60,0)*+{ \rho_{\underline{gg^*}}}="3";
(90,0)*+{ \rho_{\underline{hg^*}}}="4";
(120,0)*+{ \rho_{\underline{hh^*}}}="5";
(150,0)*+{ \rho_{\underline{kh^*}}}="6";
(180,0)*+{\rho_{\underline{kk^*}}}="7";
 (110,-25)*+{\rho_{ \underline{hf} }\rho_{g^* h^* } }="8"; (80,-25)*+{\rho_{ \underline{hf} } }="9";
  (140,-25)*+{\rho_{\underline{kf}}\rho_{g^* h^* }}="10";
  (40,-25)*+{  \rho_{kh}  \rho_{g f }  }="11";
  (60,-40)*+{  \rho_{\underline{kf} }  }="12";
  (10,-25)*+{  \rho_{kh }  }="13";  (20,-40)*+{  \rho_{\underline{kg} }  }="14";
  (130,-40)*+{  \rho_{\underline{kf} }\rho_{h^*k^*}  }="15"; (160,-40)*+{  \rho_{\underline{kf} }\rho_{\underline{g^*k^*}}  }="16";
   {\ar@{-->}^{\phi_{g,f}} "1"; "2"};
{\ar@{->}^{\scriptscriptstyle\phi_{g f,g^*}} "2"; "3"};
{\ar@{->}^{\scriptscriptstyle\phi_{h, \underline{gg^*}}} "3"; "4"};
{\ar@{->}^{\scriptscriptstyle\phi_{\underline{hg^*}, h^*}} "4"; "5"};
{\ar@{->}^{\scriptscriptstyle\phi_{k,\underline{hh^*}}} "5"; "6"};
{\ar@{->}^{\scriptscriptstyle\phi_{\underline{kh^*},k^*}} "6"; "7"};
   {\ar@{=}^{}(20,-15)*^{} ;(30,-15)*^{}   };{\ar@{=}^{}(80,-30)*^{} ;(100,-30)*^{}   };
  {\ar@{~>}^{\scriptscriptstyle\phi_{  g^* ,  h^*   }} "9"; "8"};
{\ar@{~>}_{\scriptscriptstyle\phi_{ \underline{hf}, g^* h^*}} "8"; "5"};
 {\ar@{<--}_{\scriptscriptstyle\phi_{  h ,gf   }} "9"; "2"};
{\ar@{->}^{\scriptscriptstyle\phi_{ \underline{hf},g^* }} "9"; "4"};
{\ar@{->}_{\scriptscriptstyle\phi_{g^*,h^*    }} "12"; "10"};
{\ar@{->}_{\scriptscriptstyle\phi_{\underline{kf},g^*h^* }} "10"; "6"};
{\ar@{-->}^{\scriptscriptstyle\phi_{k ,h    }} "2"; "11"};
{\ar@{-->}_{\scriptscriptstyle\phi_{ k, \underline{hf} }} "9"; "12"};
{\ar@{-->}_{\scriptscriptstyle\phi_{kh, g f   }} "11"; "12"};
{\ar@{~>}^{\scriptscriptstyle\phi_{ k,\underline{hf}  }} "8"; "10"};
{\ar@{-->}_{\scriptscriptstyle\phi_{k,h    }} "1"; "13"};
{\ar@{-->}^{\scriptscriptstyle\phi_{   g, f }} "13"; "11"};
{\ar@{-->}_{\scriptscriptstyle\phi_{kh ,g   }} "13"; "14"};
{\ar@{-->}_{\scriptscriptstyle\phi_{   \underline{kg}, f }} "14"; "12"};
{\ar@{->}_{\scriptscriptstyle\phi_{h^*,k^*   }} "12"; "15"};
{\ar@{->}^{\scriptscriptstyle\phi_{    g^*h^*,k^* }}"10";"16"};
{\ar@{->}_{\scriptscriptstyle\phi_{g^*,h^*k^*   }} "15"; "16"};
{\ar@{->}_{\scriptscriptstyle\phi_{   \underline{kf}, \underline{g^*k^*} }} "16"; "7"};
 {\ar@{=>}^{  \widetilde{\Lambda}_5} (30,-27)*{}; (30,-39)*{}};
  {\ar@{=>} (60,-20)*{}; (48,-28)*{}};
\endxy
 \end{equation}
Apply the $3$-cocycle identity (\ref{eq:Omega}) to the above dotted diagram   with $g_4=k$, $g_3=h$, $g_2=g $, $g_1=f$  to get wavy isomorphisms in the following  diagram $\widetilde{\mathscr D}_f^r:=$
 \begin{equation}\label{eq:Omega-000}
      \xy 0;/r.22pc/:
 (0,0)*+{ \rho_f }="1";
(30,0)*+{ \rho_{g f} }="2";
(60,0)*+{ \rho_{\underline{gg^*}}}="3";
(90,0)*+{ \rho_{\underline{hg^*}}}="4";
(120,0)*+{ \rho_{\underline{hh^*}}}="5";
(150,0)*+{ \rho_{\underline{kh^*}}}="6";
(180,0)*+{\rho_{\underline{kk^*}}}="7";
 (110,-25)*+{\rho_{ \underline{hf} }\rho_{g^* h^* } }="8"; (80,-25)*+{\rho_{ \underline{hf} } }="9";
  (140,-25)*+{\rho_{\underline{kf}}\rho_{g^* h^* }}="10";
  (40,-25)*+{  \rho_{ h g   }  }="11";
  (60,-40)*+{  \rho_{\underline{kf} }  }="12";
  (10,-25)*+{  \rho_{kh }  }="13";  (20,-40)*+{  \rho_{\underline{kg} }  }="14";
  (130,-40)*+{  \rho_{\underline{kf} }\rho_{h^*k^*}  }="15"; (160,-40)*+{  \rho_{\underline{kf} }\rho_{\underline{g^*k^*}}  }="16";
     {\ar@{->}^{\phi_{g,f}} "1"; "2"};
{\ar@{->}^{\phi_{g f,g^*}} "2"; "3"};
{\ar@{->}^{\phi_{h, \underline{gg^*}}} "3"; "4"};
{\ar@{->}^{\phi_{\underline{hg^*}, h^*}} "4"; "5"};
{\ar@{->}^{\phi_{k,\underline{hh^*}}} "5"; "6"};
{\ar@{->}^{\phi_{\underline{kh^*},k^*}} "6"; "7"};
     {\ar@{->}^{\scriptscriptstyle\phi_{  g^*  ,  h^*  }} "9"; "8"};
{\ar@{->}_{\scriptscriptstyle\phi_{ \underline{hf}, g^* h^*}} "8"; "5"};
 {\ar@{<-}_{\scriptscriptstyle\phi_{  h ,gf   }} "9"; "2"};
{\ar@{->}^{\scriptscriptstyle\phi_{ \underline{hf},g^* }} "9"; "4"};
{\ar@{->}_{\scriptscriptstyle\phi_{ g^*,  h^*    }} "12"; "10"};
{\ar@{->}_{\scriptscriptstyle\phi_{ \underline{hf},g^*h^* }} "10"; "6"};
{\ar@{~>}^{\scriptscriptstyle\phi_{ h,g    }} "1"; "11"};
{\ar@{->}_{\scriptscriptstyle\phi_{k,\underline{hf}  }} "9"; "12"};
{\ar@{~>}^{\scriptscriptstyle\phi_{ h g ,f   }} "11"; "9"};
{\ar@{->}^{\scriptscriptstyle\phi_{ k,\underline{hf} }} "8"; "10"};
{\ar@{->}_{\scriptscriptstyle\phi_{k,h    }} "1"; "13"};
{\ar@{~>}^{\scriptscriptstyle\phi_{   k,hg  }} "11";"14"};
{\ar@{->}_{\scriptscriptstyle\phi_{kh ,g   }} "13"; "14"};
{\ar@{->}_{\scriptscriptstyle\phi_{   \underline{kg}, f }} "14"; "12"};
{\ar@{->}_{\scriptscriptstyle\phi_{h^*,k^*   }} "12"; "15"};
{\ar@{->}^{\scriptscriptstyle\phi_{  g^*h^*,k^*}}"10";"16"};
{\ar@{->}_{\scriptscriptstyle\phi_{g^*,h^*k^*   }} "15"; "16"};
{\ar@{->}_{\scriptscriptstyle\phi_{   \underline{kf}, \underline{g^*k^*} }} "16"; "7"};{\ar@{=}^{}(80,-30)*^{} ;(100,-30)*^{}   };{\ar@{=>}_{   \Xi_2   }
(140,-29)*{};(130,-37)*{}} ;
 {\ar@{=>}_{ \Xi_1} (33,-25)*{}; (15,-25)*{}};
\endxy
 \end{equation}

With $f$ replaced by $1$, by using adjoint operations as in
 (\ref{eq:upper}),  the diagram $ {\mathscr D}_1^r $ corresponding to the upper half   is identically  changed to
the following  diagram $\widetilde{\mathscr D}_1^r:=$
 \begin{equation}\label{eq:Omega-00}
      \xy 0;/r.22pc/:
 (0,0)*+{ \rho_1 }="1";
(-30,0)*+{ \rho_{g 1} }="2";
(-60,0)*+{ \rho_{\underline{gg^*}}}="3";
(-90,0)*+{ \rho_{\underline{hg^*}}}="4";
(-120,0)*+{ \rho_{\underline{hh^*}}}="5";
(-150,0)*+{ \rho_{\underline{kh^*}}}="6";
(-180,0)*+{\rho_{\underline{kk^*}}}="7";
 (-110,-25)*+{\rho_{ \underline{h1} }\rho_{g^* h^* } }="8"; (-80,-25)*+{\rho_{ \underline{h1} } }="9";
  (-140,-25)*+{\rho_{\underline{k1}}\rho_{g^* h^* }}="10";
  (-40,-25)*+{  \rho_{ h g   }  }="11";
  (-60,-40)*+{  \rho_{\underline{k1} }  }="12";
  (-10,-25)*+{  \rho_{kh }  }="13";  (-20,-40)*+{  \rho_{\underline{kg} }  }="14";
  (-130,-40)*+{  \rho_{\underline{k1} }\rho_{h^*k^*}  }="15"; (-160,-40)*+{  \rho_{\underline{k1} }\rho_{\underline{g^*k^*}}  }="16";
     {\ar@{<-}_{\scriptscriptstyle\phi^{-1}_{g,1}} "1"; "2"};
{\ar@{<-}_{\scriptscriptstyle\phi^{-1}_{g 1,g^*}} "2"; "3"};
{\ar@{<-}_{\scriptscriptstyle\phi^{-1}_{h, \underline{gg^*}}} "3"; "4"};
{\ar@{<-}_{\scriptscriptstyle\phi^{-1}_{\underline{hg^*}, h^*}} "4"; "5"};
{\ar@{<-}_{\scriptscriptstyle\phi^{-1}_{k,\underline{hh^*}}} "5"; "6"};
{\ar@{<-}_{\scriptscriptstyle\phi^{-1}_{\underline{kh^*},k^*}} "6"; "7"};
     {\ar@{<-}_{\scriptscriptstyle\phi^{-1}_{  g^*  ,  h^*  }} "9"; "8"};
{\ar@{<-}_{\scriptscriptstyle\phi^{-1}_{ \underline{h1}, g^* h^*}} "8"; "5"};
 {\ar@{->}_{\scriptscriptstyle\phi^{-1}_{  h ,g1   }} "9"; "2"};
{\ar@{<-}_{\scriptscriptstyle\phi^{-1}_{ \underline{h1},g^* }} "9"; "4"};
{\ar@{<-}^{\scriptscriptstyle\phi^{-1}_{ g^*,  h^*    }} "12"; "10"};
{\ar@{<-}_{\scriptscriptstyle\phi^{-1}_{ \underline{h1},g^*h^* }} "10"; "6"};
{\ar@{<-}_{\scriptscriptstyle\phi^{-1}_{ h,g    }} "1"; "11"};
{\ar@{<-}|-{\scriptscriptstyle\phi^{-1}_{k,\underline{h1}  }} "9"; "12"};
{\ar@{<-}_{\scriptscriptstyle\phi^{-1}_{ h g ,1   }} "11"; "9"};
{\ar@{<-}_{\scriptscriptstyle\phi^{-1}_{ k,\underline{h1} }} "8"; "10"};
{\ar@{<-}^{\scriptscriptstyle\phi^{-1}_{k,h    }} "1"; "13"};
{\ar@{<-}|-{\scriptscriptstyle\phi^{-1}_{   k,hg  }} "11";"14"};
{\ar@{<-}^{\scriptscriptstyle\phi^{-1}_{kh ,g   }} "13"; "14"};
{\ar@{<-}^{\scriptscriptstyle\phi^{-1}_{   \underline{kg}, 1 }} "14"; "12"};
{\ar@{<-}^{\scriptscriptstyle\phi^{-1}_{h^*,k^*   }} "12"; "15"};
{\ar@{<-}_{\scriptscriptstyle\phi^{-1}_{  g^*h^*,k^*}}"10";"16"};
{\ar@{<-}^{\scriptscriptstyle\phi^{-1}_{g^*,h^*k^*   }} "15"; "16"};
{\ar@{<-}^{\scriptscriptstyle\phi^{-1}_{   \underline{k1}, \underline{g^*k^*} }} "16"; "7"};{\ar@{=}^{}(-80,-30)*^{} ;(-100,-30)*^{}   };{\ar@{=>}_{   \Xi_2^\dag   }
(-140,-27)*{};(-130,-37)*{}} ;
 {\ar@{=>}^{ \Xi_1^\dag} (-35,-25)*{}; (-14,-25)*{}};
\endxy,
 \end{equation} where $\Xi_j^\dag$ is the adjoint of $\Xi_j $, $j=1,2$.
Then the whole diagram $ {  \mathscr D}_1^r\xrightarrow{\chi}  { \mathscr D}_f^r$ in (\ref{eq:D-r}) is identically changed to
 $$
     \widetilde{  \mathscr D}_1^r\xrightarrow{\chi}\widetilde{ \mathscr D}_f^r,
 $$namely,
  \begin{equation}\label{eq:D-chi-D}
      \xy 0;/r.22pc/:
 (0,0)*+{ \rho_f }="1"; (10,0)*+{ \cdots\cdots }="01";
  (10,-25)*+{  \rho_{kh }  }="13";  (20,-40)*+{  \rho_{\underline{kg} }   }="14";
(40,-25)*+{  \rho_{ h g  }  }="11";(50,-25)*+{ \cdots \cdots }="011";
{\ar@{->}^{\phi_{ h,g    }} "1"; "11"};
{\ar@{->}_{\phi_{k,h    }} "1"; "13"};
{\ar@{->}^{\phi_{   k,hg  }} "11";"14"};
{\ar@{->}_{\phi_{kh ,g   }} "13"; "14"};
(-30,0)*+{ \rho_1}="10";(-40,0)*+{\cdots \cdots }="010";
  (-40,-25)*_{  \rho_{kh }  }="130";  (-50,-40)*+{ \rho_{\underline{kg} }}="140";
(-70,-25)*+{ \rho_{ h g  } }="110";(-80,-25)*+{ \cdots\cdots   }="0110";
{\ar@{<-}_{\phi^{-1}_{ h,g    }} "10"; "110"};
{\ar@{<-}^{\phi^{-1}_{k,h    }} "10"; "130"};
{\ar@{<-}_{\phi^{-1}_{   k,hg  }} "110";"140"};
{\ar@{<-}^{\phi^{-1}_{kh ,g   }} "130"; "140"};{\ar@{->}^{\chi} "10"; "1"};{\ar@{<=}^{   \Xi_1  } (15,-25)*{};(33,-25)*{}} ;
{\ar@{<=}_{   \Xi_1^\dag  } (-45,-25)*{};(-65,-25)*{}} ;
\endxy
 \end{equation}

 Note that (\ref{eq:Omega-000}) is exactly ${  \mathscr D}_f^l$ in (\ref{eq:diagram-1}) with two extra $2$-isomorphisms   $\Xi_1 $ and $\Xi_2$.
 But by definition, the $2$-isomorphisms $\Xi_1 $ and $\Xi_1^\dag$ are the associators (\ref{eq:diagonal}) corresponding to the $3$-isomorphisms in $\mathcal{C}$, which change
  $$
      \xy 0;/r.22pc/:
 (-15,0)*+{ {x}}="1";
(15,0)*+{ {x} }="2";
(-30,0)*+{ x}="3";
(-45,0)*+{ x}="5";
(-60,0)*+{ x}="7";
{\ar@{->}^{\rho_g} "3"; "1"};
{\ar@{->}|-{\rho_h} "5"; "3"};
{\ar@{->}|-{\rho_k} "7"; "5"};
 {\ar@{->}_{\rho_f} "1";"2"};
 {\ar@/_3.53pc/_{\rho_{\underline{kg} }} "7";"1"};
  {\ar@/_2.13pc/_{\scriptscriptstyle\rho_{hg }} "5";"1"};
   {\ar@/^3.53pc/^{\rho_{\underline{kg} }} "7";"1"};
  {\ar@/^2.13pc/^{\scriptscriptstyle\rho_{hg }} "5";"1"};
 {\ar@/^2.13pc/^{ {1_x}} "1";"2"};{\ar@{=>}^{   \chi   } (0,9)*{};(0,1)*{}} ;
 {\ar@{=>}_{\scriptscriptstyle  \phi_{   k,hg  }^{-1}  } (-45,11)*{};(-45,2)*{}} ; {\ar@{=>}_{\scriptscriptstyle \phi_{   k,hg  } } (-45,-2)*{};(-45,-11)*{}} ;{\ar@{=>}_{\scriptscriptstyle \phi_{    h,g  }^{-1}   } (-30,9)*{};(-30,1)*{}} ;{\ar@{=>}_{\scriptscriptstyle \phi_{    h,g  } } (-30,-1)*{};(-30,-9)*{}} ;
   \endxy
 \quad {\rm  to}
  \quad
      \xy 0;/r.22pc/:
 (-15,0)*+{ {x}}="1";
(15,0)*+{ {x} }="2";
(-30,0)*+{ x}="3";
(-45,0)*+{ x}="5";
(-60,0)*+{ x}="7";
{\ar@{->}^{\rho_g} "3"; "1"};
{\ar@{->}^{\rho_h} "5"; "3"};
{\ar@{->}^{\rho_k} "7"; "5"};
 {\ar@{->}_{\rho_f} "1";"2"};
 {\ar@/_3.53pc/_{\rho_{\underline{kg} }} "7";"1"};
  {\ar@/_2.13pc/_{\scriptscriptstyle\rho_{kh }} "7";"3"};
      {\ar@/^3.53pc/^{\rho_{\underline{kg} }} "7";"1"};
  {\ar@/^2.13pc/^{\scriptscriptstyle\rho_{kh }} "7";"3"};
  {\ar@/^2.13pc/^{ {1_x}} "1";"2"};
  {\ar@{=>}^{   \chi   } (0,9)*{};(0,1)*{}} ;
 {\ar@{=>}|-{      } (-30,11)*{};(-30,2)*{}} ;
 {\ar@{=>}^{  \scriptscriptstyle \phi_{   k h,g  }   } (-30,-2)*{};(-30,-11)*{}} ;{\ar@{=>}|-{      } (-45,9)*{};(-45,1)*{}} ;{\ar@{=>}^{\scriptscriptstyle  \phi_{   k,h   }    } (-45,-1)*{};(-45,-9)*{}} ;
   \endxy,
 $$
 and we have
   $$ \xy 0;/r.22pc/:
 (-15,0)*+{ {x}}="1";
 (-30,0)*+{ x}="3";
(-45,0)*+{ x}="5";
(-60,0)*+{ x}="7";
{\ar@{->}^{\rho_g} "3"; "1"};
{\ar@{->}^{\rho_h} "5"; "3"};
{\ar@{->}^{\rho_k} "7"; "5"};
  {\ar@/_2.03pc/_{\rho_{\underline{kg} }} "7";"1"};
  {\ar@/_1.33pc/|-{\rho_{hg }} "5";"1"};
   {\ar@/^2.03pc/^{\rho_{\underline{kg} }} "7";"1"};
  {\ar@/^1.33pc/|-{\rho_{hg }} "5";"1"};
 {\ar@{=>}|-{      } (-45,7)*{};(-45,2)*{}} ; {\ar@{=>}|-{      } (-45,-2)*{};(-45,-7)*{}} ;{\ar@{=>}|-{      } (-30,5)*{};(-30,1)*{}} ;{\ar@{=>}|-{      } (-30,-1)*{};(-30,-5)*{}} ;
   \endxy \xy0;/r.22pc/:
  (0,0)*+{ }="1";
(20,0)*+{ }="2";
{\ar@{=}^{ }   "1" ;"2" };
   \endxy
      \xy 0;/r.22pc/:(-60,0)*+{ x}="7";
 (-15,0)*+{ {x}}="1";
 {\ar@{->}^{\rho_{\underline{kg} }} "7"; "1"};
   \endxy,
 $$by cancellation (\ref{eq:cancel}).  So $\Xi_1 $ and $\Xi_1^\dag$ are cancelled. More precisely, as a $3$-isomorphism,   $\Xi_1^\dag \#_0\chi\#_0\Xi_1$ is
 $$
    (\Xi_1^\dag \#_0\chi)\#_1(\Xi_1\#_0\chi)=(\Xi_1^\dag \#_1 \Xi_1)\#_0\chi =1_{\rho_{\underline{kg}}}\#_0\chi,$$
and we have
  $$
   (\phi_{   k,hg  }^{-1}\#_0 \phi_{    h,g  }^{-1})\#_0(\phi_{    h,g  }\#_0\phi_{   k,hg  })\#_0\chi=(\phi_{   k,hg  }^{-1}\#_0 \phi_{    h,g  }^{-1})\#_0\chi\#_0(\phi_{    h,g  }\#_0\phi_{   k,hg  })  $$in the $2$-category $\mathcal{C}^+$, up to whiskering, by the interchange  law. Namely,  $\widetilde{  \mathscr D}_1^r\xrightarrow{\chi}\widetilde{ \mathscr D}_f^r$ in (\ref{eq:D-chi-D}) is identical to
 $$
      \xy 0;/r.22pc/:(20,-40)*+{  \rho_{\underline{kg} }   }="14";  (-50,-40)*+{ \rho_{\underline{kg} }}="140";
 (0,0)*+{ \rho_f }="1"; (10,0)*+{ \cdots\cdots }="01";
  (30,-40)*+{  \cdots \cdots }="014";
(40,-25)*+{  \rho_{ h g  }  }="11";(50,-25)*+{ \cdots \cdots }="011";
{\ar@{->}^{\phi_{ h,g    }} "1"; "11"};
 {\ar@{->}^{\phi_{   k,hg  }} "11";"14"};
(-30,0)*+{ \rho_1}="10";(-40,0)*+{\cdots \cdots }="010";
  (-60,-40)*+{\cdots \cdots   }="0140";
(-70,-25)*+{ \rho_{ h g  } }="110";(-80,-25)*+{ \cdots\cdots   }="0110";
{\ar@{<-}_{\phi^{-1}_{ h,g    }} "10"; "110"};
{\ar@{<-}_{\phi^{-1}_{   k,hg  }} "110";"140"};
{\ar@{->}^{\chi} "10"; "1"};
\endxy
 $$
 Similarly, the $2$-isomorphisms
 $\Xi_2  $
 (\ref{eq:Omega-000}) and $\Xi_2^\dag  $ in (\ref{eq:Omega-00}) are also cancelled. The resulting diagram is exactly the diagram
 $\mathscr D_1^l\xrightarrow{\chi} \mathscr D_f^l$ in (\ref{eq:D-l}). This completes the proof of Theorem \ref{thm:tr-cat}.

\end{document}